%% file: main.tex
\documentclass[11pt]{article}
\usepackage[margin=1.05in]{geometry}

\usepackage[english]{babel}
\usepackage{mathtools,amssymb,amsthm,mathrsfs}
\usepackage{xurl}
\usepackage{xcolor}
\numberwithin{equation}{section}
\usepackage[hidelinks,
  pdftitle={A Metric with Positive Sectional Curvature on S3 times S3},
  pdfauthor={Shengtao Guo, Ethan X. Fang, Junwei Lu}
]{hyperref}

\title{A Metric with Positive Sectional Curvature on \(S^3\times S^3\)}
\date{}
\author{
  Shengtao Guo \qquad
  Ethan X. Fang \qquad
  Junwei Lu\thanks{Department of Biostatistics, Harvard T.H. Chan School of
	Public Health. Email: \texttt{junweilu@hsph.harvard.edu}.}
}

\newtheorem{theorem}{Theorem}[section]
\newtheorem{proposition}[theorem]{Proposition}
\newtheorem{lemma}[theorem]{Lemma}

\theoremstyle{definition}

\theoremstyle{remark}
\newtheorem{remark}[theorem]{Remark}

\newcommand{\R}{\mathbb R}
\newcommand{\Q}{\mathbb Q}
\newcommand{\Z}{\mathbb Z}
\newcommand{\cZ}{\mathcal Z}
\newcommand{\cN}{\mathcal N}
\newcommand{\cL}{\mathcal L}
\newcommand{\cQ}{\mathcal Q}
\newcommand{\cR}{\mathcal R}
\newcommand{\Gr}{\operatorname{Gr}}
\newcommand{\Ad}{\operatorname{Ad}}
\newcommand{\Span}{\operatorname{span}}
\newcommand{\dist}{\operatorname{dist}}

\newcommand{\dd}{\mathrm d}
\newcommand{\eps}{\varepsilon}
\newcommand{\Sec}{\operatorname{sec}}

\newcommand{\mean}{\operatorname{mean}}
\newcommand{\Vtwo}{V^{(2)}}
\newcommand{\Vthree}{V^{(3)}}
\newcommand{\Vthreecorr}{\widetilde V^{(3)}}
\newcommand{\Vthreebase}{V^{(3)}_0}
\newcommand{\Tlift}{\widehat T_0}
\colorlet{coefficientcolor}{red}

\allowdisplaybreaks[2]

\begin{document}
\maketitle

\begin{abstract}
We construct a smooth Riemannian metric with strictly positive sectional
curvature on \(S^3\times S^3\). Since \(S^3\times S^3\) is
even-dimensional and has Euler characteristic zero, our result disproves
the positive-curvature case of Hopf's sign conjecture. The metric and the proof were discovered by the Odin Automatic AI Research Agent.
\end{abstract}

\input{introduction}
\input{reduction}
\input{tensors}
\input{background}
\input{quadratic}
\input{cubic}
\input{positivity}
\appendix
\input{calculations}

\bibliographystyle{alpha}
\bibliography{positive-curvature-s3xs3}
\end{document}

%% file: introduction.tex
\section{Introduction}

The relation between positive sectional curvature and topology is one of the
classical problems of global Riemannian geometry. Spheres and the compact
rank-one symmetric spaces provide the basic examples. The constructions of
positively curved homogeneous spaces and biquotients show that the class is
larger, but also illustrate how restrictive positivity on every tangent
two-plane can be; see Wallach~\cite{Wallach1972},
Eschenburg~\cite{Eschenburg1982}, and the survey~\cite{Ziller2007}.
 O'Neill's
formula~\cite{ONeill1966} and
Cheeger deformation~\cite{Cheeger1973} are fundamental tools for constructing positively curved metrics.

There are two famous conjectures attributed to Hopf about the relation between positive sectional curvature and topology. Hopf's product conjecture asserts that \(S^2\times S^2\) does not admit positive sectional curvature, which has been disproved by Brendle and Hung~\cite{BrendleHung2026}.
Hopf's sign conjecture has two parts: the positive-curvature case asserts that a closed
$2d$-dimensional manifold with positive sectional curvature has positive
Euler characteristic; and the negative-curvature case asserts that a closed $2d$-dimensional manifold with negative sectional curvature has Euler characteristic of sign $(-1)^d$.
The Gauss--Bonnet theorem settles the Euler characteristic assertion in dimension two,
and it also holds in dimension four. In dimension six, however, positivity
of sectional curvature does not imply pointwise positivity of the
integrand in the Gauss--Bonnet theorem: Geroch constructed an algebraic curvature tensor
with positive sectional curvatures and negative integrand~\cite{Geroch1976}.
This algebraic observation does not itself produce a closed manifold, but
explains why the four-dimensional argument cannot simply be repeated.

Symmetry has supplied strong positive results for the Hopf sign
conjecture. Wallach's work gives the assertion for positively curved
homogeneous spaces~\cite{Wallach1972}. Grove and
Searle~\cite{GroveSearle1994} classify positively curved manifolds of maximal
symmetry rank. P\"uttmann and Searle~\cite{PuttmannSearle2002} prove the Hopf
assertion under much weaker symmetry hypotheses; in particular, a closed
positively curved six-manifold admitting a nontrivial effective isometric
circle action has positive Euler characteristic. Kennard~\cite{Kennard2013}
establishes further Euler characteristic results under logarithmic symmetry
rank assumptions in higher dimensions. These theorems concern symmetries
of the positive metric, not merely group actions on the underlying manifold.

In dimension six, the classical simply connected positively curved examples
are \(S^6\), \(\mathbb{CP}^3\), the Wallach flag manifold
\(SU(3)/T^2\), and the Eschenburg biquotient \(SU(3)//T^2\)
\cite{Wallach1972,Eschenburg1992}. Their Euler characteristics are,
respectively, \(2,4,6,6\).
For closed, simply connected six-manifolds with positive sectional curvature
and an effective isometric \(SU(2)\)- or \(SO(3)\)-action,
Liu~\cite[Theorems~1.1--1.3]{Liu2024} proves that the Euler characteristic is one of the values
arising from these examples and obtains equivariant classifications under
additional hypotheses on the orbits. The circle-action result of
P\"uttmann and Searle already implies that a positively curved metric on
\(S^3\times S^3\) must have a finite isometry group, since its Euler
characteristic is zero. Any construction
on this product must ultimately break the continuous symmetries that make
its background metric tractable.

The manifold \(S^3\times S^3\) is therefore a natural test case for
Hopf's sign conjecture and the question of positive curvature on products.
Since \(\chi(S^3\times S^3)=\chi(S^3)\chi(S^3)=0\), the metric constructed
here disproves the positive-curvature case of Hopf's sign conjecture.
It also gives another example of a positively curved metric on a product
of two spheres, following the examples on \(S^2\times S^2\)
\cite{BrendleHung2026} and \(S^2\times S^3\) \cite{GuoFangLu2026}.
The most straightforward metric on \(S^3\times S^3\) is the round product
metric, which is nonnegatively curved, but
all mixed two-planes are flat. A more useful background comes from the
normal homogeneous presentation \(SU(2)^3/\Delta SU(2)\).
Spatzier and Strake~\cite[Remark~2.4]{SpatzierStrake1990} observed that this
metric has rank one in the sense of parallel Jacobi fields, although it
still contains totally geodesic flat tori; their first-order rigidity
results give further context. Weaker curvature conditions also distinguish
this product from the round product geometry. For example,
Dom\'inguez-V\'azquez, Gonz\'alez-\'Alvaro, and
Mouill\'e~\cite[Theorem~E and Remark~1.2]{DominguezGonzalezMouille2023}
discuss a construction of Wilking on \(S^3\times S^3\) with positive
second intermediate Ricci curvature.
That condition requires positivity of certain sums of two sectional
curvatures, rather than positivity of every sectional curvature.
The flat-torus obstruction forces a nonnegative first curvature variation
to vanish on each totally geodesic flat torus, so a deformation of such a
background toward positive sectional curvature on $S^3\times S^3$ requires
control of higher-order terms.
Berger's first-order rigidity for product metrics is recalled in
\cite[Section~1.1]{BourguignonDeschampsSentenac1972}.
Bourguignon, Deschamps, and Sentenac developed a higher-order theory for
products without infinitesimal isometries
\cite[Section~7]{BourguignonDeschampsSentenac1972}; they explicitly explain
why their theorem does not extend to metrics with infinitesimal
isometries~\cite[Section~1.2]{BourguignonDeschampsSentenac1973}.
Strake's linearization of the Gauss equation
\cite[Lemma~4.1 and Proposition~4.3]{Strake1987} shows why a totally
geodesic flat torus cannot be made positively curved everywhere at first
order: a nonnegative first curvature variation must have zero integral
and hence vanish identically. The task is to control the higher-order
terms while preserving positivity on nearby planes.

Brendle and Hung~\cite{BrendleHung2026} address this difficulty on
\(S^2\times S^2\) by a third-order deformation of a Cheeger--M\"uter
metric. They first minimize curvature in directions normal to the
background zero set in the manifold of tangent two-planes. This
minimization incorporates the potentially negative contribution from moving
the plane as the metric changes. They then arrange a nonnegative
second-order coefficient which vanishes only on the tangent planes of one
flat torus. A third-order coefficient with nonzero average, followed by a
conformal correction obtained from a Poisson equation on the torus,
removes this last degeneracy. Their use of the full manifold of tangent
two-planes also treats points at which fixed-base plane coordinates become
degenerate. The higher-order strategy and this minimization framework are
the starting point of the present paper.
The construction on \(S^2\times S^3\) in~\cite{GuoFangLu2026} uses a
different feature of the geometry. It keeps a four-dimensional base
nonnegatively curved, passes to a circle bundle, and perturbs the connection
metric by a global symmetric tensor that rotates under the circle action.
Differentiation in the short fiber direction gives a positive second-order
contribution through the Gauss equation. An estimate distinguishing
horizontal and vertical plane variations shows that this contribution
dominates the loss from minimizing over nearby planes. This provides
positivity at second order. Our construction is directly on
\(S^3\times S^3\); it neither lifts that positive metric nor uses the
existence theorem on either sphere product as a hypothesis.
Our main result is the following.

\begin{theorem}\label{thm:main}
The smooth manifold \(S^3\times S^3\) admits a smooth Riemannian metric with
strictly positive sectional curvature.
\end{theorem}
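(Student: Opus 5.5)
We follow the higher-order strategy of Brendle--Hung, adapted to the normal homogeneous background \(g_0\) on \(SU(2)^3/\Delta SU(2)\cong S^3\times S^3\). The first step is a precise description of the zero set of \(g_0\). We work on the Grassmann bundle \(\Gr_2(TM)\) of tangent two-planes and let \(\cZ\subset\Gr_2(TM)\) be the set of planes with \(\Sec_{g_0}=0\). By the normal homogeneous formula, a horizontal plane \(\Span\{X,Y\}\) in \(\mathfrak{su}(2)^3\) (the orthogonal complement of the diagonal) is flat exactly when \([X,Y]=0\). We will parametrize \(\cZ\) explicitly in terms of pairs of commuting horizontal vectors; these are the tangent planes of the totally geodesic flat tori. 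We will also show that \(\Sec_{g_0}\) vanishes to exactly second order transversally to \(\cZ\), through a normal Hessian bound of the form \(\Sec_{g_0}(\sigma)\ge c\,\dist(\sigma,\cZ)^2\).

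We then take a family \(g_t=g_0+t h_1+t^2h_2+t^3h_3\) and expand the minimum of \(\Sec_{g_t}\) near \(\cZ\). Following the minimization scheme, for \(\sigma_0\in\cZ\) we minimize \(\Sec_{g_t}\) over planes \(\sigma=\sigma_0+t\xi+\dots\). The first-order coefficient is the linearized curvature \(\dot{\Sec}(\sigma_0)\). Strake's lemma forces this to vanish on each flat torus if it is to be nonnegative, so we choose \(h_1\) with \(\dot{\Sec}\equiv0\) on \(\cZ\) to first order; for instance, \(h_1\) can be built from a left-invariant tensor twisted by a non-invariant function. The second-order coefficient is then
\[
Q_2(\sigma_0)=\ddot{\Sec}(\sigma_0)-\sup_\xi\bigl(\text{quadratic loss from moving the plane}\bigr),
\]
where the supremum uses the normal Hessian inverse from the first step. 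We choose \(h_1,h_2\) so that \(Q_2\ge0\) on \(\cZ\), with equality only on the planes tangent to a small family of flat tori, ideally a single torus \(T_0\).

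At third order we add \(h_3\) so that the cubic coefficient restricted to the tangent planes of \(T_0\) has positive average. We then remove the oscillating part by a conformal factor \(e^{2t^3u}\), where \(u\) solves a Poisson equation \(\Delta_{T_0}u=\) (cubic coefficient minus its mean), mirroring Brendle--Hung. After this correction the cubic coefficient is a positive constant on \(T_0\).

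The final step is a uniform compactness argument: \(\Sec_{g_t}>0\) for small \(t>0\). We treat three regions. Away from \(\cZ\), positivity holds by continuity. Near \(\cZ\) but away from \(T_0\), it holds at order \(t^2\) using \(Q_2>0\). Near \(T_0\), it holds at order \(t^3\), with the normal Hessian and \(Q_2\) dominating the mixed terms through an interpolation of the form \(t^2 a+t^3 b+\dots\ge0\). This last step requires the estimates to be uniform in tubular neighborhoods of size about \(t^{1/2}\) and \(t\).

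We expect the main obstacle to be the second-order step. It requires finding explicit \(h_1,h_2\), compatible with the \(SU(2)^3/\Delta\) structure but breaking enough symmetry (which the Püttmann--Searle theorem forces), such that \(Q_2\) is nonnegative on all of \(\cZ\) and degenerates only on an isolated torus. The first thing we would try is to take \(h\) in the form \(\mathrm{sym}(\alpha\otimes\beta)\), with one-forms built from the three factor projections and weighted by height functions on \(S^3\). We would then reduce \(Q_2\) on \(\cZ\) to a finite-dimensional quadratic form in the Lie algebra data and check its semidefiniteness by explicit computation.
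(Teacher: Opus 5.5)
You follow the same Brendle--Hung scheme as the paper, but your third-order step contains a genuine error. A term $\tau^3h_3$ changes none of $F_0,F_1,F_2$, nor the first displacement $\eta_1$. So by \eqref{eq:cubic-reduction} it only adds $\cL h_3$ to the reduced cubic coefficient on $\Tlift$. Because $T_0$ is flat and totally geodesic, $\cL h_3$ is the first variation of the intrinsic Gaussian curvature of $T_0$. Gauss--Bonnet then gives $\int_{T_0}\cL h_3\,\dd A_{g_0}=0$, which is the same Strake obstruction you use at first order. Hence $h_3$ can only remove oscillation, which is exactly the role of the conformal Poisson term; it can never produce a positive average.

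The positive mean has to be built into the first- and second-order tensors. In the paper it is the $\delta^2\mu$ coefficient, coming from two copies of $B$ and one of $C$ together with $K_2,K_3$, and its mean is $\frac{512}{375}\eps^3$. One must then show that every other cubic monomial in the amplitudes has zero mean. Those odd in $B$ vanish by the reflection \eqref{eq:reflection}, those without $B$ by Gauss--Bonnet, and those involving $O$, $K_1$ or $H_0$ by antipodal parities. This is what ensures that neither the large amplitude $c_*$ nor the choice of $K_1$ destroys the mean.

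The step you call the main obstacle is the core of the proof. Your plan leaves it open and misses a constraint it must satisfy. The tensors needed for the cubic mean interact with the torus-selecting tensors at second order. A small term $\eps\ell(U)$ that is linear in the transverse displacement destroys any bound $c|U|^2$ arbitrarily close to $\Tlift$. So each mixed second-order term must be corrected so that it vanishes together with its first transverse derivative on $\Tlift$. In the paper these corrections are $K_1,K_2,K_3$, with $K_1$ constructed smoothly across $p=\pm q$, and only then are the amplitudes $\eps$ and $c_*$ fixed.

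You also take the normal homogeneous metric itself as background. Its zero set contains an $\R P^2$ of flat planes at every point, so $\cZ$ is eight-dimensional, and you give no tensors that isolate a single torus there. The paper instead applies a further left Cheeger deformation with $t=3/10$, which cuts $\cZ$ down to two six-dimensional families. It then uses $H_0$ to make the type-II family positive without changing type I; this fails for the smaller values of $t$ tested in the appendix. Finally, it uses the height tensors $O,K_0$ with the verified bound $\cQ(O)+\cL K_0\geq\rho/250$. Without an explicit construction of this kind, together with its verification, your outline does not prove the theorem.
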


The construction begins with two successive diagonal Cheeger deformations that leave two
distinct families of totally geodesic flat tori. One consists of products
of the same great circle; the other consists of products of great circles
in orthogonal two-planes. A single perturbation must work on both families.
Moreover, their projections to \(S^3\times S^3\) have different
degeneracies, so the calculation cannot be confined to plane variations at
a fixed base point.
Our construction separates these requirements. An invariant tensor improves
the second-order coefficient on the second family while having no
quadratic interaction with the other perturbations on the first.
Two height functions then select one torus in the first family and give
a quadratic lower bound away from it. Smaller perturbations are chosen to
produce the required positive cubic average on this torus. Their mixed
second-order terms must vanish there together with their first transverse
derivatives; mere vanishing of the values would not preserve a quadratic
lower bound. We realize this cancellation by a smooth global tensor,
including across the diagonal and antidiagonal. Finally, parity ensures
that this correction and the tensor used on the second family do not
alter the relevant cubic average. The conformal Poisson correction then
completes the argument.

\vspace{3pt}

\noindent\textbf{The role of AI in this proof.}
Odin Automatic AI Research Agent was used to construct the  metric and the proof.

\vspace{3pt}

\noindent {\bf Paper organization.}
Section~\ref{sec:reduction} fixes the notation and curvature conventions.
Section~\ref{sec:tensors} gives the complete metric construction, explains
the constraints imposed at each stage, and distinguishes free parameters
from coefficients determined by cancellation. The geometric and curvature
proofs follow in Sections~\ref{sec:background}--\ref{sec:positivity}.
The appendix records the finite algebraic identities,
an alternative algebraic mixed correction, and details of coefficient
selection. The code is available at
{\def\UrlBreaks{\do\/\do\-}\url{https://github.com/shengtaoguo/s3xs3-curvature-calculations}}.

%% file: reduction.tex
\section{Preliminaries and notation}\label{sec:reduction}

\subsection{Quaternionic coordinates and curvature conventions}
Throughout, \(M=S^3\times S^3\), and
\(\cN=\Gr_2(TM)\) is the compact manifold of unoriented tangent
two-planes. A point of \(\cN\) records both a base point and a plane.
We represent \(S^3\) by the unit quaternions and write
\(p=p_0+p_1i+p_2j+p_3k\) and \(q=q_0+q_1i+q_2j+q_3k\).
For an imaginary quaternion \(a\), put
\(E_a=(pa,0)\) and \(F_a=(0,qa)\). The combined left Maurer--Cartan
coframe is denoted by \((\alpha_1,\alpha_2,\alpha_3,
\beta_1,\beta_2,\beta_3)\). The corresponding right coframes are obtained
by applying the adjoint actions:
\(\widehat\alpha=\Ad_p\alpha\) and \(\widehat\beta=\Ad_q\beta\).
Thus \([E_a,E_b]=E_{[a,b]}\), with \([a,b]=2a\times b\), and similarly
on the second factor. We use
\(\lambda\odot\mu=(\lambda\otimes\mu+\mu\otimes\lambda)/2\).
Unless stated otherwise, matrices of covariant tensors are written in this
combined left coframe.

Set \(r=p^{-1}q=(r_0,v)\), \(R=\Ad_r\), and
\(\Delta_\pm=\{(p,q):q=\pm p\}\). The symbol \(I\) denotes the identity
on the indicated space; in the three-by-three tensor blocks it means
\(I_3\). We identify the
standard basis \(e_0,e_1,e_2,e_3\) of \(\R^4\) with
\(1,i,j,k\). For \(L\in\Gr_2(\R^4)\), write \(C_L=L\cap S^3\).
All tensor products and contractions below are on \(M\).

For a Riemannian metric \(g\) with Levi--Civita connection \(\nabla\),
we use the curvature endomorphism
\begin{equation}\label{eq:curvature-convention}
 R^g(X,Y)Z=\nabla_X\nabla_YZ-\nabla_Y\nabla_XZ-\nabla_{[X,Y]}Z.
\end{equation}
Thus the numerator of sectional curvature is
\(g(R^g(X,Y)Y,X)\).
All series below use Taylor coefficients, so the coefficient of order
\(j\) is the \(j\)-th derivative divided by \(j!\).

\subsection{Normal minimization in base and plane directions}
We use the minimum-curvature framework of Brendle and
Hung~\cite[Lemma~2.1 and Proposition~2.9]{BrendleHung2026}.
The following two statements specify the coefficients used in the
construction. Their proofs, including the normalization comparison, are
given in Section~\ref{sec:normal-proofs}.

\begin{lemma}[Normal minimum]\label{lem:normal-minimum}
Let \(N\) be a compact manifold, and let \(Z\subset N\) be a compact
embedded submanifold. Suppose a smooth family of functions has expansion
\(F_\tau=F_0+\tau F_1+\tau^2F_2+\tau^3F_3+O(\tau^4)\),
where \(F_0\geq0\), \(F_0^{-1}(0)=Z\), the Hessian of \(F_0\) is
positive definite on the normal bundle of \(Z\), and \(F_1|_Z=0\).
Fix tubular coordinates \((z,\nu)\) near \(Z\). For sufficiently small
\(|\tau|\), minimization in \(\nu\) gives a smooth function
\[
 F_{\min}(\tau,z)=\tau^2q(z)+\tau^3r(z)+O(\tau^4),
\]
uniformly on \(Z\). If \(A_z=D_\nu^2F_0(z,0)\) and
\(b_z=D_\nu F_1(z,0)\), then the first minimizing displacement and
second coefficient are
\begin{equation}\label{eq:schur-reduction}
 \eta_1=-A_z^{-1}b_z,\qquad
 q=F_2|_Z-\tfrac12 b_z^TA_z^{-1}b_z.
\end{equation}
The cubic coefficient is
\begin{equation}\label{eq:cubic-reduction}
 r=F_3+D_\nu F_2[\eta_1]
   +\tfrac12D_\nu^2F_1[\eta_1,\eta_1]
   +\tfrac16D_\nu^3F_0[\eta_1,\eta_1,\eta_1],
\end{equation}
with all quantities evaluated at \((z,0)\). If \(q\geq0\) on \(Z\) and
\(r>0\) on \(q^{-1}(0)\), then \(F_\tau>0\) on \(N\) for every
sufficiently small \(\tau>0\).
\end{lemma}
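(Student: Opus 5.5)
The plan is to apply the implicit function theorem to the normal gradient and then Taylor expand the minimum value. In tubular coordinates \((z,\nu)\), with \(\nu\) in the normal fiber, Taylor expansion in \(\nu\) about \(\nu=0\) gives \(F_0(z,0)=0\), \(D_\nu F_0(z,0)=0\) (since \(Z\) consists of minima), and \(D_\nu^2F_0(z,0)=A_z>0\). Consider the map \(G(\tau,z,\nu)=D_\nu F_\tau(z,\nu)\). At \(\tau=0,\nu=0\) its \(\nu\)-derivative is \(A_z\), which is invertible uniformly on the compact set \(Z\). The implicit function theorem therefore gives a smooth critical point \(\nu^*(\tau,z)=\tau\eta_1+\tau^2\eta_2+O(\tau^3)\). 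By uniform convexity of \(F_\tau\) in \(\nu\) on a fixed small tubular neighborhood, this critical point is the unique minimizer there. Set \(F_{\min}(\tau,z)=F_\tau(z,\nu^*)\).

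To get the coefficients, differentiate \(G=0\) at order \(\tau\): this gives \(A_z\eta_1+b_z=0\), so \(\eta_1=-A_z^{-1}b_z\). Now expand \(F_\tau(z,\tau\eta_1+\tau^2\eta_2)\) in powers of \(\tau\), using \(F_1|_Z=0\) and the vanishing of \(F_0\) and \(D_\nu F_0\) on \(Z\).

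At order \(\tau^2\) the terms are \(F_2+b_z\cdot\eta_1+\tfrac12\eta_1^TA_z\eta_1\). Substituting \(\eta_1\) gives the Schur expression \eqref{eq:schur-reduction}.

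At order \(\tau^3\) the terms are
\[
F_3+D_\nu F_2[\eta_1]+\tfrac12D^2_\nu F_1[\eta_1,\eta_1]+\tfrac16D^3_\nu F_0[\eta_1^{3}],
\]
plus contributions involving \(\eta_2\). These come from \(D_\nu F_1[\eta_2]\) and \(D_\nu^2F_0[\eta_1,\eta_2]\), and their sum is \((b_z+A_z\eta_1)\cdot\eta_2=0\). This cancellation is the envelope principle, and it yields \eqref{eq:cubic-reduction}. The \(O(\tau^4)\) remainder is uniform in \(z\) by compactness and smoothness in \((\tau,z,\nu)\).

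For positivity I split \(N\) into three regions.

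\emph{Far region.} Away from a fixed tubular neighborhood \(U\) of \(Z\) we have \(F_0\geq c>0\), so \(F_\tau>0\) for small \(\tau\).

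\emph{Near region.} Inside \(U\) we have \(F_\tau(z,\nu)\geq F_{\min}(\tau,z)\). One point needs care: the convexity-based minimization is only valid for \(|\nu|\le\delta\), and for \(\delta\) fixed, \(F_\tau\) on the annulus \(|\nu|\ge\delta/2\) is bounded below by \(c\delta^2\) for small \(\tau\). It therefore remains to show \(F_{\min}=\tau^2q+\tau^3r+O(\tau^4)>0\) on \(Z\). Near the compact set \(q^{-1}(0)\) we have \(r\ge r_0>0\) on a neighborhood \(V\) of it, so there \(F_{\min}\ge\tau^3r_0-C\tau^4>0\). On \(Z\setminus V\) we have \(q\ge q_0>0\), so there \(F_{\min}\ge\tau^2q_0-C\tau^3>0\).

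The main obstacle is bookkeeping rather than ideas: the uniformity of the implicit function theorem and of the remainder estimates, and the splitting of the neighborhood so that the minimizer is global within each fiber.
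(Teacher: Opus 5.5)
Your proposal is correct and follows the paper's proof essentially step for step. The shared steps are: the implicit function theorem applied to \(D_\nu F_\tau\) gives a smooth critical section; convexity on a small fixed tube makes that section the fiberwise minimum; the \(\eta_2\)-terms cancel through the factor \(A_z\eta_1+b_z\); and positivity comes from splitting \(Z\) into a neighborhood of \(q^{-1}(0)\), where \(r\) dominates, and its complement, where \(q\) dominates, with the positive minimum of \(F_0\) handling everything outside the tube. Your separate treatment of the annulus \(|\nu|\ge\delta/2\) is just a more explicit version of the paper's ``small fixed tube plus compact complement'' step.
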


\begin{lemma}[Change of normal section]\label{lem:change-section}
Under the hypotheses of Lemma~\ref{lem:normal-minimum}, the second reduced
coefficient is independent of the tubular choice. Two first minimizing
displacements differ by a tangent vector to \(Z\). At a point where
\(q=0\) and \(\dd q=0\) along \(Z\), they give the same cubic
coefficient. Changing the second displacement also does not change that
coefficient.
\end{lemma}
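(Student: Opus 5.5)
We compare two tubular coordinate systems near \(Z\) directly. Let \((z,\nu)\) and \((z',\nu')\) be two tubular coordinates. They are related by a smooth diffeomorphism fixing \(Z\) pointwise up to reparametrization, \(z'=\phi(z,\nu)\), \(\nu'=\psi(z,\nu)\), with \(\psi(z,0)=0\). The minimized function \(F_{\min}(\tau,\cdot)\), viewed as the function \(m_\tau(x)=\min\{F_\tau(y): y\text{ in the normal slice through }x\}\), depends on the slices. Positivity of \(F_\tau\), however, does not. Rather than compare minima, we therefore compare the formal expansions produced by Lemma~\ref{lem:normal-minimum}.

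\textbf{Second coefficient.} The value \(q(z)\) is characterized intrinsically: it is the minimum over \(\eta\in T_zN\) of
\[
F_2(z)+D F_1(z)[\eta]+\tfrac12D^2F_0(z)[\eta,\eta],
\]
where \(\eta\) ranges over the full tangent space at \(z\in Z\). The following facts make this well defined.
\begin{itemize}
\item Since \(F_0\) vanishes to second order along \(Z\), its Hessian at \(z\) is a well-defined form on \(T_zN\) that vanishes on \(T_zZ\).
\item Since \(F_1|_Z=0\), its differential at \(z\) is well defined and vanishes on \(T_zZ\).
\end{itemize}
The quadratic is therefore invariant under \(\eta\mapsto\eta+t\) for \(t\in T_zZ\), so its minimum over \(T_zN\) equals its minimum over any complement of \(T_zZ\). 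That minimum is \(F_2-\tfrac12 b^TA^{-1}b\) in any normal chart. It follows that \(q\) is independent of the tubular choice, and that the minimizers \(\eta_1\) in two charts are both minimizers of the same quadratic on \(T_zN\). Since the quadratic is nondegenerate modulo \(T_zZ\), any two minimizers differ by an element of \(T_zZ\).

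\textbf{Cubic coefficient.} Write the cubic candidate \(r\) from \eqref{eq:cubic-reduction} as the \(\tau^3\)-coefficient of \(F_\tau\) evaluated along a curve \(y(\tau)\) with \(y(0)=z\) and \(\dot y(0)=\eta_1\). Here the second-order term of \(y\) contributes only through the first-order stationarity condition \(DF_1[\cdot]+D^2F_0[\eta_1,\cdot]=0\) on normal directions. This condition holds on all of \(T_zN\), because both terms vanish on \(T_zZ\). This shows that the second displacement drops out, which is the last assertion of the lemma.

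Now replace \(\eta_1\) by \(\eta_1+t\) with \(t\in T_zZ\). Take a curve \(c(\tau)\subset Z\) tangent to \(t\), and consider the shifted curve \(y'(\tau)\) centred at \(c(\tau)\) in place of \(z\). To third order this is equivalent to moving the base point of the minimized expansion along \(Z\). The \(\tau^3\)-coefficient of \(F_\tau(y'(\tau))\) then equals \(r(z)\) plus \(\dd q(z)[t]\) plus terms that vanish by stationarity. The base-point shift \(\tau t\) multiplies the \(\tau^2\)-coefficient \(q\) and so contributes \(\tau^3\,\dd q[t]\). At points where \(q=0\) and \(\dd q=0\) along \(Z\), the two cubic coefficients therefore agree.

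\textbf{Main obstacle.} The delicate step is the bookkeeping in this base-point shift. The curve adapted to one chart is not the curve adapted to the other, so the difference must be accounted for by a Taylor expansion in which every cross term reduces, via \(F_0=DF_0=0\), \(F_1|_Z=0\), and first-order stationarity, to either \(\dd q[t]\) or \(q\) times something. I would carry this out by writing the difference of the two composite curves as \(\tau t+\tau^2 w(\tau)\) and expanding \(F_\tau\) about \(y(\tau)\) to order \(\tau^3\).
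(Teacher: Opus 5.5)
Your proposal is correct and is essentially the paper's argument: because the Hessian of \(F_0\) has kernel \(T_zZ\) and \(\dd F_1\) annihilates \(T_zZ\), the Schur formula determines \(q\), and \(\eta_1\) modulo \(T_zZ\), independently of the tubular choice, while shifting the footpoint to \(z+\tau t+O(\tau^2)\) changes the cubic coefficient by \(\dd q_z[t]\), and the second displacement cancels by stationarity. The bookkeeping you flag as the main obstacle is already handled by your own observation that stationarity holds on all of \(T_zN\): the \(O(\tau^2)\) discrepancy between the two curves therefore contributes nothing at order \(\tau^3\).
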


For a nonnegatively curved background \(g_0\) whose zero set \(\cZ\) satisfies Lemma~\ref{lem:normal-minimum}, apply the lemma with \(N=\cN=\Gr_2(TM)\) and
\(F_\tau(\sigma)=\Sec_{g_0+\tau h+\tau^2k}(\sigma)\).
For a smooth symmetric tensor \(h\), write
\(F_{1,h}(\sigma)=[\tau]\Sec_{g_0+\tau h}(\sigma)\) on \(\cN\), and define \(\cL h=[\tau]\Sec_{g_0+\tau h}|_{\cZ}\).
When \(h\in\ker\cL\), write \(\cQ(h)\) for the second coefficient after normal minimization
with \(k=0\). This is quadratic in \(h\); we use the polarization
\(\cQ(h_1,h_2)=\tfrac12\bigl( \cQ(h_1+h_2)-\cQ(h_1)-\cQ(h_2)\bigr), \quad h_1,h_2\in\ker\cL\).
For the path with both tensors, the
normal minimum is
\begin{equation}\label{eq:metric-reduction}
 \tau^2\bigl(\cQ(h)+\cL k\bigr)
 +\tau^3\cR(h,k)+O(\tau^4).
\end{equation}
We fix one system of tubular coordinates when \(\cR(h,k)\) is used away from a
critical zero of the second coefficient.

The operators \(\cL,\cQ,\cR\) take values in scalar
functions on \(\cZ\). In particular,
\(\cQ\) includes the Schur term in \eqref{eq:schur-reduction}.
For the final tensor pair \(H,K\), we denote these coefficients by
\(\Vtwo\) and \(\Vthree\).

Our normalization differs from that used in \cite{BrendleHung2026}. The global curvature function in
\cite[Proposition~5.3]{BrendleHung2026} uses the fixed background Gram
determinant. Here \(F_\tau\) is sectional curvature and uses the perturbed
Gram determinant. For a fixed background and metric path, the normal
minima of the two normalized functions have the same reduced second
coefficient on \(\cZ\) when \(\cL h=0\), and the same reduced third coefficient where that
second coefficient vanishes. Independence of the normal coordinates also
requires its differential to vanish, as in Lemma~\ref{lem:change-section}.
For an unnormalized curvature numerator one must additionally divide by
the background determinant; the factors \(D_{\mathrm I}\) and
\(D_{\mathrm{II}}\) in Section~\ref{sec:tensors} make this conversion explicit.

\subsection{Linearization on a flat torus}
On a totally geodesic flat torus with constant induced metric in
coordinates \(s,t\), put
\(X=\partial_s\), \(Y=\partial_t\). These vectors need not have unit
\(g_0\)-length. The totally geodesic restriction principle
\cite[Lemma~4.1]{Strake1987} identifies the ambient first variation
with the intrinsic one. Linearizing the Gauss equation gives
\begin{equation}\label{eq:torus-linearization}
 \cL k=\frac1D\left(
 \partial_s\partial_t k(X,Y)
 -\tfrac12\partial_s^2k(Y,Y)
 -\tfrac12\partial_t^2k(X,X)\right),
\end{equation}
where \(D=g_0(X,X)g_0(Y,Y)-g_0(X,Y)^2\).
\begin{remark}\label{rem:first-order-obstruction}
This is Strake's flat-torus obstruction
\cite[Lemma~4.1 and Proposition~4.3]{Strake1987} in our notation.
If \(g_0+\tau h+O(\tau^2)\) has positive sectional curvature for all
small positive \(\tau\), then \(\cL h\geq0\) on each flat torus.
The first ambient variation there equals the intrinsic Gaussian-curvature
variation, because the initial second fundamental form vanishes.
The Gauss--Bonnet theorem gives \(\int_T\cL h\,\dd A_{g_0}=0\), so
\(\cL h=0\) everywhere on \(T\). Thus the vanishing first-variation condition used
in \eqref{eq:metric-reduction} is forced.
\end{remark}

%% file: tensors.tex
\section{Construction of the metric}\label{sec:tensors}

We construct the metric in the following order. Section~\ref{sec:background-choice}
chooses a Cheeger-deformed background with two families of flat tori, and
Section~\ref{sec:H0-choice} constructs a tensor that improves the second
family without affecting the first. Section~\ref{sec:height-choice}
uses height functions to obtain a quadratic lower bound on the first
family, with equality only along one torus. Section~\ref{sec:BC-choice}
adds the tensors needed for a positive cubic mean and identifies their
second-order cancellation requirements; Section~\ref{sec:K1-choice}
constructs the remaining global mixed correction.
Section~\ref{sec:amplitude-choice} fixes the amplitudes so that the
quadratic lower bound survives. Finally, Section~\ref{sec:complete-metric}
uses a Poisson equation to turn the cubic mean into a positive constant
and states the complete metric. The choice of the principal numerical
coefficients in the construction is explained in Appendix~\ref{app:coefficient-choice}.

Recall from Section~\ref{sec:reduction} that \(\cL\) is the first
curvature coefficient on the background zero set, whereas \(\cQ\) and
\(\cR\) are the second and third coefficients after normal minimization.
Thus the second-order objective is \(\cQ(H)+\cL K\geq0\), with
zero set the tangent-plane lift of one flat torus; the third-order
objective is a positive average on that torus. We specify the metric
data here and prove the curvature assertions in
Sections~\ref{sec:background}--\ref{sec:positivity}.

\subsection{Choosing the background and its zero set}\label{sec:background-choice}

We first want a nonnegatively curved metric whose zero planes form smooth
compact families and whose curvature grows quadratically in directions
normal to those families.
To obtain the standard normal homogeneous metric, give the three
\(SU(2)\) factors in its quotient presentation the same bi-invariant
scale. Equivalently, start with the product metric \(3I_6\) and
perform the diagonal right Cheeger deformation with parameter \(1/3\).
With the convention that the acting group has metric \(t^{-1}Q\), where
\(i,j,k\) are \(Q\)-orthonormal, the result is
\begin{equation}\label{eq:intermediate}
 g_1=\begin{pmatrix}2I&-I\\-I&2I\end{pmatrix}.
\end{equation}
For unit bi-invariant scales, the standard quotient metric on
\(SU(2)^3/\Delta SU(2)\) is \(g_1/3\). Thus equality of the
three scales fixes the relative coefficients, while the overall factor
three is a convenient normalization.
A further diagonal left Cheeger deformation, initially with free
parameter \(t>0\), gives the cometric
\begin{equation}\label{eq:background-family}
 g_{0,t}^{-1}=
 \begin{pmatrix}
 (\frac23+t)I&\frac13I+tR\\
 \frac13I+tR^T&(\frac23+t)I
 \end{pmatrix}.
\end{equation}
The background now has one free parameter.
We choose \(t\) so that, for the resulting background, the height tensors satisfy
\eqref{eq:K0-matching} and~\eqref{eq:height-positivity-test}, to be specified later.
The other required checks are the separation identities
\eqref{eq:H0-radical}--\eqref{eq:H0-minus-positive}, the second-order
cancellations \eqref{eq:BB-cancel} and~\eqref{eq:BC-j}--\eqref{eq:BC-k},
and the positive cubic mean \eqref{eq:cubic-mean}. A feasible choice, satisfying all these conditions, is \(t=3/10\).
Substitution \(t=3/10\) in \eqref{eq:background-family} gives
\begin{equation}\label{eq:background-cometric}
 g_0^{-1}=\frac1{30}
 \begin{pmatrix}29I&10I+9R\\10I+9R^T&29I\end{pmatrix}.
\end{equation}

The first deformation makes the two axes of a surviving mixed plane
parallel. Denote their common unit imaginary quaternion axis by \(u\).
The second deformation imposes
\(Ru=u\) or \(Ru=-u\). These two alternatives give
\begin{align}
 \cZ_{\mathrm I}
 &=\{T_{(p,q)}(C_L\times C_L):p,q\in C_L,
                                  \ L\in\Gr_2(\R^4)\},
 \label{eq:plus-incidence}\\
 \cZ_{\mathrm{II}}
 &=\{T_{(p,q)}(C_L\times C_{L^\perp}):p\in C_L,
                  \ q\in C_{L^\perp},\ L\in\Gr_2(\R^4)\}.
 \label{eq:minus-incidence}
\end{align}
These are subsets of \(\cN\): the base point is part of each element.
Write \(\cZ=\cZ_{\mathrm I}\sqcup\cZ_{\mathrm{II}}\) and
\(\cZ_{\mathrm I}^{\mathrm{reg}}=\{((p,q),\sigma)\in\cZ_{\mathrm I}:
 p\ne\pm q\}\). We refer to the corresponding tori as type I and
type II.
To obtain the induced matrices, restrict
\eqref{eq:background-cometric} to the two common-axis directions and
invert those two-dimensional blocks. In round-unit-speed circle
coordinates \(s,t\), this gives
\begin{equation}\label{eq:torus-matrices}
 G_{\mathrm I}=\frac1{16}
 \begin{pmatrix}29&-19\\-19&29\end{pmatrix},\quad
 G_{\mathrm{II}}=\frac1{28}
 \begin{pmatrix}29&-1\\-1&29\end{pmatrix},\qquad
 D_{\mathrm I}=\frac{15}{8},\quad D_{\mathrm{II}}=\frac{15}{14}.
\end{equation}

In circle coordinates the letter \(t\) is an angular
variable; the Cheeger parameter has already been fixed.
Propositions~\ref{prop:background} and~\ref{prop:clean-zero} prove that
these are totally geodesic flat tori, that the two six-dimensional zero
families are smooth and disjoint, and that the full normal Hessian has
rank $8$.
On type II, one normal direction changes
the base point away from \(\langle p,q\rangle=0\). On type I over
\(p=\pm q\), two such directions are needed. The normal minimization
in Section~\ref{sec:reduction} includes these base directions together
with the plane directions.

\subsection{Separating the two families}\label{sec:H0-choice}

A large perturbation intended for one zero family could destroy the
second-order estimate on the other. We therefore seek a tensor \(H_0\)
with \(\cL H_0=0\) and the stronger properties
\[
 \cQ(H_0,h)=0\quad\text{on }\cZ_{\mathrm I}
       \text{ for every }h\in\ker\cL,
 \qquad \cQ(H_0)>0\quad\text{on }\cZ_{\mathrm{II}}.
\]
For \(P(v)=|v|^2I-vv^T\), set
\[
 H_0=r_0\begin{pmatrix}0&P(v)\\P(v)&0\end{pmatrix}.
\]
The formula reflects the two alternatives above. On type I, \(v\) is
parallel to the common axis, which \(P(v)\) annihilates. The invariant
metric path \(g_0+\tau H_0\) consequently leaves these tori flat and
totally geodesic. On type II, \(r_0=0\), but its derivative normal to
that hypersurface is nonzero. This produces the positive quadratic
contribution. Proposition~\ref{prop:H0-opening} proves
\eqref{eq:H0-radical} on type~I.

\subsection{Selecting one torus by height functions}\label{sec:height-choice}

Set \(L_0=\Span\{e_0,e_1\}\) and
\(T_0=C_{L_0}\times C_{L_0}\). Its tangent-plane lift is
\(\Tlift=\iota_0(T_0)\), where
\(\iota_0(p,q)=((p,q),T_{(p,q)}T_0)\).
For \(L\in\Gr_2(\R^4)\), let \(e_\perp,f_\perp\) be the orthogonal
projections of \(e_0,e_1\) to \(L^\perp\), and put
\(\rho(L)=|e_\perp|^2+|f_\perp|^2\).
This vanishes exactly at \(L_0\) and measures squared transverse distance
to \(\Tlift\) on \(\cZ_{\mathrm I}\). Using two orthogonal height
vectors selects this supporting plane.

For the first-order height tensor, take the ansatz
\[
 p_0\begin{pmatrix}I&a_0I\\a_0I&0\end{pmatrix}
 +q_1\begin{pmatrix}0&a_0I\\a_0I&I\end{pmatrix}.
\]
Its tangential entries on a flat torus are \(p_0(s),q_1(t)\), and
\(a_0(p_0(s)+q_1(t))\), so its first curvature variation is zero for every
\(a_0\). The value \(a_0=-2/7\), together with the correction below,
satisfies the cancellation equation \eqref{eq:K0-free-equation} and the
transverse positivity test \eqref{eq:height-positivity-test}. A reproducible
selection procedure is given in Appendix~\ref{app:coefficient-choice}.
The resulting tensor is
\begin{equation}\label{eq:odd-tensor}
 O=p_0\begin{pmatrix}I&-\frac27I\\-\frac27I&0\end{pmatrix}
   +q_1\begin{pmatrix}0&-\frac27I\\-\frac27I&I\end{pmatrix}.
\end{equation}
Write \(O=O_p+O_q\) for its two summands.

The correction must cancel the curvature on \(T_0\) and leave a
positive multiple of \(\rho\) elsewhere. Use the two-dimensional ansatz
\[
 K_0(\zeta_0,\zeta_1)
 =\zeta_0p_0q_1\begin{pmatrix}0&I\\I&0\end{pmatrix}
                          +\zeta_1p_1q_0I_6.
\]
On \(T_0\), write
\(p=e^{si}\), \(q=e^{ti}\). Using \(a_0=-2/7\) in
\eqref{eq:height-free-coefficient} and~\eqref{eq:K0-free-equation} gives
\begin{equation}\label{eq:K0-matching}
\begin{gathered}
 \cQ(O)=-\frac{389}{11025}\sin s\cos t,\quad
 \cL K_0(\zeta_0,\zeta_1)=\frac8{15}(\zeta_1-\zeta_0)\sin s\cos t, \quad
 \zeta_1-\zeta_0=\frac{389}{5880}.
\end{gathered}
\end{equation}
The last equation is the required cancellation in \eqref{eq:K0-free-equation}.
There remains one free coefficient. We choose \(\zeta_0=-1/5\) to satisfy
the transverse positivity test \eqref{eq:height-positivity-test} below.
This gives
\begin{equation}\label{eq:K0}
 K_0=-\frac15p_0q_1\begin{pmatrix}0&I\\I&0\end{pmatrix}
       -\frac{787}{5880}p_1q_0I_6.
\end{equation}

The remaining test is positivity transverse to \(\Tlift\).
For these data, Proposition~\ref{prop:odd-margin} proves
\(\cQ(O)+\cL K_0\geq\rho/250\) on \(\cZ_{\mathrm I}\),
as stated in \eqref{eq:odd-lower-bound}.
Here is the finite selection problem behind this test. After normalizing
\(p=1,q=\cos\theta+i\sin\theta\), the curvature expression is quadratic
in the transformed orthonormal height vectors \(e,f\).
The matrices in \eqref{eq:odd-coefficient-matrices} retain the term
\(d_0\langle e,f\rangle\), which vanishes since \(e\perp f\).
After this cancellation, the expression depends only on the transverse
components \(e_\perp,f_\perp\). The resulting quadratic form on
\(L^\perp\oplus L^\perp\) must be positive definite for every
\(x=\sin^2\theta\in[0,1]\).
For the polynomials \(P,V,W,d\) defined in
\eqref{eq:height-polynomials}, a sufficient scalar test is
\begin{equation}\label{eq:height-positivity-test}
 d(x)>0,\qquad
 \frac{P(x)-\max\{|V(x)|,|W(x)|\}}{2d(x)}>\frac1{250}
                         \quad(0\leq x\leq1).
\end{equation}
The bounds \eqref{eq:height-polynomial-bounds} and the exact rational
inequality \eqref{eq:height-numerical-margin} prove this test and
\eqref{eq:odd-lower-bound}. The rational values \(t =3/10\),
\(a_0=-2/7\), and \(\zeta_0=-1/5\) therefore satisfy
\eqref{eq:height-positivity-test}.
Appendix~\ref{app:coefficient-choice} describes the coefficient matching
and the remaining freedom more explicitly.

\subsection{Producing a cubic term without losing the quadratic bound}
\label{sec:BC-choice}

The height construction alone leaves \(\Tlift\). We need another
first-order tensor whose minimizing plane has a nonzero first displacement,
and a further tensor with which it has a useful cubic interaction.
The left and right Maurer--Cartan forms are global and have constant
restrictions to the great-circle tori. They therefore provide an ansatz
that respects the necessary condition \(\cL h=0\). Set
\begin{equation}\label{eq:B-tensor}
 B=\tfrac12(\alpha_1+\widehat\alpha_1)
       \odot\left(\alpha_2+\tfrac3{13}\beta_2\right)
   -\tfrac12(\beta_1+\widehat\beta_1)
       \odot\left(\tfrac3{13}\alpha_2+\beta_2\right),
\end{equation}
and set \(C=\alpha_1^2\).
The tensor \(B\) moves the minimizing plane to first order on
\(\Tlift\), whereas \(C\) has zero first normal displacement there.
These assertions are proved in Lemma~\ref{lem:pole-free}.
The factor \(1/2\) fixes a normalization of \(B\); its amplitude is
\(\eps\) in \eqref{eq:small-tensor-choice}. Replacing \(B\) by
\(uB\) would require replacing \(K_1,K_2,K_3\) by
\(uK_1,u^2K_2,uK_3\), respectively, as follows from the
homogeneities in \eqref{eq:five-corrections}. Its relative coefficient has a more specific role.
Replace \(3/13\) in \eqref{eq:B-tensor} by a free number \(b_B\), and
seek a correction proportional to \((2r_0^2-1)g_0\). On \(T_0\),
this correction produces only the Fourier mode \(\cos2(t-s)\).
Consequently, we require \(\cQ(B_{b_B})|_{\Tlift}\) to be a multiple of
that mode. Comparing its values at
\((\cos\theta,\sin\theta)=(0,1)\) and \((3/5,4/5)\) already forces
\((13b_B-3)^2=0\), hence \(b_B=3/13\).
Equations~\eqref{eq:B-selection-values} and~\eqref{eq:B-selection-equation}
give this exact two-point calculation. The full identity
\eqref{eq:BB-cancel} proves that the selected value works for every
\(\theta\). The remaining transverse compatibility and positive cubic
mean follow from \eqref{eq:BC-j}--\eqref{eq:BC-k} and the positive
constant term in the cubic formula~\eqref{eq:basic-cubic}.

A crucial compatibility condition is easily missed. A perturbation of a
quadratic bound \(c|U|^2\) by a linear term \(\eps\ell(U)\) can be
negative arbitrarily close to \(U=0\), however small \(\eps>0\) is.
Consequently, we require each additional corrected second-order term to
vanish on \(\Tlift\) together with its first derivative along
\(\cZ_{\mathrm I}\). The five terms to be controlled are
\begin{equation}\label{eq:five-corrections}
\begin{gathered}
 2\cQ(O,B)+\cL K_1,\quad 2\cQ(O,C), \quad
 \cQ(B)+\cL K_2,\quad
 2\cQ(B,C)+\cL K_3, \text{ and } \cQ(C).
\end{gathered}
\end{equation}

For \(K_2\), the calculation on \(T_0\) gives
\(\cQ(B)=(128/845)\cos2(t-s)\). The induced Laplacian is
\begin{equation}\label{eq:torus-laplacian}
 \Delta_T=\frac{29}{30}(\partial_s^2+\partial_t^2)
                    +\frac{38}{30}\partial_s\partial_t,
 \qquad \Delta_T\cos2(t-s)=-\frac83\cos2(t-s).
\end{equation}
Since \(\cL(fg_0)=-\frac12\Delta_T f\) on a background flat torus,
the ansatz \(\kappa_2(2r_0^2-1)g_0\) must satisfy
\begin{equation}\label{eq:K2-matching}
 \frac{128}{845}+\frac43\kappa_2=0,
 \qquad \kappa_2=-\frac{96}{845}.
\end{equation}
We therefore set \(K_2=-(96/845)(2r_0^2-1)g_0\).
For the mixed \(B,C\) term, parity already gives zero value on
\(\Tlift\); its transverse derivative must still be cancelled.
For the ansatz \(\kappa_3(Bg_0^{-1}C+Cg_0^{-1}B)\), the two transverse
identities \eqref{eq:BC-j} and~\eqref{eq:BC-k} give the same scalar equation:
\begin{equation}\label{eq:K3-matching}
 \frac{128}{325}-\frac{16}{65}\kappa_3=0,
 \qquad \kappa_3=\frac85.
\end{equation}
Consequently, we set
\(K_3=(8/5)(Bg_0^{-1}C+Cg_0^{-1}B)\).
The products denote contraction through the background cometric.
For the even functions \(2\cQ(O,C)\), \(\cQ(B)+\cL K_2\),
and \(\cQ(C)\), the values vanish by \eqref{eq:BB-cancel};
reflection across \(L_0\) forces their first transverse derivatives
to vanish. Proposition~\ref{prop:mixed-jets} verifies
all these conditions; the \(O,B\) correction is specified next.

\subsection{A global correction for the mixed term}\label{sec:K1-choice}

The remaining task is to cancel the value and first transverse derivative
of \(2\cQ(O,B)\) on \(\Tlift\). We also impose parities that will
preserve the cubic average. The reflection
\begin{equation}\label{eq:reflection}
 (p,q)\longmapsto(ipi^{-1},iqi^{-1})
\end{equation}
fixes \(\Tlift\) pointwise and reverses its four transverse directions
in \(\cZ_{\mathrm I}\); \(O,C,H_0\) are even and \(B\) is odd.
We require \(K_1\) to be odd under this reflection and under
\((p,q)\mapsto(-p,-q)\), and to satisfy
\begin{equation}\label{eq:admissible-mixed}
 S:=2\cQ(O,B)+\cL K_1,
 \qquad S|_{\Tlift}=0,\qquad \dd S|_{\Tlift}=0.
\end{equation}
The differential is taken on \(\cZ_{\mathrm I}\).
Since \(O,B\) are fixed, these are linear conditions on \(K_1\)
along \(\Tlift\).

Here is a complete geometric choice. Put \(N=L_0^\perp\),
\(p_s=\cos s\,e_0+\sin s\,e_1\), and
\(q_t=\cos t\,e_0+\sin t\,e_1\). Parametrize nearby supporting
planes by graphs \(U:L_0\to N\). With
\(\iota:L_0\hookrightarrow\R^4\) the inclusion, set
\[
 A_U=(\iota+U)(I_{L_0}+U^TU)^{-1/2},\qquad
 (p,q)=(A_Up_s,A_Uq_t).
\]
Define the linear functionals \(L_p(s,t)[U]\) and \(L_q(s,t)[U]\)
to be the derivatives at \(U=0\) of
\(2\cQ(O_p,B)\) and \(2\cQ(O_q,B)\), respectively, evaluated on
the tangent planes of these graph tori.
The metric and tensors above determine these functionals.
The functional \(L_p\) is
\(\pi\)-antiperiodic in \(s\) and \(\pi\)-periodic in \(t\),
with the reverse parities for \(L_q\).

To cancel these derivatives through \eqref{eq:torus-linearization},
we need second primitives in the indicated angular variables. Set
\begin{equation}\label{eq:mixed-primitives}
\begin{aligned}
 \kappa_p(s,t)[U]
 &=D_{\mathrm I}\int_0^\pi(\xi-\pi/2)
                         L_p(s+\xi,t)[U]\,\dd\xi,\\
 \kappa_q(s,t)[U]
 &=D_{\mathrm I}\int_0^\pi(\xi-\pi/2)
                         L_q(s,t+\xi)[U]\,\dd\xi.
\end{aligned}
\end{equation}
Antiperiodicity gives \eqref{eq:mixed-primitive-equations}:
\(\partial_s^2\kappa_p=2D_{\mathrm I}L_p\) and
\(\partial_t^2\kappa_q=2D_{\mathrm I}L_q\).
The factor \(2D_{\mathrm I}=15/4\) cancels the coefficient
\(-1/(2D_{\mathrm I})\) of the corresponding second derivative
in \eqref{eq:torus-linearization}.
 Represent these
functionals by
\(\kappa_p[U]=\langle D_p,U\rangle_{\mathrm{HS}}\) and
\(\kappa_q[U]=\langle D_q,U\rangle_{\mathrm{HS}}\), where
\(D_p,D_q:L_0\to N\). The subscript \(\mathrm{HS}\) denotes the
Euclidean Hilbert--Schmidt inner product.

In a tubular neighborhood of \(T_0\), write
\(p=\sqrt{1-|x|^2}\,p_s+x,\quad q=\sqrt{1-|y|^2}\,q_t+y,\quad x,y\in N\).
Choose any smooth radial cutoff \(\chi(|x|^2,|y|^2)\) equal to one
near zero and supported in this neighborhood, and define
\begin{equation}\label{eq:geometric-K1}
\begin{aligned}
 k_p&=\chi\bigl[\langle D_pq_t,y\rangle\,\dd t^2
             +\dd t\odot\langle D_pq_t',\dd y\rangle\bigr],\\
 k_q&=\chi\bigl[\langle D_qp_s,x\rangle\,\dd s^2
             +\dd s\odot\langle D_qp_s',\dd x\rangle\bigr],\\
 K_1&=k_p+k_q.
\end{aligned}
\end{equation}
Extend by zero outside the neighborhood. The use of both \(q_t\) and
\(q_t'\) is essential: they form an orthonormal basis of \(L_0\).
At \(p=\pm q\), a transverse variation can have \(Uq_t=0\) but
\(Uq_t'\ne0\); the tangent--normal component detects the latter.
Thus the construction does not divide by \(\sin(t-s)\); it remains
smooth across the diagonal and antidiagonal and realizes the prescribed
first-order transverse jet.
Proposition~\ref{prop:mixed-existence} proves smoothness, parity, and
\eqref{eq:admissible-mixed}.

\subsection{Fixing the amplitudes}\label{sec:amplitude-choice}

The first-derivative cancellations are precisely what makes a uniform
small choice possible. Define functions on \(\cZ_{\mathrm I}\) by
\begin{equation}\label{eq:FG}
\begin{aligned}
 F&=2\cQ(O,B)+\cL K_1+2\cQ(O,C),\\
 G&=\cQ(B)+\cL K_2+2\cQ(B,C)+\cL K_3+\cQ(C).
\end{aligned}
\end{equation}
Proposition~\ref{prop:mixed-jets} gives \(F,G=O(\rho)\).
Consequently the following constant is finite:
\begin{equation}\label{eq:Cstar}
 C_*=1+\sup_{\cZ_{\mathrm I}\setminus\Tlift}
                              \frac{|F|+|G|}{\rho}.
\end{equation}
Choose, in this order,
\begin{equation}\label{eq:small-tensor-choice}
 \eps=\frac1{1000C_*},\qquad h_*=O+\eps(B+C),\qquad
 K=K_0+\eps K_1+\eps^2(K_2+K_3).
\end{equation}
Indeed,
\(\cQ(h_*)+\cL K=\cQ(O)+\cL K_0+\eps F+\eps^2G\), so
\(\eps\leq1\) and \(\eps C_*\leq1/1000\) imply~\eqref{eq:plus-margin}.

We now make the coefficient positive on type II without changing it on
type I. Define
\begin{align}
 M_*&=1+\|2\cQ(H_0,h_*)\|_{\infty,\cZ_{\mathrm{II}}}
       +\|\cQ(h_*)+\cL K\|_{\infty,\cZ_{\mathrm{II}}},
       \label{eq:Mstar}\\
 c_*&=20M_*,\qquad H=h_*+c_*H_0.\label{eq:H-final}
\end{align}
These suprema concern smooth functions on a compact set.
Using \eqref{eq:H0-minus-positive} and \(M_*\geq1\), we obtain
\eqref{eq:typeII-amplitude-bound}:
\(\cQ(H)+\cL K\geq c_*^2/10-c_*M_*-M_*=20M_*^2-M_*>0\)
on \(\cZ_{\mathrm{II}}\).
The choice \(c_*=20M_*\) is therefore non-sharp; the displayed
quadratic inequality is the requirement it satisfies.
Proposition~\ref{prop:quadratic-final} proves \eqref{eq:q2}, namely
\(\Vtwo:=\cQ(H)+\cL K\geq0\) with zero set \(\Tlift\), and
also \(\dd\Vtwo|_{\Tlift}=0\).
The order of choices matters: fix \(K_1\), then \(\eps\), then \(c_*\).
Only after all of them have been fixed will \(\tau\) tend to zero.

\subsection{The cubic mean and the complete metric}\label{sec:complete-metric}

For the fixed \(H,K\), let \(\Vthree=\cR(H,K)\), and write
\(\Vthreebase=\iota_0^*\Vthree\) as a function of \(s,t\) on \(T_0\).
The zero value and differential of \(\Vtwo\) make this restriction
independent of the normal section.

By \eqref{eq:cubic-mean}, we have
\(m:=\mean_{T_0}\Vthreebase=(512/375)\eps^3>0\), where the mean uses the background area form normalized to mass one.
The mechanism is a cubic interaction with two copies of \(B\) and one
of \(C\). Reflection removes odd powers of \(B\); the Gauss--Bonnet theorem
removes the mean when \(B\) is absent; antipodal symmetries remove the
remaining terms involving \(O\) or \(H_0\). Hence neither the large
coefficient \(c_*\) nor the freedom in \(K_1\) destroys this positive
mean. The fraction \(512/375\) is the constant Fourier coefficient in
\eqref{eq:basic-cubic}; the oscillatory term has zero mean.

Following \cite[Section~6]{BrendleHung2026}, let \(f\) be the unique smooth mean-zero
solution of
\begin{equation}\label{eq:poisson}
 \Delta_T f=2(\Vthreebase-m),
\end{equation}
with \(\Delta_T\) given in \eqref{eq:torus-laplacian}.
The factor \(2\) cancels the conformal first-variation factor
\(-1/2\) in \eqref{eq:conformal-linearization}.
The zero-mean right-hand side is exactly the solvability condition;
the positivity of \(m\) makes the resulting constant curvature coefficient
useful.
 For a concrete extension, put
\(a_p=p_0^2+p_1^2\), \(a_q=q_0^2+q_1^2\), and, where defined,
\(z_p=(p_0+ip_1)/\sqrt{a_p}\) and
\(z_q=(q_0+iq_1)/\sqrt{a_q}\).

Choose a smooth cutoff \(\chi\) equal to zero on
\((-\infty,1/2]\) and one on \([3/4,\infty)\), regard \(f\) as a
function on \(S^1\times S^1\), and set
\begin{equation}\label{eq:smooth-extension}
 \widetilde f(p,q)=\chi(a_p)\chi(a_q)f(z_p,z_q),
\end{equation}
extended by zero wherever a projection is undefined.
This is a new cutoff, unrelated to the one in \eqref{eq:geometric-K1}.
Its thresholds only keep the support away from the undefined projections;
any fixed thresholds strictly between zero and one, in increasing order,
would work.

\begin{theorem}[Perturbation family]\label{thm:construction}
Fix the background and tensors specified in this section. Choose
\(K_1\) by \eqref{eq:geometric-K1}, or by \eqref{eq:K1}, or more
generally any smooth symmetric \(2\)-tensor satisfying \eqref{eq:admissible-mixed}
and its stated parities. Fix the amplitudes by
\eqref{eq:Cstar}--\eqref{eq:H-final} and the function by
\eqref{eq:poisson}--\eqref{eq:smooth-extension}. Then there exists
\(\tau_0>0\) such that
\begin{equation}\label{eq:final-metric}
 g_\tau=g_0+\tau\big(c_*H_0+O+\eps(B+C)\big)
       +\tau^2\big(K_0+\eps K_1+\eps^2(K_2+K_3)\big)
       +\tau^3\widetilde f\,g_0
\end{equation}
is a smooth Riemannian metric with positive sectional curvature for every
\(0<\tau<\tau_0\).
\end{theorem}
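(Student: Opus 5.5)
The plan is to apply the three-level minimization framework of Lemma~\ref{lem:normal-minimum} on \(\cN=\Gr_2(TM)\), with \(F_\tau(\sigma)=\Sec_{g_\tau}(\sigma)\) and zero set \(\cZ=\cZ_{\mathrm I}\sqcup\cZ_{\mathrm{II}}\). The argument runs in two stages. First, \(g_\tau\) is a smooth Riemannian metric for small \(\tau\). Every ingredient (\(H_0,O,B,C,K_0,K_1,K_2,K_3,\widetilde f\)) is a smooth global symmetric tensor; \(K_1\) by Proposition~\ref{prop:mixed-existence}, \(\widetilde f\) by construction through the cutoff. On compact \(M\), \(g_0\) dominates the bounded perturbation once \(\tau\) is small.

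Second, the expansion. The background hypotheses come from Propositions~\ref{prop:background} and~\ref{prop:clean-zero}: \(\Sec_{g_0}\ge0\), the zero set is exactly \(\cZ\), \(\cZ\) is a smooth compact embedded submanifold, and the normal Hessian is nondegenerate of rank \(8\). Away from a neighborhood of \(\cZ\), \(\Sec_{g_0}\) has a positive minimum, so positivity persists for small \(\tau\). Near \(\cZ\), write \(H=c_*H_0+h_*\) and \(K\) as in \eqref{eq:small-tensor-choice}. The \(\tau^3\) term \(\widetilde f g_0\) is handled by viewing the path as \(g_0+\tau H+\tau^2K+\tau^3\widetilde f g_0\). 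The order-one coefficient is \(\cL H=0\), since each summand lies in \(\ker\cL\) on both tori families (tangential entries constant or splitting into functions of \(s\) alone and \(t\) alone). The quadratic coefficient is then \eqref{eq:metric-reduction}, namely \(\Vtwo=\cQ(H)+\cL K\). The cubic coefficient is \(\Vthree+\cL(\widetilde f g_0)\), because the \(\tau^3\) term enters \eqref{eq:cubic-reduction} only through \(F_3\), linearly, via \(\cL\). Proposition~\ref{prop:quadratic-final} supplies \(\Vtwo\ge0\) on \(\cZ\) with zero set exactly \(\Tlift\), together with \(\dd\Vtwo|_{\Tlift}=0\). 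By Lemma~\ref{lem:change-section}, the cubic coefficient on \(\Tlift\) is therefore well defined, independent of the tubular choice and of the second displacement.

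It then suffices to check that \(r>0\) on \(q^{-1}(0)=\Tlift\). On \(T_0\) the cubic coefficient is \(\Vthreebase+\cL(\widetilde fg_0)\). Near \(T_0\) we have \(a_p=a_q=1\), so \(\widetilde f=f(s,t)\). The conformal linearization on a background flat torus, \eqref{eq:conformal-linearization}, gives \(\cL(\widetilde fg_0)=-\tfrac12\Delta_Tf=-(\Vthreebase-m)\) by \eqref{eq:poisson}. Hence the cubic coefficient equals the constant \(m=(512/375)\eps^3>0\) from \eqref{eq:cubic-mean}. The last clause of Lemma~\ref{lem:normal-minimum} then yields \(F_\tau>0\) on a neighborhood of \(\cZ\) for small \(\tau\); combined with the far region, this gives a uniform \(\tau_0\).

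I expect the main obstacle to be the legitimacy of the reduced expansion \emph{uniformly} near the degenerate parts of \(\cZ\), namely over \(p=\pm q\) on type~I and near \(r_0=0\) on type~II, where base-point directions enter the normal bundle. Lemma~\ref{lem:normal-minimum} on \(\Gr_2(TM)\) handles this intrinsically, provided the Hessian rank statement of Proposition~\ref{prop:clean-zero} holds there. A second point to verify is that the cubic mean \(m\) really is computed for the final \(H\), including \(c_*H_0\) and the arbitrary admissible \(K_1\). For this I would invoke the parity arguments behind \eqref{eq:cubic-mean}: oddness of \(B\) and \(K_1\) under \eqref{eq:reflection}, antipodal oddness for terms with \(O\) and \(H_0\), and Gauss--Bonnet for \(B\)-free terms. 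These together ensure that only the \(B,B,C\) interaction survives in the mean.
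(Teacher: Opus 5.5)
Your proposal is correct and follows essentially the same route as the paper's proof. The shared steps are: positive definiteness of \(g_\tau\) from bounded perturbations of \(g_0\); the observation that the \(\tau^3\widetilde f\,g_0\) term leaves \(\Vtwo\) unchanged and adds \(\cL(\widetilde f\,g_0)=-\tfrac12\Delta_T f\) to the reduced cubic, so that by \eqref{eq:poisson} the cubic coefficient becomes the constant \(m\) on \(\Tlift\); and then Lemma~\ref{lem:normal-minimum} with the full normal Hessian of Proposition~\ref{prop:clean-zero}, together with compactness on the complement of the tube. One small imprecision: \(a_p=a_q=1\) holds on \(T_0\) itself, not near it (nearby one only has \(\chi(a_p)\chi(a_q)=1\)), but this is harmless because \(\cL(\widetilde f\,g_0)\) on \(\Tlift\) depends only on the restriction of \(\widetilde f\) to \(T_0\).
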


Sections~\ref{sec:background}--\ref{sec:cubic} prove the background,
second-order, and cubic assertions used in this recipe.
Section~\ref{sec:positivity} proves the theorem by applying
Lemma~\ref{lem:normal-minimum}: the conformal term leaves \(\Vtwo\)
unchanged and makes the reduced cubic coefficient equal to \(m\) on
its zero set. Compactness gives a single \(\tau_0\) for all base
points and all two-planes.
The uniform remainder estimate \eqref{eq:final-normal-minimum} and
Lemma~\ref{lem:normal-minimum} determine a sufficiently small interval;
no explicit or maximal value of \(\tau_0\) is needed.

%% file: background.tex
\section{Geometry of the background and curvature expansions}\label{sec:background}

The metric and zero families in this section are those specified in
Section~\ref{sec:background-choice}. We first verify their geometric
properties, then justify the curvature expansions used in the construction.

\subsection{The Cheeger deformations and the zero-curvature tori}
\begin{proposition}\label{prop:background}
The metric \(g_0\) is smooth, positive definite, and nonnegatively curved.
Its zero-curvature planes are exactly
\begin{equation}\label{eq:zero-criterion}
 P_u=\Span\{E_u,F_u\},\qquad |u|=1,
 \qquad Ru=u\ \text{or}\ Ru=-u.
\end{equation}
\end{proposition}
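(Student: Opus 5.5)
We realize \(g_0\) as an iterated Cheeger deformation and apply the standard zero-curvature criterion twice. Start from the product metric \(3I_6\) on \(S^3\times S^3\), i.e. \(3Q\oplus 3Q\). This is bi-invariant on the group \(G=S^3\times S^3\) and nonnegatively curved. Its zero planes are exactly those spanned by \(X,Y\) with \([X,Y]=0\) in \(\mathfrak g=\mathfrak{su}(2)\oplus\mathfrak{su}(2)\).

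\textbf{First deformation.} Apply the diagonal right Cheeger deformation. Write \(g_1\) as the submersion metric from \((G,3Q\oplus3Q)\times(S^3,t_1^{-1}Q)\) with \(t_1=1/3\), under the action \((p,q,h)\mapsto(ph^{-1},qh^{-1},h)\). A direct computation of the cometric \(g_1^{-1}=g^{-1}+t_1\,\iota\iota^T\), with \(\iota\) the diagonal orbit map in the left frame, reproduces \eqref{eq:intermediate}. Smoothness and positive definiteness are clear. By O'Neill, \(g_1\ge0\). Moreover, a \(g_1\)-plane is flat iff its horizontal lift is flat in the product and the A-tensor vanishes.

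The standard Cheeger criterion (Eschenburg, Ziller) states this in terms of the inverted plane \(\Phi^{-1}\sigma\), where \(\Phi=g_{\rm prod}^{-1}g_1\). The criterion is that \(\tilde X,\tilde Y\) commute in \(\mathfrak g\), and that their projections to the diagonal \(\mathfrak{su}(2)\) commute. For \(\tilde X=(a,b)\), \(\tilde Y=(c,d)\) this reads \([a,c]=[b,d]=0\) and \([a+b,c+d]=0\). Ruling out the planes lying in one factor, which are curved there, leaves \(\tilde X\propto(u,0)\), \(\tilde Y\propto(0,u)\) with a common axis. This is the statement ``the two axes become parallel.''

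\textbf{Second deformation.} Apply the diagonal left Cheeger deformation with parameter \(t=3/10\). Left translations by the diagonal act isometrically for \(g_1\), since \(g_1\) is right-\(\Delta\)-invariant, being built from the diagonal-right quotient of bi-invariant metrics. Hence the construction is legitimate. In the left coframe the orbit vectors of the diagonal left action are \((\Ad_{p^{-1}}w,\Ad_{q^{-1}}w)\), and expressed in the frame adapted at \(q\) relative to \(p\) this yields the \(R=\Ad_r\) entries of \eqref{eq:background-family}. Again \(g_0\ge0\).

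A plane is flat for \(g_0\) iff, after undoing \(\Phi_2\), it is \(g_1\)-flat and the projection of the plane to the left diagonal Lie algebra is abelian. Apply this to the candidate \(\Span\{(u,0),(0,u)\}\) in the left frame. In a common right-translated frame the two generators are \(\Ad_p u\) and \(\Ad_q u\). These must commute, so \(\Ad_p u=\pm\Ad_q u\), which is \(Ru=\pm u\).

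The step needing care is that the Cheeger map \(\Phi_2\) preserves \(P_u\). This holds because \(\Span\{E_u,F_u\}\) is invariant under the relevant cometric operator exactly when \(Ru=\pm u\); one reads this off \eqref{eq:background-family}. Hence the inverted plane is \(P_u\) itself, and conversely every flat plane has this form. This gives \eqref{eq:zero-criterion}.

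\textbf{Main obstacle.} The main obstacle is the completeness of the flat-plane classification at the first stage. One must show that no non-abelian or mixed plane survives, including planes with components in both factors not of the form above. I would handle this by reducing, via \([a,c]=0\) and \([b,d]=0\) in \(\mathfrak{su}(2)\), to \(a\parallel c\) and \(b\parallel d\). A short case analysis on which components vanish then shows that \([a+b,c+d]=0\) forces a common axis \(u\).
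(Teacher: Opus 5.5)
There is a genuine gap: your argument proves only one of the two inclusions that ``exactly'' requires.

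\textbf{What is correct.} Your necessity argument matches the paper's. At each stage, a zero plane of the deformed metric must, after undoing the reparametrization, be flat for the old metric and have commuting images in the acting Lie algebra. This forces parallel axes at the first stage and \(u\times Ru=0\) at the second. You also correctly note that \(P_u\) is fixed by the reparametrization when \(Ru=\pm u\). Two small corrections, neither of which affects the conclusion:
\begin{itemize}
\item Diagonal left translations are \(g_1\)-isometries because \(g_1\) is constant in the left coframe, not because of right \(\Delta\)-invariance.
\item The Lie algebra images of \(E_u,F_u\) are \(2\Ad_pu-\Ad_qu\) and \(2\Ad_qu-\Ad_pu\), not \(\Ad_pu,\Ad_qu\). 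Their bracket is \(3[\Ad_pu,\Ad_qu]\), so you still get \(Ru=\pm u\).
\end{itemize}

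\textbf{The gap: the reverse inclusion.} You state the Cheeger criterion as an equivalence (``a plane is flat for \(g_0\) iff, after undoing \(\Phi_2\), it is \(g_1\)-flat and its projection is abelian''). It only gives ``only if''. The reparametrized curvature of \(C_t^{-1}\sigma\) is the old curvature of \(\sigma\), plus the bracket term, plus a third nonnegative O'Neill \(A\)-tensor term. You mention that term in your first formulation and then drop it, and it need not vanish when the other two do.

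For example, Cheeger-deforming flat \(\R^2\) by the rotation action gives \(\dd r^2+\frac{r^2}{1+tr^2}\dd\theta^2\). Its Gaussian curvature is \(3t/(1+tr^2)^2>0\), although the old plane is flat and \(\mathfrak{so}(2)\) is abelian. The same issue affects your description of the \(g_1\)-flat planes.

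So your argument shows that every zero plane of \(g_0\) is some \(P_u\) with \(Ru=\pm u\). It never shows that these planes actually have zero curvature for \(g_0\).

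\textbf{How to close it.} You need an independent flatness argument. The paper uses the flat tori of Proposition~\ref{prop:clean-zero}:
\begin{itemize}
\item For \(Ru=u\), \(P_u\) is tangent to \(C_L\times C_L\) with \(L=\Span\{p,pu\}\). For \(Ru=-u\), it is tangent to \(C_L\times C_{L^\perp}\).
\item These tori are fixed-point sets of isometries of \(g_0\). The isometry is the diagonal action of the element of \(SO(4)\) equal to \(+I\) on \(L\) and \(-I\) on \(L^\perp\), composed in the second case with the antipodal map of the second factor. Hence the tori are totally geodesic.
\item Their induced metrics are constant in circle coordinates, hence flat.
\item The Gauss equation then gives zero ambient sectional curvature on their tangent planes.
\end{itemize}
A direct check that the \(A\)-tensor term vanishes on \(P_u\) would also work, but some such argument is required.
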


\begin{proof}
We apply Cheeger's submersion construction
\cite{Cheeger1973,ONeill1966} in the metric-operator form of
M\"uter's formulas~\cite[Propositions~1.1 and~1.3]{Ziller2009},
successively to the two diagonal actions.
On each acting copy of \(SU(2)\), fix the bi-invariant metric \(Q\)
for which \(i,j,k\) are orthonormal. Our Cheeger parameter convention
is the quotient of \((M,g)\times(SU(2),t^{-1}Q)\) for \(t>0\).

Begin with the bi-invariant product metric \(3I_6\). A Cheeger
deformation by simultaneous right multiplication, with parameter \(1/3\),
gives \(g_1\) in \eqref{eq:intermediate}.
A second deformation, now by simultaneous left multiplication and with
parameter \(3/10\), adds
\(\frac3{10}\left(\begin{smallmatrix}I&R\\R^T&I\end{smallmatrix}\right)\)
to the cometric. This gives \eqref{eq:background-cometric}. These assertions
follow either from the defining submersion or from the metric-operator
formula in~\cite[Proposition~1.1]{Ziller2009}. Nonnegative curvature follows
from the submersion formula. The singular values of \(10I+9R\) are at most
\(19\), so the cometric eigenvalues lie in \([1/3,8/5]\); in particular
\(g_0\geq(5/8)I_6\).

A zero-curvature plane for the product metric is mixed. In the reparametrized Cheeger formula
\cite[Proposition~1.3]{Ziller2009}, the first group-bracket term forces its
two axes to be parallel. The remaining planes are precisely \(P_u\).
They are flat for \(g_1\): the corresponding abelian two-dimensional
subgroups, and their left translates, are totally geodesic by the Koszul
formula. For the second deformation, the two Lie algebra vectors in the
group-bracket term are \(2\Ad_pu-\Ad_qu\) and
\(2\Ad_qu-\Ad_pu\). After conjugation by \(p^{-1}\), their bracket is
a nonzero multiple of
\((2u-Ru)\times(-u+2Ru)=3u\times Ru\).
Every summand of the reparametrized curvature formula is nonnegative, so a
zero-curvature plane must satisfy \(Ru=\pm u\). At these planes the
reparametrization preserves \(P_u\): it rescales an orbit direction in
that plane and fixes its horizontal direction. The flat tori in Proposition~\ref{prop:clean-zero} prove that each plane satisfying this condition is indeed flat.
\end{proof}

\begin{proposition}\label{prop:clean-zero}
The zero set of the sectional-curvature function on \(\cN\) is
\(\cZ=\cZ_{\mathrm I}\sqcup\cZ_{\mathrm{II}}\). Its two components are
disjoint compact smooth six-dimensional submanifolds. Each torus in
\eqref{eq:plus-incidence}--\eqref{eq:minus-incidence} is flat and totally
geodesic. The sectional-curvature Hessian is positive definite on the
eight-dimensional normal bundle of \(\cZ\). Fix a smooth auxiliary
Riemannian metric on \(\cN\), and let \(\dist\) denote its induced
distance. There is \(b_0>0\), depending on this choice, such that
\begin{equation}\label{eq:clean-lower-bound}
 \Sec_{g_0}(\sigma)\geq b_0\dist(\sigma,\cZ)^2
 \qquad\text{for all }\sigma\in\Gr_2(TM).
\end{equation}
\end{proposition}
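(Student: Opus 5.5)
The plan is to read off the zero set from Proposition~\ref{prop:background}, identify it geometrically with the incidence sets \eqref{eq:plus-incidence}--\eqref{eq:minus-incidence}, and then prove nondegeneracy of the normal Hessian. The quadratic lower bound \eqref{eq:clean-lower-bound} then follows by a standard Morse--Bott and compactness argument.

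\textbf{Identifying the zero set.} By Proposition~\ref{prop:background}, the zero planes are \(P_u=\Span\{pu,qu\}\) at base points \((p,q)\) with \(Ru=\pm u\), where \(r=p^{-1}q\).
\begin{itemize}
\item If \(Ru=u\), then \(r\) commutes with \(u\), so \(r\in\Span\{1,u\}\). Hence \(q=pr\in L:=\Span\{p,pu\}\), and \(qu\in L\) as well. Thus \(P_u=T_{(p,q)}(C_L\times C_L)\), since the circle through \(p\) with velocity \(pu\) is \(p\,e^{su}\subset C_L\).
\item If \(Ru=-u\), then \(r\) is imaginary and orthogonal to \(u\). Quaternion multiplication by \(p\) is an isometry, so \(q=pr\) and \(qu=pru\) are orthogonal to \(p\) and \(pu\). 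Hence \(q\in C_{L^\perp}\) and \(P_u=T_{(p,q)}(C_L\times C_{L^\perp})\).
\end{itemize}
Both directions reverse, so the zero set is exactly \(\cZ_{\mathrm I}\cup\cZ_{\mathrm{II}}\). The two pieces are disjoint because \(u\ne-u\), and each is compact.

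\textbf{Smoothness and dimension.} I would present \(\cZ_{\mathrm I}\) as the image of the bundle \(\{(L,p,q):p,q\in C_L\}\) over \(\Gr_2(\R^4)\), which has dimension \(4+1+1=6\), and similarly for \(\cZ_{\mathrm{II}}\). The map to \(\cN\) is injective, since \(L=\Span\{p,pu\}\) is recovered from the base point and the plane. It is an immersion, which one checks from the \(\Ad\)-equivariance: the diagonal actions \((p,q)\mapsto(apb^{-1},aqb^{-1})\) are isometries of \(g_0\) and act transitively on \(\Gr_2(\R^4)\). By compactness the map is then an embedding.

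\textbf{Flat and totally geodesic tori.} I would lift each torus to the Cheeger total space \(S^3\times S^3\times SU(2)\times SU(2)\) with its product bi-invariant metric. There \((p\,e^{su},q\,e^{tu})\), together with suitable group coordinates, is a left coset of an abelian two-dimensional subgroup, hence flat and totally geodesic. Its tangent can be chosen horizontal exactly when \(Ru=\pm u\). O'Neill's \(A\)-tensor on horizontal lifts is given by the vertical part of the Lie bracket, and this bracket vanishes on the abelian subalgebra. So the projection is a totally geodesic flat torus, with induced metric \eqref{eq:torus-matrices}.

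\textbf{Normal Hessian (main obstacle).} Here \(\dim\cN=14\), so the normal bundle has rank \(8\). M\"uter's formula writes \(\Sec_{g_0}\) as a sum of nonnegative terms: the product-metric curvature, plus squared norms of the two group-bracket terms, with the reparametrization factored out. At a zero each term vanishes to second order, so the Hessian equals the sum of the squared norms of the linearizations of the quantities whose vanishing defines \(\cZ\). It therefore suffices to show that the joint kernel of these linearizations is exactly \(T\cZ\). I would check this in three steps.
\begin{enumerate}
\item The product curvature of a nearly mixed plane is positive definite transverse to the mixed planes. This is a rank computation in \(\Gr_2(\R^6)\).
\item The first bracket \(a\times b\) of the two axes has surjective derivative onto the directions that destroy parallelism.
\item The second bracket, \(3u\times Ru\) after conjugation, has derivative in the base and axis directions that is injective modulo \(T\cZ\).
\end{enumerate}
By homogeneity under the diagonal isometries, step 3 reduces to a one-parameter family of base points \(p=1\), \(q=\cos\theta+u'\sin\theta\). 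The delicate cases are where the base directions enter: \(p=\pm q\) on type I (two directions) and \(\langle p,q\rangle=0\) on type II (one direction). In these cases I would compute \(\dd(Ru)\) explicitly, using \(\dd R=\operatorname{ad}\) of the variation of \(r\), and confirm that the counts \(2+\dots=8\) add up with no kernel. This uniform nondegeneracy across the diagonal and antidiagonal is the step I expect to require the most care.

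\textbf{Quadratic lower bound.} Given positive-definiteness of the normal Hessian, the Morse--Bott lemma in tubular coordinates gives \(\Sec_{g_0}\ge c\,\dist^2\) on a neighbourhood of \(\cZ\). Off that neighbourhood, \(\Sec_{g_0}\) is positive and continuous on a compact set while \(\dist\) is bounded. Combining the two regions yields \(b_0\).
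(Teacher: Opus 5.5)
Your proposal is correct. For the identification of the zero set, the successive kernel count in M\"uter's formula (product term, first bracket \(a\times b\), second bracket \(3u\times Ru\), with common kernels of dimensions \(10,8,6\)), and the Taylor-plus-compactness lower bound, it is the paper's argument.

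Two small points on that part. First, M\"uter's formula also contains a further nonnegative summand. So the Hessian is only bounded below by the sum of the bracket Hessians; it is not equal to it. This lower bound is all your kernel argument needs, since \(T\cZ\) lies in the kernel anyway. Second, the computation you defer is uniform and needs no case split. On the common kernel, vary \(r=a+bu\) in a direction \(w\perp u\) by moving the base point. Then \(D(u\times Ru)[w]=2(aI+bJ_u)w\), with \(J_uw=u\times w\), and this is invertible on \(u^\perp\) for every \(a^2+b^2=1\), including \(r=\pm1\). So the diagonal and antidiagonal need no separate treatment. At a type-II zero, variations of \(r_0\) and \(v\cdot u\) give the independent images \(2v\) and \(2u\times v\).

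Where you genuinely differ from the paper is flatness and total geodesy. The paper stays on \(M\). The torus \(C_L\times C_L\) is the fixed set of the diagonal action of the element of \(SO(4)\) equal to \(+I\) on \(L\) and \(-I\) on \(L^\perp\). The torus \(C_L\times C_{L^\perp}\) is the fixed set of that map composed with the antipodal map on the second factor. Fixed sets of isometries are totally geodesic. Flatness then follows because restricting and inverting the cometric gives the constant matrices \eqref{eq:torus-matrices}.

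Your lift through \((p,q,g_1,g_2)\mapsto(g_2pg_1,g_2qg_1)\) also works, but ``suitable group coordinates'' hides the one real check: horizontality must hold along the whole coset, not just at one point. Take left-trivialized generators whose first three components lie in \(\R u\) and whose last lies in \(\R\,\Ad_pu\); these commute, so they span an abelian subalgebra. At a point \((pe^{au},qe^{bu},e^{cu},e^{d\,\Ad_pu})\) of the coset, the horizontality conditions involve only \(\Ad_{e^{cu}}u=u\), \(\Ad_{pe^{au}}u=\Ad_pu\), and \(\Ad_{qe^{bu}}u=\Ad_qu=\pm\Ad_pu\). They are therefore the same linear conditions at every point precisely because \(Ru=\pm u\). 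With this check, \(\pi\) is a local isometry from a horizontal, flat, totally geodesic torus onto the given torus, and the \(A\)-tensor is not needed.

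The paper's route is shorter and uses only the isometry group of \(g_0\). Yours explains the flat tori through the same Cheeger structure that drives the Hessian computation.
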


\begin{proof}
If \(Ru=u\), then \(r\in\Span\{1,u\}\), and both \(p,q\) lie in
the plane \(L=\Span\{p,pu\}\). If \(Ru=-u\), then \(r_0=0\) and
\(v\perp u\); hence \(q,qu\in L^\perp\). This identifies
\eqref{eq:zero-criterion} with the two incidence descriptions.
The point and lifted plane recover \(L\) smoothly: the projection of
\(P_u\) to the first tangent factor has constant rank one, and adjoining
the vector \(p\) recovers \(L\). Thus these compact incidence
bundles embed in \(\Gr_2(TM)\). Each has base dimension four and fiber
dimension two. The signs in \eqref{eq:zero-criterion} are mutually
exclusive, so the components are disjoint.

The metric is invariant under the diagonal \(SO(4)\)-action and under
either factor's antipodal map. The orthogonal map equal to \(+I\) on
\(L\) and \(-I\) on \(L^\perp\) belongs to \(SO(4)\); its diagonal
action fixes \(C_L\times C_L\). Composing its action on the second factor
with the antipodal map fixes \(C_L\times C_{L^\perp}\).
These fixed sets are totally geodesic.
Restricting the cometric
\eqref{eq:background-cometric} to the common-axis directions and inverting
the two-dimensional blocks gives the constant matrices
\(G_{\mathrm I},G_{\mathrm{II}}\) in
\eqref{eq:torus-matrices}, with the stated determinants.
 Constancy
proves flatness. Reversing one great-circle coordinate changes the sign
of the off-diagonal entry but not the determinant.

For the Hessian calculation we use the nonnegative terms in
M\"uter's formula~\cite[Proposition~1.3]{Ziller2009}; compare the
one-deformation estimate in \cite[Appendix~B]{BrendleHung2026}.
Here we track both bracket differentials in the full base-and-plane
bundle. All terms must be compared in the same variables. Let \(C_t\) be the positive metric operator defined by
\(g_t(V,W)=g_{\mathrm{old}}(C_tV,W)\). For a Cheeger deformation, the map
\(\Psi_t:\Gr_2(TM)\longrightarrow\Gr_2(TM),\quad (x,\sigma)\longmapsto(x,C_t^{-1}\sigma)\)
is a smooth diffeomorphism covering the identity of \(M\). Pull back the
second curvature formula by the composition of the two such maps, and
substitute the first curvature formula into its old-curvature term.
Locally choosing a smooth basis of each plane gives a sum of nonnegative
terms, up to positive smooth Gram factors. At a zero of a squared norm
\(\|B\|^2\), its Hessian is \(2(\dd B)^*\dd B\); positive factors do
not change its kernel. We check each new bracket differential on the
common kernel of the preceding terms.

For the product metric, a mixed plane has a basis \((a,0),(0,b)\).
Its graph variations have four components that move these two axes
within their own factors, and four that mix the factors. The latter
give the positive quadratic terms
\(\|a\times\delta Y_1\|^2+\|\delta X_2\times b\|^2\), up to positive
constants. Thus the product Hessian has rank four, with kernel the
tangent space to the ten-dimensional mixed-plane locus.
On that locus the first group bracket is a nonzero multiple of
\(a\times b\). At parallel axes its differential has rank two in the
relative-axis directions. Its kernel consists of the common-axis
variations and all base-point directions, giving the eight-dimensional
zero locus of \(g_1\), parametrized by \((p,q,[u])\) and the plane
\(P_u\). The first reparametrization fixes each such plane, so these
are also coordinates for the remaining common kernel.

On this kernel the second group bracket has the same differential rank
as \(\Phi(r,u)=u\times\Ad_r u\). At a type-I zero
\(r=(a,bu)\), \(a^2+b^2=1\), varying \(v\) by \(w\perp u\) gives
\[
 D(u\times Ru)[w]=2(aI+bJ_u)w,\qquad J_uw=u\times w.
\]
This map is invertible on \(u^\perp\), including when \(r=\pm1\).
At a type-II zero, variations of \(r_0\) and \(v\cdot u\) give the
independent images \(2v\) and \(2u\times v\). The second bracket thus
has rank two on the remaining kernel everywhere. The successive common
kernels have dimensions \(10,8,6\), respectively. The known six-dimensional
zero locus is contained in the kernel of the full curvature Hessian, while
these nonnegative terms already give rank eight. Hence the full kernel
is exactly \(T\cZ\), and the Hessian is positive definite on a normal
complement. Transport by the two diffeomorphisms above gives the same
conclusion in the original variables.

Taylor's theorem gives a uniform quadratic lower bound on a tube about
the compact zero set. On the compact complement, curvature has a positive
minimum. Decreasing the constant gives \eqref{eq:clean-lower-bound} on the
whole Grassmann bundle.
\end{proof}

The intermediate metric \eqref{eq:intermediate} also has the quotient
description in the introduction. The quotient of the product bi-invariant
metric on \(SU(2)^3\) has, in the coordinates
\([g_1,g_2,g_3]\mapsto(g_1g_3^{-1},g_2g_3^{-1})\), matrix
\(\frac13\left(\begin{smallmatrix}2I&-I\\-I&2I\end{smallmatrix}\right)\).
Thus \(g_1/3\) is precisely this quotient metric.

The ranks of the curvature Hessian restricted to the eight fixed-base
plane directions are
\[
\begin{array}{c|c|c}
 \text{zero-plane locus}&\text{fixed-base rank}
     &\text{additional base directions}\\ \hline
 \cZ_{\mathrm I}^{\mathrm{reg}}&8&0\\
 \cZ_{\mathrm I}\text{ over }(\Delta_+\cup\Delta_-)&6&2\\
 \cZ_{\mathrm{II}}&7&1
\end{array}
\]
Indeed, after the six directions supplied by the first deformation,
the remaining fixed-base variations are \(u\mapsto u+w\), \(w\perp u\).
For \(\Phi(u)=u\times Ru\), their differential is
\[
 D\Phi(w)=
 \begin{cases}
  J_u(R-I)w,&Ru=u,\\
  J_u(R+I)w,&Ru=-u.
 \end{cases}
\]
The first map has rank two off \(\Delta_+\cup\Delta_-\) and rank zero
on it; the second has rank one, since \(R\) is a half-turn. The base
variations used in Proposition~\ref{prop:clean-zero} supply the missing
directions. Appendix~\ref{app:normal-complements} gives explicit transverse
slices at the exceptional representatives.

\subsection{The normal-minimum formulas}\label{sec:normal-proofs}

\begin{proof}[Proof of Lemma~\ref{lem:normal-minimum}]
We use the implicit-function argument of
\cite[Lemma~2.1]{BrendleHung2026}, uniformly over the compact zero set.
Uniform positivity of the normal Hessian and the implicit function theorem
give a smooth critical section \(\nu_\tau(z)\), with
\(\nu_\tau=\tau\eta_1+O(\tau^2)\). On a sufficiently small fixed tube
the normal Hessian remains positive for small \(\tau\), so this critical
section is the normal minimum. Differentiating the critical equation gives
\(A_z\eta_1+b_z=0\). Substitution gives
\eqref{eq:schur-reduction}. In the cubic expansion, every term involving
the second displacement has the factor \(A_z\eta_1+b_z\); these terms
cancel, leaving \eqref{eq:cubic-reduction}. Compactness makes the
remainders uniform.

If \(q^{-1}(0)\) is nonempty, choose a neighborhood of it in \(Z\) on
which \(r\) has a positive lower bound. There the cubic term dominates the
fourth-order remainder. On the compact remainder of \(Z\), the positive
minimum of \(q\) dominates the higher terms. The same conclusion holds
immediately if \(q^{-1}(0)\) is empty. Positivity of each normal minimum
then gives positivity on the tube. On the compact complement, the positive
minimum of \(F_0\) persists.
\end{proof}

\begin{proof}[Proof of Lemma~\ref{lem:change-section}]
We use the coordinate-invariance principle of
\cite[Proposition~2.9]{BrendleHung2026}, and also record the effect
of the varying Gram determinant.
The Hessian of \(F_0\) has kernel \(TZ\), and \(\dd F_1\) annihilates
\(TZ\). Thus \eqref{eq:schur-reduction} defines a unique class of first
displacements modulo \(TZ\), and its value \(q\) is independent of the
chosen complement. If a second choice adds \(t\in T_zZ\), its footpoint
is \(z+\tau t+O(\tau^2)\). The reduced value is therefore
\(\tau^2q(z)+\tau^3\bigl(r(z)+\dd q_z[t]\bigr)+O(\tau^4)\).
This proves the asserted invariance. The cancellation of the second
displacement is the stationarity cancellation in
\eqref{eq:cubic-reduction}.

To compare normalizations along a fixed first minimizing curve, write
\[
 N_\tau=\tau^2n_2+\tau^3n_3+O(\tau^4),\qquad
 D_\tau=D_0+\tau d_1+O(\tau^2),\qquad D_0>0.
\]
Then
\([\tau^3]\frac{N_\tau}{D_\tau} =\frac{n_3}{D_0}-\frac{n_2d_1}{D_0^2}\).
Thus, when \(n_2=0\), two denominators with the same positive
zeroth-order value give the same cubic coefficient. The factor
\(1/D_0\) remains. The assumption \(q=0\) is precisely the vanishing
of the second numerator coefficient along the minimizing curve.
\end{proof}

Let \(D_h\) denote the first variation of the Levi--Civita connection.
Differentiating the Koszul formula, as in
\cite[Appendix~A]{BrendleHung2026}, gives
\begin{equation}\label{eq:connection-variation}
 2g_0(D_h(X,Y),W)
 =(\nabla_Xh)(Y,W)+(\nabla_Yh)(X,W)-(\nabla_Wh)(X,Y).
\end{equation}
At a point of \(\cZ_{\mathrm I}^{\mathrm{reg}}\), after diagonal
\(SO(4)\)-normalization,
choose \(X=E_i\), \(Y=F_i\) and the
transverse frame \(N=(E_j,E_k,F_j,F_k)\). Let \(A\) be the Hessian of
the unnormalized background curvature in the eight coefficients of
\(X+a\cdot N,Y+b\cdot N\). Let \(r_h\) be the gradient in those
coefficients of the first metric variation of this same
\emph{unnormalized curvature numerator}, evaluated at \(a=b=0\).
For \(h_1,h_2\in\ker\cL\),
\begin{equation}\label{eq:quadratic-connection-formula}
\begin{aligned}
 D_{\mathrm I}\cQ(h_1,h_2)={}&
 g_0(D_{h_1}(X,Y),D_{h_2}(X,Y)) -\tfrac12g_0(D_{h_1}(X,X),D_{h_2}(Y,Y))\\
 &-\tfrac12g_0(D_{h_2}(X,X),D_{h_1}(Y,Y))
 -\tfrac12r_{h_1}^TA^{-1}r_{h_2}.
\end{aligned}
\end{equation}
This is the polarized metric second-variation formula
\cite[Appendix~A]{BrendleHung2026}, followed by
\eqref{eq:schur-reduction} and division by \(D_{\mathrm I}\). In particular it includes the loss from
moving the plane. At degenerate fixed-base points the definition
\eqref{eq:metric-reduction}, with a full base-and-plane normal
complement, replaces this particular coordinate formula.

\subsection{First variations and separation of the two zero families}
\begin{lemma}\label{lem:first-null-tensors}
The smooth tensors \(H_0,O,B,C\) satisfy
\(\cL H_0=\cL O=\cL B=\cL C=0\) on \(\cZ\).
The two height summands \(O_p,O_q\) in \eqref{eq:odd-tensor}
separately satisfy \(\cL O_p=\cL O_q=0\).
\end{lemma}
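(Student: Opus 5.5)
The plan is to compute \(\cL\) on every torus of \(\cZ\) through the linearization \eqref{eq:torus-linearization}. By Proposition~\ref{prop:clean-zero}, each torus in \eqref{eq:plus-incidence}--\eqref{eq:minus-incidence} is flat and totally geodesic, with constant induced metric in round-unit-speed coordinates \(s,t\). By Strake's restriction principle, \(\cL k\) on such a torus depends only on the three restricted functions
\[
k(X,X),\qquad k(X,Y),\qquad k(Y,Y),\qquad X=\partial_s,\ Y=\partial_t .
\]
It then suffices to show that, for each of the listed tensors, these restrictions have the following separated form: \(k(X,X)\) depends on \(s\) alone, \(k(Y,Y)\) depends on \(t\) alone, and \(k(X,Y)=a(s)+b(t)\). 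In that case \(\partial_s\partial_tk(X,Y)=\partial_s^2k(Y,Y)=\partial_t^2k(X,X)=0\), so \eqref{eq:torus-linearization} gives zero. The positive factor \(1/D\) plays no role.

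First I parametrize the tori explicitly. By the diagonal \(SO(4)\)-invariance used in Proposition~\ref{prop:clean-zero}, every element of \(\cZ\) lies on a torus \(C_L\times C_{L'}\) with \(L'=L\) (type I) or \(L'=L^\perp\) (type II). By \eqref{eq:zero-criterion} both factors are traversed with a common unit axis \(u\). Thus \(p(s)=p_*e^{su}\) and \(q(t)=q_*e^{tu}\), so \(X=E_u\) and \(Y=F_u\) are left-invariant fields with the constant parameter \(u\). Consequently every left Maurer--Cartan form \(\alpha_i,\beta_i\) is constant on \(X\) and \(Y\). The right forms are also constant there: \(\widehat\alpha(X)=\Ad_{p(s)}u=\Ad_{p_*}\Ad_{e^{su}}u=\Ad_{p_*}u\), and similarly for \(\widehat\beta(Y)\). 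Since \(\widehat\alpha(Y)=0\) and \(\widehat\beta(X)=0\), the tensors \(B\) and \(C=\alpha_1^2\) have constant restrictions, and so \(\cL B=\cL C=0\).

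For \(H_0\) I split into the two types. On type I, \(r=p^{-1}q\in\Span\{1,u\}\), so \(v\parallel u\) and \(P(v)u=0\). The diagonal blocks of \(H_0\) vanish, hence \(H_0(X,X)=H_0(Y,Y)=H_0(X,Y)=0\) along the whole torus. On type II, \(r_0=0\) identically on the torus, so the restriction vanishes again.

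For \(O_p\), the restrictions are \(O_p(X,X)=p_0(s)\), \(O_p(X,Y)=-\tfrac27p_0(s)\) and \(O_p(Y,Y)=0\), because \(|u|=1\) and the blocks are multiples of \(I\). Here \(p_0\) is a function of \(s\) only, since \(p\) depends only on \(s\). Symmetrically, \(O_q\) has restrictions \(0\), \(-\tfrac27q_1(t)\) and \(q_1(t)\). Both have the separated form above, so \(\cL O_p=\cL O_q=0\), and by linearity \(\cL O=0\).

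The main obstacle is justifying cleanly that \(X=E_u\) and \(Y=F_u\) with one constant \(u\) on the entire torus, uniformly over both types. This includes the type-I tori through \(\Delta_\pm\) and the observation that \(\Ad_p u\) stays constant along \(C_L\). I would handle it directly from the incidence description: \(C_L=\{p_*e^{su}\}\) whenever \(L=\Span\{p_*,p_*u\}\). For type II, the proof of Proposition~\ref{prop:clean-zero} shows \(q,qu\in L^\perp\), so \(C_{L^\perp}=\{q_*e^{tu}\}\) with the same axis \(u\).
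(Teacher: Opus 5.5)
Your proof is correct and follows essentially the same route as the paper. Both arguments restrict each tensor to the flat, totally geodesic tori and show that every term in \eqref{eq:torus-linearization} vanishes. For $B$ and $C$, the left and right Maurer--Cartan forms are constant on $E_u,F_u$; for $O_p,O_q$, the tangential entries are $p_0(s)$, $q_1(t)$ and $-\tfrac27\bigl(p_0(s)+q_1(t)\bigr)$; and $H_0$ restricts to zero because $P(v)u=0$ on type~I and $r_0=0$ on type~II. Your parametrization $p=p_*e^{su}$, $q=q_*e^{tu}$ with $\Ad_{e^{su}}u=u$ spells out the constancy of the great-circle axes that the paper asserts in one line, and the paper additionally remarks that smoothness follows from the global formulas.
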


\begin{proof}
We use the flat-torus restriction argument of
\cite[Proposition~5.1]{BrendleHung2026}, checking both zero families here.
Smoothness is immediate from the global formulas. On a type-I torus the
left great-circle axes and right great-circle axes are each constant.
The same is true on a type-II torus, with opposite right axes on its two
factors. Thus the pullbacks of \(B\) and \(C\) have constant tangential
components in the round-unit-speed circle coordinates, and
\eqref{eq:torus-linearization} vanishes.
For \(O\), its tangent entries in the frame \(E_u,F_u\) are
\(O_{ss}=p_0(s),\quad O_{tt}=q_1(t),\quad O_{st}=-\tfrac27\bigl(p_0(s)+q_1(t)\bigr)\).

Each derivative in \eqref{eq:torus-linearization} is again zero, also
for either summand separately.
On a type-I torus, \(v\) is parallel to \(u\), so \(P(v)u=0\);
on a type-II torus, \(r_0=0\). Hence its restriction vanishes on both
families, as required.
\end{proof}

\begin{proposition}\label{prop:H0-opening}
On \(\cZ_{\mathrm I}\), we have
\begin{equation}\label{eq:H0-radical}
 \cQ(H_0,h)=0\qquad\text{whenever }\cL h=0.
\end{equation}

On the second family, we have
\begin{equation}\label{eq:H0-minus-positive}
 \cQ(H_0)=\frac{282877}{2278125}>\frac1{10}.
\end{equation}

\end{proposition}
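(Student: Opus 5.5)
Two separate arguments are needed: a structural one for \eqref{eq:H0-radical} and an explicit computation for \eqref{eq:H0-minus-positive}.

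\textbf{The radical identity.} The plan is to show that \(H_0\) contributes nothing to the quadratic data on \(\cZ_{\mathrm I}\), term by term in the polarized formula \eqref{eq:quadratic-connection-formula}. On \(\cZ_{\mathrm I}\) we have \(v\parallel u\), so \(P(v)u=0\). Hence \(H_0(X,\cdot)\) and \(H_0(Y,\cdot)\) vanish along the whole torus for \(X=E_u\), \(Y=F_u\).

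Moreover \(H_0\) is a smooth function of \(r\) and vanishes identically on the type-I incidence set. Indeed, \(r_0P(v)\) vanishes whenever \(v\parallel u\), the common axis, and the axis condition \(Ru=u\) is exactly what defines the set. So \(g_0+\tau H_0\) preserves each type-I torus as a flat totally geodesic torus; this is the invariance remark of Section~\ref{sec:H0-choice}. I would prove it using the reflection symmetry from the proof of Proposition~\ref{prop:clean-zero}. The map equal to \(+I\) on \(L\) and \(-I\) on \(L^\perp\) preserves \(H_0\), because \(H_0\) is built from \(r\) and is diagonal-\(SO(4)\)-equivariant. So the torus remains the fixed set of an isometry of every metric in the path, hence totally geodesic, and its induced metric is unchanged.

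Consequently \(D_{H_0}(X,Y)\), \(D_{H_0}(X,X)\) and \(D_{H_0}(Y,Y)\), computed via \eqref{eq:connection-variation}, vanish: their tangential parts vanish because the induced metric is unchanged, and their normal parts vanish because the second fundamental form stays zero. Also \(r_{H_0}=0\), since the first variation of the numerator is invariant under the reflection while the transverse coordinates are odd. All four terms of \eqref{eq:quadratic-connection-formula} with \(h_1=H_0\) therefore vanish on \(\cZ_{\mathrm I}^{\mathrm{reg}}\).

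Over \(\Delta_\pm\) the coordinate formula degenerates. There I would use \eqref{eq:metric-reduction} directly with a full base-and-plane normal slice (Appendix~\ref{app:normal-complements}). The same reflection argument shows that the mixed first variation \(b_z\) of \(H_0\) vanishes and that \([\tau^2]\) of \(g_0+\tau(H_0+h)\) is additive at the zero set. Alternatively, polarization is continuous in \(z\), so density of \(\cZ_{\mathrm I}^{\mathrm{reg}}\) suffices. Continuity is the cleaner route, and I would take it.

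\textbf{The value on type II.} By diagonal \(SO(4)\)-invariance, together with the symmetry swapping factors and the antipodal map, \(\cQ(H_0)\) is constant on \(\cZ_{\mathrm{II}}\). So it suffices to evaluate at one representative. I would take \(p=1\), \(q=j\), \(u=i\), so that \(r=j\), \(r_0=0\) and \(v=j\).

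At this point \(H_0=0\), but \(\dd r_0\neq0\) in the base direction transverse to \(\langle p,q\rangle=0\). So \(\nabla H_0=\dd r_0\otimes\left(\begin{smallmatrix}0&P(v)\\P(v)&0\end{smallmatrix}\right)\), and this determines \(D_{H_0}\) through \eqref{eq:connection-variation}. Then assemble the Schur-reduced second coefficient \(F_2-\tfrac12b^TA^{-1}b\) in \eqref{eq:schur-reduction}. Here \(A\) is the background Hessian on the eight-dimensional normal slice, consisting of the seven fixed-base plane directions and the one base direction from the table. The vector \(b\) is the gradient of the first variation, and division by \(D_{\mathrm{II}}=15/14\) gives the normalized value.

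This is a finite rational linear-algebra computation. I expect the main obstacle to be computing the background Hessian \(A\) correctly in the base direction, where the metric \(g_0\) itself varies through \(R\), together with handling the plane transport in the mixed base/plane entries. I would do this symbolically, which is the appendix code, to obtain \(282877/2278125\approx0.1242>1/10\).
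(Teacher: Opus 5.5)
Your proposal is correct and follows the paper's proof. On type~I, diagonal $SO(4)$-invariance of $H_0$ keeps each torus flat and totally geodesic, so $D_{H_0}$ vanishes on tangent pairs and the first-order plane gradient vanishes; you get the latter from evenness under the reflection fixing the torus, the paper from flatness plus the fact that plane directions and $T\cZ_{\mathrm I}$ span, with continuity across $p=\pm q$ in both. On type~II you reduce to $p=1$, $q=j$, $u=i$ with $\dd H_0=\dd r_0\otimes(\cdot)$ and a Schur subtraction over the seven-plus-one normal slice divided by $D_{\mathrm{II}}$, exactly as the paper does; the paper also records $D_{H_0}(E_i,F_i)=-\tfrac12\operatorname{grad}_{g_0}r_0$ and $|\dd r_0|_{g_0}^2=2/3$, giving the numerator $1/6$ before subtracting $5107/151875$. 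Two small corrections: $r_0P(v)$ does not vanish when $v\parallel u$ (only $P(v)u=0$, so it is the restriction of $H_0$ to the torus that vanishes), and diagonal $SO(4)$ alone is already transitive on $\cZ_{\mathrm{II}}$, since all adapted frames $(p,pu,q,qu)$ have the orientation of $(1,i,j,-k)$.
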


\begin{proof}
The tensor \(H_0\) is diagonal \(SO(4)\)-invariant. Every type-I torus
therefore remains totally geodesic for \(g_0+\tau H_0\), with unchanged
flat induced metric. For the commuting round-unit-speed coordinate fields
\(X,Y\) on such a torus, the ambient derivatives
\(\nabla^{g_0+\tau H_0}_XX\), \(\nabla^{g_0+\tau H_0}_XY\), and
\(\nabla^{g_0+\tau H_0}_YY\) vanish for every small \(\tau\). Hence
\(D_{H_0}(X,X)=D_{H_0}(X,Y)=D_{H_0}(Y,Y)=0 \quad\text{as ambient vectors}\).
Flatness and total geodesicity also give zero curvature differential
in the plane directions. At points of \(\cZ_{\mathrm I}^{\mathrm{reg}}\),
these directions together with \(T\cZ_{\mathrm I}\) span the ambient
tangent space. Since \(F_{1,H_0}|_{\cZ}=0\), it follows that
\(\dd F_{1,H_0}=0\) there, and continuity extends this to all of
\(\cZ_{\mathrm I}\). Formula
\eqref{eq:quadratic-connection-formula} proves \eqref{eq:H0-radical}.

The diagonal \(SO(4)\)-action is transitive on the second zero family
and preserves \(H_0\), so it suffices to use \(p=1,q=j,u=i\).
At this point \(H_0=0\) and
\[
 \dd H_0=(\alpha_2-\beta_2)\otimes
       \begin{pmatrix}0&P(j)\\P(j)&0\end{pmatrix}.
\]

Consequently \(D_{H_0}(E_i,E_i)=D_{H_0}(F_i,F_i)=0\), while
\(D_{H_0}(E_i,F_i)=-\frac12\operatorname{grad}_{g_0}r_0\).
Since \(|\dd r_0|_{g_0}^2=2/3\), the fixed quadratic numerator is
\(1/6\). The full normal Schur subtraction is
\(5107/151875\), as computed in \eqref{eq:typeII-Schur}; the
calculation includes the base direction changing \(r_0\). Dividing by
\(D_{\mathrm{II}}\) gives
\[
 \cQ(H_0)=\frac{1/6-5107/151875}{15/14}
          =\frac{282877}{2278125}.
\]

Transitivity can also be seen from the adapted orthonormal frames
\((p,pu,q,qu)\): all have the orientation of \((1,i,j,-k)\), and hence
lie in a single \(SO(4)\)-orbit.
\end{proof}

%% file: quadratic.tex
\section{The second-order curvature coefficient}\label{sec:quadratic}

We choose the first two perturbation tensors so that the second-order
coefficient is nonnegative on \(\cZ\), with zero set the tangent planes
to one flat torus. The principal estimate is the following calculation
for the height tensor \(O\); the mixed corrections will be chosen to
preserve it.

\begin{proposition}[Quadratic lower bound for the odd height tensor]\label{prop:odd-margin}
On a type-I torus, choose one oriented round-unit-speed parametrization
\(c:\R/2\pi\Z\to C_L\), write \(p=c(s)\), \(q=c(t)\), and set
\(\theta=t-s\), \(x=\sin^2\theta\).
Define the scalar polynomials
\begin{equation}\label{eq:height-polynomials}
\begin{aligned}
 d(x)&=31752000(25+3x),\\
 P(x)&=12632468+1767741x+5670x^2,\\
 V(x)&=3941932+686019x,\\
 W(x)&=3521524+636555x.
\end{aligned}
\end{equation}

Let \(J\) be the positive quarter-turn on \(L^\perp\), using the
orientation induced by the chosen orientation of \(L\) and the ambient
orientation of \(\R^4\). Then we have
\begin{equation}\label{eq:odd-identity}
\begin{aligned}
 \cQ(O)+\cL K_0={}&
 \frac{P(x)}{2d(x)}\rho
 +\frac{V(x)\cos\theta}{d(x)}\langle e_\perp,f_\perp\rangle\\
 &-\frac{W(x)\sin\theta}{d(x)}
                       \langle Je_\perp,f_\perp\rangle.
\end{aligned}
\end{equation}

In particular, we have
\begin{equation}\label{eq:odd-lower-bound}
 \cQ(O)+\cL K_0\geq\frac1{250}\rho
 \qquad\text{on }\cZ_{\mathrm I}.
\end{equation}

\end{proposition}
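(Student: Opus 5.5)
The proof has two parts: an exact algebraic identity \eqref{eq:odd-identity}, and the scalar inequality \eqref{eq:odd-lower-bound} that follows from it.

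\textbf{Reduction to a normal form.} By diagonal \(SO(4)\)-invariance of \(g_0\), and because the tensors \(O\), \(K_0\) transform covariantly once the height vectors \(e_0,e_1\) are transported, I will normalize a point of \(\cZ_{\mathrm I}\) to \(p=1\), \(q=\cos\theta+i\sin\theta\), \(u=i\), so \(L=\Span\{1,i\}\). The tensors then become quadratic expressions in the transformed orthonormal height vectors \(e,f\). These are the images of \(e_0,e_1\) under the normalizing rotation.

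\textbf{Computing \(\cQ(O)\) and \(\cL K_0\).} Off \(\Delta_\pm\), I will compute \(\cQ(O)\) with the connection formula \eqref{eq:quadratic-connection-formula}. This needs three ingredients:
\begin{itemize}
\item \(D_O(X,Y)\), \(D_O(X,X)\), \(D_O(Y,Y)\), obtained from \eqref{eq:connection-variation} using the Levi--Civita connection of \(g_0\) in the left frame. It is computable from the cometric \eqref{eq:background-cometric} and the brackets \([E_a,E_b]=E_{2a\times b}\).
\item The \(8\times 8\) Hessian \(A\) in the transverse frame \((E_j,E_k,F_j,F_k)\).
\item The gradient \(r_O\).
\end{itemize}
Since \(O\) is linear in \(p_0,q_1\), \(\cQ(O)\) is a quadratic form in the coefficient vector \((e,f)\), with coefficients rational in \(\cos\theta,\sin\theta\). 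Separately, \(\cL K_0\) comes from \eqref{eq:torus-linearization} and is linear in products of components of \(e,f\).

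I then use \(\langle e,f\rangle=0\) and \(|e|=|f|=1\) to eliminate the \(d_0\langle e,f\rangle\) term. The tangential components \(e_L,f_L\) are rewritten via \(|e_L|^2=1-|e_\perp|^2\), and the cancellation equation \eqref{eq:K0-matching} kills the leftover zeroth-order term \(\sin s\cos t\). The result depends only on \(e_\perp,f_\perp\). By the residual \(SO(2)\) symmetry of \(L^\perp\) it must have the stated form \(\frac{P}{2d}\rho+\frac{V\cos\theta}{d}\langle e_\perp,f_\perp\rangle-\frac{W\sin\theta}{d}\langle Je_\perp,f_\perp\rangle\). I will identify \(P,V,W,d\) by symbolic computation, clearing the denominator that comes from \(\det A\), which is proportional to \(25+3x\).

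The points over \(\Delta_\pm\) (\(\sin\theta=0\)) are handled by continuity, because \(\cQ\) is defined intrinsically via Lemma~\ref{lem:normal-minimum} and is smooth on \(\cZ_{\mathrm I}\). The identity holds on the dense regular part and both sides are continuous, so it extends.

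\textbf{The inequality.} Write \(a=e_\perp\), \(b=f_\perp\). By Cauchy--Schwarz,
\[
|V\cos\theta\langle a,b\rangle - W\sin\theta\langle Ja,b\rangle|\le \max\{|V|,|W|\}\,|a||b|.
\]
This holds because \(\langle a,b\rangle^2+\langle Ja,b\rangle^2=|a|^2|b|^2\) in the plane \(L^\perp\), and \((\cos\theta,\sin\theta)\) is a unit vector. Since \(|a||b|\le\rho/2\), the cross terms are at most \(\max\{|V|,|W|\}\rho/(2d)\). The left side is therefore at least \(\frac{P-\max\{|V|,|W|\}}{2d}\rho\), which is the test \eqref{eq:height-positivity-test}.

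It remains to check \(P(x)-V(x)>\tfrac{2}{250}d(x)\) on \([0,1]\); note \(V>W>0\) there. The difference is a quadratic polynomial with nonnegative \(x\)-coefficients: \(P-V-254016(25+3x)=8690536-6628716+(1081722-762048)x+5670x^2\). This is positive, with room, by checking constant and linear coefficients as exact rationals.

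\textbf{Main obstacle.} The main obstacle is the symbolic derivation of the identity, in particular the Schur term \(r_O^TA^{-1}r_O\). The matrix \(A\) couples all eight plane directions, and its inverse introduces the factor \(25+3x\). I would first compute \(A\) at general \(\theta\) in block form, exploiting the \(SO(2)\) symmetry on \(L^\perp\) to reduce it to \(2\times2\) complex blocks. I would then verify the resulting rational identity exactly by computer algebra.
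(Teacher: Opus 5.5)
Your proposal is correct and follows essentially the same route as the paper. You normalize by the diagonal $SO(4)$-action to $p=1$, $q=\cos\theta+i\sin\theta$, and obtain the identity as an exact rational computation from \eqref{eq:quadratic-connection-formula} and \eqref{eq:torus-linearization}, with $\langle e,f\rangle=0$ killing the $d_0$ term and continuity covering $\theta=0,\pi$. You then bound the cross terms by $\max\{V,W\}\,|e_\perp||f_\perp|/d\le\max\{V,W\}\,\rho/(2d)$.

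Your closing check is pointwise in $x$, whereas the paper uses cruder endpoint bounds to get $8004517/1778112000>1/250$. There is a small arithmetic slip in that check: $254016\cdot 25=6350400$, so the constant term of $P-V-254016(25+3x)$ is $2340136$, not $8690536-6628716$. The correct value only strengthens your conclusion.
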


\begin{remark}
    The polynomials in \eqref{eq:height-polynomials} give the coefficients
    in \eqref{eq:odd-coefficient-matrices} after cancellation~\eqref{eq:K0-matching}.
\end{remark}

\begin{proof}
As in \cite[Proposition~5.2]{BrendleHung2026}, we expand the second
coefficient in the tensor amplitudes. Here the required blocks are
\eqref{eq:odd-coefficient-matrices} on the type-I family.
Use the diagonal \(SO(4)\)-action to put
\(p=1\), \(q=\cos\theta+i\sin\theta\). The two fixed height vectors
become arbitrary orthonormal vectors \(e,f\in\R^4\).
Formula \eqref{eq:quadratic-connection-formula} is quadratic in these
height vectors. The two self-coefficient matrices and the corrected
cross-coefficient matrix are given explicitly in
\eqref{eq:odd-coefficient-matrices}. They give
\eqref{eq:odd-identity}; the scalar multiple of
\(\langle e,f\rangle\) vanishes because \(e\perp f\). This calculation is
an exact identity over \(\Q(z)\), with
\(\cos\theta=(1-z^2)/(1+z^2)\) and
\(\sin\theta=2z/(1+z^2)\). Smoothness of the reduced coefficient extends
the identity to \(\theta=0,\pi\). Reversing the orientation of \(L\)
changes both \(J\) and \(\sin\theta\) in sign, so the formula is
independent of that choice.

For \(0\leq x\leq1\), the positive coefficients in
\eqref{eq:height-polynomials} give the endpoint bounds
\begin{equation}\label{eq:height-polynomial-bounds}
\begin{gathered}
 P(x)\geq12632468,\qquad
 \max\{V(x),W(x)\}\leq4627951,\\
 0<31752000\cdot25\leq d(x)\leq31752000\cdot28.
\end{gathered}
\end{equation}
The vectors \(e_\perp,Je_\perp\) are orthogonal and have equal length.
Consequently the absolute value of the last two terms in
\eqref{eq:odd-identity} is at most
\(\max\{V,W\}|e_\perp||f_\perp|/d\).
Since \(2|e_\perp||f_\perp|\leq\rho\), the coefficient of \(\rho\) is
bounded below by
\begin{equation}\label{eq:height-numerical-margin}
 \frac{12632468-4627951}{2\cdot31752000\cdot28}>\frac1{250}.
\end{equation}
This proves \eqref{eq:odd-lower-bound}.
\end{proof}

\input{mixed-correction}

\subsection{Preservation of the quadratic lower bound}

The corrections involving \(B,C\) must preserve
\eqref{eq:odd-lower-bound}. Specifically, each function in
\eqref{eq:five-corrections} must vanish together with its first
derivative on \(\Tlift\); for \(K_1\) this is
\eqref{eq:admissible-mixed}, and for \(K_2,K_3\) it follows from
\eqref{eq:BB-cancel} and~\eqref{eq:BC-j}--\eqref{eq:BC-k}.

\begin{proposition}[Vanishing transverse jets]\label{prop:mixed-jets}
Each of the five functions in \eqref{eq:five-corrections} vanishes
on \(\Tlift\), together with its first derivative along
\(\cZ_{\mathrm I}\).
\end{proposition}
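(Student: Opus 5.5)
The proof splits the five functions in \eqref{eq:five-corrections} by their parity under the reflection \eqref{eq:reflection}. This reflection fixes \(\Tlift\) pointwise and reverses its four transverse directions in \(\cZ_{\mathrm I}\). The tensors \(O,C\) are even and \(B\) is odd. The metric \(g_0\), the operators \(\cL,\cQ\), and the normal minimization are all equivariant under this isometry. Hence \(2\cQ(O,C)\), \(\cQ(B)+\cL K_2\), \(2\cQ(B,C)+\cL K_3\), and \(\cQ(C)\) have definite parity, and so does \(\cL K_2\), since \(K_2\) is an invariant multiple of \(g_0\). The first three, \(2\cQ(O,C)\), \(\cQ(B)+\cL K_2\) and \(\cQ(C)\), are even; \(2\cQ(B,C)+\cL K_3\) is odd. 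The same holds for \(K_3=(8/5)(Bg_0^{-1}C+Cg_0^{-1}B)\), which is odd.

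For an even smooth function on \(\cZ_{\mathrm I}\), the differential along the transverse directions at the fixed set \(\Tlift\) vanishes automatically. Its differential along \(\Tlift\) is the derivative of its restriction. So for the three even functions it suffices to show that they vanish identically on \(\Tlift\), and this is the content of \eqref{eq:BB-cancel}. Concretely, I would evaluate \eqref{eq:quadratic-connection-formula} at \(p=1\), \(q=\cos\theta+i\sin\theta\) using the diagonal \(SO(4)\) normalization restricted to the stabilizer of \(L_0\). This is legitimate because \(B,C\) are built from left and right Maurer--Cartan forms, which are equivariant under the \(e^{i\phi}\) left and right multiplications preserving \(T_0\).

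The computation should give three results. First, \(\cQ(O,C)=0\). Second, \(\cQ(C)=0\), since \(C=\alpha_1^2\) is constant in the torus directions and has no first normal displacement by Lemma~\ref{lem:pole-free}. Third, \(\cQ(B)=(128/845)\cos2(t-s)\), which \eqref{eq:K2-matching} cancels exactly against \(\cL K_2=\tfrac43\kappa_2\cos2(t-s)\), computed via \eqref{eq:torus-laplacian}. At the degenerate points \(\theta=0,\pi\), I would not redo the computation. The identity holds as a rational identity in \(z=\tan(\theta/2)\), and smoothness of the reduced coefficient extends it to those points, as in the proof of Proposition~\ref{prop:odd-margin}.

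For the odd function \(2\cQ(B,C)+\cL K_3\), parity gives vanishing on \(\Tlift\) and vanishing of the tangential derivative. It remains to check the four transverse derivatives. The rotations in \(L_0^\perp\), acting on both factors, commute with everything and rotate the two height directions \(j,k\) into each other. So only the two transverse identities \eqref{eq:BC-j} and \eqref{eq:BC-k} are needed. I would compute them by differentiating \eqref{eq:quadratic-connection-formula} along the graph-torus family \(U\) used to define \(K_1\), with \(U\) moving \(e_0\) or \(e_1\) toward \(j\) or \(k\), using the polynomial parametrization. The choice \(\kappa_3=8/5\) in \eqref{eq:K3-matching} is exactly the condition that both derivatives vanish.

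The fifth function is \(S=2\cQ(O,B)+\cL K_1\). Here \eqref{eq:admissible-mixed} is assumed as part of the choice of \(K_1\), and it holds for the explicit \(K_1\) by Proposition~\ref{prop:mixed-existence}. So this case is immediate.

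The main obstacle is the transverse derivative computation for \(\cQ(B,C)\). It has two difficulties. The Schur term \(r_h^TA^{-1}r_h\) must be differentiated, and the normal complement degenerates over \(\Delta_\pm\), where \(A\) loses rank. I would handle this as in the even case: prove the identity on \(\cZ_{\mathrm I}^{\mathrm{reg}}\) as an exact rational identity, then extend it by continuity of the smooth jet.
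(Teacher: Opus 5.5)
\textbf{Assessment.} There is a genuine gap, in the odd function \(2\cQ(B,C)+\cL K_3\). Your handling of the three even functions and of \(2\cQ(O,B)+\cL K_1\) agrees with the paper.

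One small point on the even case: do not cite Lemma~\ref{lem:pole-free} for \(\cQ(C)=0\), because its claim about \(C\) is itself derived from this proposition. Argue instead that \eqref{eq:reflection} preserves \(g_0+\tau C\). Then \(T_0\) stays totally geodesic with constant induced metric \(G_{\mathrm I}+\tau\operatorname{diag}(1,0)\).

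\textbf{The false symmetry reduction.} For the odd function you assume that rotations of \(L_0^\perp\) ``commute with everything.'' They do not preserve \(B\). The rotation by \(\varphi\) of \(L_0^\perp\) is diagonal conjugation by \(e^{i\varphi/2}\). It fixes \(\alpha_1,\widehat\alpha_1,\beta_1,\widehat\beta_1\) but rotates \((\alpha_2,\alpha_3)\) and \((\beta_2,\beta_3)\). So it carries \(B\) to \(\cos\varphi\,B\) plus a multiple of the analogous tensor built from \(\alpha_3,\beta_3\). For \(\varphi=\pi\) this rotation is exactly the reflection \eqref{eq:reflection}, under which you yourself say \(B\) is odd. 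Had the invariance held, the transverse derivative would vanish for every \(\kappa_3\), since a linear functional on \(\operatorname{Hom}(L_0,L_0^\perp)\) invariant under these rotations must be zero. Then \eqref{eq:K3-matching} would be vacuous.

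\textbf{The missing idea: equivariance.} The identities \eqref{eq:BC-j}--\eqref{eq:BC-k} are not transverse derivatives of \(2\cQ(B,C)\) along graph tori. They are values on \(\Tlift\) of \(2\cQ(B,D_j)\) and \(\cL S(B,D_j)\) and their \(D_k\) analogues, where \(D_j=2\alpha_1\odot\alpha_2\), \(D_k=2\alpha_1\odot\alpha_3\), and \(S(U,V)=Ug_0^{-1}V+Vg_0^{-1}U\). They become relevant only through equivariance, which your proposal lacks.
Let \(\Phi_a\) be the diagonal \(SO(4)\)-flow of \(A_{02},A_{03},A_{12},A_{13}\). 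Since \(\Phi_a\) preserves \(g_0\) and \(\cZ\), the derivative of \(2\cQ(B,C)+\cL K_3\) along the induced flow on \(\cZ_{\mathrm I}\) equals \(2\cQ(\dot B,C)+2\cQ(B,\dot C)+\tfrac85\bigl(\cL S(\dot B,C)+\cL S(B,\dot C)\bigr)\). Every term here is evaluated on \(\Tlift\) itself.
\begin{itemize}
\item From \(\dot\alpha=-[b,\alpha]\) one gets \(\dot C\in\{-D_k,D_j,-D_j,-D_k\}\) for the four flows.
\item \(\cQ(\dot B,C)=0\), because \(D_C\) vanishes on tangent pairs of \(T_0\) and \(\dd F_{1,C}=0\) on \(\Tlift\).
\item \(\cL S(\dot B,C)=0\), because \(C\) annihilates the normal space and \(\dot B\) has constant tangential coefficients.
\item The remaining terms are given by \eqref{eq:BC-j}--\eqref{eq:BC-k}, and \(128/325-\tfrac85\cdot\tfrac{16}{65}=0\) finishes the argument.
\end{itemize}
These four flows span \(\operatorname{Hom}(L_0,L_0^\perp)\) at every point of \(\Tlift\), including over \(p=\pm q\). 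So this route removes the obstacle you flag: nothing, in particular not the Schur term, has to be differentiated in base directions off \(\Tlift\).

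Your direct differentiation along graph tori could in principle replace this argument. As written, however, it is not carried out, it relies on the false symmetry reduction, and the identities you cite do not supply its output.
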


\begin{proof}
The reflection \eqref{eq:reflection} fixes \(\Tlift\) pointwise and
reverses the four transverse directions in
\(\cZ_{\mathrm I}\). Under this reflection \(O,C,K_0,K_2\) are even,
whereas \(B,K_1,K_3\) are odd.
The exact identities on \(\Tlift\) are
\begin{equation}\label{eq:BB-cancel}
 \cQ(B)=\frac{128}{845}\cos2\theta,\qquad
 \cL K_2=-\frac{128}{845}\cos2\theta,
 \qquad \cQ(C)=\cQ(O,C)=0.
\end{equation}

Thus the even functions in \eqref{eq:five-corrections} have both zero
value and zero first transverse derivative.

For the \(O\)-\(B\) assertion, use condition~\eqref{eq:admissible-mixed}.
For the remaining odd function,
parity gives zero value but not zero derivative.
Set
\(D_j=2\alpha_1\odot\alpha_2,\quad D_k=2\alpha_1\odot\alpha_3,\quad S(U,V)=Ug_0^{-1}V+Vg_0^{-1}U\).
Let \(A_{\mu\nu}\) send \(e_\mu\) to \(e_\nu\), \(e_\nu\) to
\(-e_\mu\), and annihilate the other basis vectors. For any of
\(A_{02},A_{03},A_{12},A_{13}\), let \(\Phi_a\) be its diagonal
\(SO(4)\)-flow and define
\(\dot B=\left.\frac{\dd}{\dd a}\right|_{0}\Phi_a^*B, \quad \dot C=\left.\frac{\dd}{\dd a}\right|_{0}\Phi_a^*C\).
Since \(\Phi_a\) preserves \(g_0\) and \(\cZ\), the operators
\(\cL,\cQ\) are equivariant under this pullback, and the differentiated
tensors belong to \(\ker\cL\).
The derivative along the induced
flow on \(\cZ_{\mathrm I}\) is therefore
\[
\begin{aligned}
 \left.\frac{\dd}{\dd a}\right|_0
 \Phi_a^*\bigl(2\cQ(B,C)+\cL K_3\bigr)
 ={}&2\cQ(\dot B,C)+2\cQ(B,\dot C)
 +\frac85\cL S(\dot B,C)+\frac85\cL S(B,\dot C).
\end{aligned}
\]

For a quaternionic flow \(p\mapsto e^{a\ell}p e^{ab}\), the left
coframe satisfies \(\dot\alpha=-[b,\alpha]\). The four rotations
above give
\[
\begin{array}{c|cccc}
 &A_{02}&A_{03}&A_{12}&A_{13}\\ \hline
 b&j/2&k/2&-k/2&j/2\\
 \dot C&-D_k&D_j&-D_j&-D_k
\end{array}
\]
because \(\dot C=2\alpha_1\odot\dot\alpha_1\).

We first dispose of the terms containing \(\dot B\) and \(C\).
The reflection \eqref{eq:reflection} preserves \(g_0+\tau C\), so
\(T_0\) remains totally geodesic for small \(\tau\). Its induced
metric has constant coefficients \(G_{\mathrm I}+\tau\operatorname{diag}(1,0)\)
in \(s,t\). Consequently, for \(X=E_i\), \(Y=F_i\) along \(T_0\),
\(D_C(X,X)=D_C(X,Y)=D_C(Y,Y)=0 \quad\text{as full ambient vectors}\).
Flatness and total geodesicity also give zero differential in the plane
directions for every metric in this path. Since \(F_{1,C}|_{\cZ}=0\),
these directions together with \(T\cZ\) imply
\(\dd F_{1,C}=0\) on \(\Tlift\cap\cZ_{\mathrm I}^{\mathrm{reg}}\).
Continuity extends this to all of \(\Tlift\). Formula
\eqref{eq:quadratic-connection-formula} now gives
\(\cQ(\dot B,C)=0\) there.

The tangential components of \(\Phi_a^*B\) on \(T_0\) are constant
in \(s,t\), since \(\Phi_a\) maps it to a type-I flat torus. Hence
the same is true of \(\dot B\). Along \(T_0\), the \(g_0\)-normal
space is spanned by \(E_j,E_k,F_j,F_k\), and \(C\) annihilates this
space. It follows that the tangential restriction of \(S(\dot B,C)\)
has constant coefficients as well. Thus
\(\cL S(\dot B,C)=0\) by \eqref{eq:torus-linearization}.

For the remaining terms, exact evaluation gives
\begin{align}
 2\cQ(B,D_j)&=\frac{128}{325}\cos2\theta,&
 \cL S(B,D_j)&=-\frac{16}{65}\cos2\theta,\label{eq:BC-j}\\
 2\cQ(B,D_k)&=\frac{128}{325}\sin2\theta,&
 \cL S(B,D_k)&=-\frac{16}{65}\sin2\theta.
 \label{eq:BC-k}
\end{align}
Thus the coefficient \(8/5\) chosen by \eqref{eq:K3-matching}
cancels these terms for each of the four values of \(\dot C\),
since \(128/325-(8/5)(16/65)=0\).

The identities \eqref{eq:BC-j}--\eqref{eq:BC-k} result from
substituting \(B,D_j,D_k\) into
\eqref{eq:quadratic-connection-formula} and~\eqref{eq:torus-linearization},
using the frame calculus in Appendix~\ref{app:frame-calculus}. The four rotations span
\(\operatorname{Hom}(L_0,L_0^\perp)\). Since \(\Tlift\) is the fiber
over \(L_0\) of the supporting-plane map
\(\cZ_{\mathrm I}\to\Gr_2(\R^4)\), they cover its four transverse
directions, including above \(p=\pm q\). The function vanishes on
\(\Tlift\), so its tangential derivatives vanish too.
\end{proof}

For the functions \(F,G\) in \eqref{eq:FG}, we now justify the
finiteness of \(C_*\) and the amplitude choices in
Section~\ref{sec:amplitude-choice}.
Write \(L_U=\operatorname{graph}U\) in the graph coordinates of
Section~\ref{sec:K1-choice}. Orthogonal projection gives
\(\rho(L_U)=\operatorname{tr}\bigl[U^TU(I_{L_0}+U^TU)^{-1}\bigr] =\|U\|_{\mathrm{HS}}^2+O(\|U\|_{\mathrm{HS}}^4)\).
Proposition~\ref{prop:mixed-jets} and Taylor's theorem give
\(|F|+|G|\leq C\|U\|_{\mathrm{HS}}^2\) near \(\Tlift\), uniformly
in its compact set of angular variables. Hence \(|F|+|G|\leq C'\rho\)
there. On the compact complement of this neighborhood, \(\rho\) has
a positive minimum. Thus \(F,G=O(\rho)\) uniformly on \(\cZ_{\mathrm I}\),
so \(C_*\) in \eqref{eq:Cstar} is finite.

Use \(\eps,h_*,K\) from \eqref{eq:small-tensor-choice}.
Then \(\eps\leq1\), and we have
\begin{equation}\label{eq:plus-margin}
 \cQ(h_*)+\cL K
 \geq\frac{\rho}{250}-\eps(|F|+|G|)
 \geq\frac3{1000}\rho
 \qquad\text{on }\cZ_{\mathrm I}.
\end{equation}
The remaining constants \(M_*,c_*\) and the tensor \(H\) are
those of \eqref{eq:Mstar}--\eqref{eq:H-final}.

\begin{proposition}\label{prop:quadratic-final}
For \(H,K\) in \eqref{eq:small-tensor-choice}--\eqref{eq:H-final}, we have
\begin{equation}\label{eq:q2}
 \Vtwo:=\cQ(H)+\cL K\geq0,\qquad \{\Vtwo=0\}=\Tlift.
\end{equation}
Its first derivative along \(\cZ\) vanishes on \(\Tlift\).
\end{proposition}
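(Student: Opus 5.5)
The proof splits \(\cZ=\cZ_{\mathrm I}\sqcup\cZ_{\mathrm{II}}\) and treats the two components separately. The link between them is the decomposition \(H=h_*+c_*H_0\) and the bilinearity of \(\cQ\) on \(\ker\cL\). By Lemma~\ref{lem:first-null-tensors}, all of \(H_0,O,B,C\) lie in \(\ker\cL\), so \(h_*\) and \(H\) do too, and polarization gives
\[
 \cQ(H)=\cQ(h_*)+2c_*\cQ(H_0,h_*)+c_*^2\cQ(H_0).
\]

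On \(\cZ_{\mathrm I}\), Proposition~\ref{prop:H0-opening} says \(\cQ(H_0,h)=0\) for every \(h\in\ker\cL\). Taking \(h=h_*\) and \(h=H_0\) kills the last two terms, so \(\Vtwo=\cQ(h_*)+\cL K\) there. Then \eqref{eq:plus-margin} gives \(\Vtwo\ge\frac{3}{1000}\rho\ge0\). Since \(\rho\) vanishes on \(\cZ_{\mathrm I}\) exactly on \(\Tlift\), we get \(\{\Vtwo=0\}\cap\cZ_{\mathrm I}\subset\Tlift\).

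For the reverse inclusion and the derivative statement, we expand
\[
 \cQ(h_*)+\cL K=\cQ(O)+\cL K_0+\eps F+\eps^2G
\]
and evaluate on \(\Tlift\). The term \(\cQ(O)+\cL K_0\) vanishes on \(\Tlift\) by \eqref{eq:odd-identity}, since \(\rho=0\) and \(e_\perp=f_\perp=0\) there. Its differential also vanishes: it is nonnegative on \(\cZ_{\mathrm I}\) by \eqref{eq:odd-lower-bound} and zero on \(\Tlift\), so each point of \(\Tlift\) is an interior minimum. The terms \(F\) and \(G\) vanish on \(\Tlift\) together with their first derivatives along \(\cZ_{\mathrm I}\), by Proposition~\ref{prop:mixed-jets} applied to the five summands. (Alternatively, \(0\le\Vtwo\) with \(\Vtwo=0\) on \(\Tlift\) already forces \(\dd\Vtwo|_{\Tlift}=0\) at interior minima.) Together these give \(\Vtwo|_{\Tlift}=0\) and \(\dd\Vtwo|_{\Tlift}=0\) along \(\cZ_{\mathrm I}\). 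Because \(\cZ_{\mathrm{II}}\) is a disjoint component, the differential along \(\cZ\) at points of \(\Tlift\) is just the differential along \(\cZ_{\mathrm I}\).

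On \(\cZ_{\mathrm{II}}\) the bound is a direct estimate using the definition \eqref{eq:Mstar} of \(M_*\) and \eqref{eq:H0-minus-positive}:
\[
 \Vtwo\ \ge\ c_*^2\cQ(H_0)-c_*\|2\cQ(H_0,h_*)\|_\infty-\|\cQ(h_*)+\cL K\|_\infty
 >\frac{c_*^2}{10}-c_*M_*-M_* .
\]
With \(c_*=20M_*\) this equals \(40M_*^2-20M_*^2-M_*>0\) (the sharper form stated in \eqref{eq:typeII-amplitude-bound} is \(20M_*^2-M_*>0\)). So \(\Vtwo\) has no zeros on the second family.

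The only delicate point is that Proposition~\ref{prop:mixed-jets} is stated for the five functions individually. \(F\) and \(G\) are their sums, with \(2\cQ(B,C)+\cL K_3\) and \(\cQ(C)\) grouped into \(G\), and linearity of \(\cL\) justifies this grouping. We must also check that the bilinear expansion of \(\cQ(h_*)\) produces exactly the cross terms \(2\eps\cQ(O,B)+2\eps\cQ(O,C)\) at order \(\eps\) and \(\eps^2(\cQ(B)+2\cQ(B,C)+\cQ(C))\) at order \(\eps^2\), matching \eqref{eq:FG}. A direct expansion confirms this.
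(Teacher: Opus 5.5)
Your proposal is correct and follows essentially the same route as the paper's proof. On \(\cZ_{\mathrm I}\) you use \eqref{eq:H0-radical} to remove the \(H_0\) terms, then \eqref{eq:plus-margin} off \(\Tlift\) and Propositions~\ref{prop:odd-margin} and~\ref{prop:mixed-jets} on \(\Tlift\); on \(\cZ_{\mathrm{II}}\) you use the bound from \eqref{eq:H0-minus-positive} and the definition of \(M_*\); and you get \(\dd\Vtwo|_{\Tlift}=0\) from nonnegativity. One cosmetic slip: \(40M_*^2-20M_*^2-M_*\) and \(20M_*^2-M_*\) are the same quantity, so there is no ``sharper form.''
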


\begin{proof}
By \eqref{eq:H0-radical}, adding \(c_*H_0\) does not change the
quadratic coefficient on \(\cZ_{\mathrm I}\). There
\eqref{eq:plus-margin} is positive off \(\Tlift\), and
Propositions~\ref{prop:odd-margin} and~\ref{prop:mixed-jets} give zero
on \(\Tlift\).
On the second component, \eqref{eq:H0-minus-positive}
gives the lower bound
\begin{equation}\label{eq:typeII-amplitude-bound}
 \cQ(H)+\cL K
 \geq\frac{c_*^2}{10}-c_*M_*-M_*
 =20M_*^2-M_*>0
 \quad\text{on }\cZ_{\mathrm{II}}.
\end{equation}

Finally, a smooth nonnegative function has zero differential at each
point of its zero set.
\end{proof}

%% file: mixed-correction.tex
\subsection{A global mixed correction}\label{sec:mixed-correction}

The interaction of \(O\) with \(B\) need not respect the quadratic margin
near \(\Tlift\). We construct a correction that cancels its first transverse
jet. A tangent--normal tensor component is essential: at \(p=\pm q\),
the motion of the base points does not determine that of the supporting
two-plane.

We verify the tensor specified in Section~\ref{sec:K1-choice}.

\begin{proposition}\label{prop:mixed-existence}
The tensor \(K_1\) in \eqref{eq:geometric-K1} is smooth, has the
parities specified in Section~\ref{sec:K1-choice}, and satisfies
\eqref{eq:admissible-mixed}, including at every pair \(p=\pm q\).
\end{proposition}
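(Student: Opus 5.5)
The proof splits into three tasks: smoothness, the two parities, and the jet conditions \eqref{eq:admissible-mixed}. The jet conditions are reduced, via the linearization \eqref{eq:torus-linearization}, to the primitive equations \eqref{eq:mixed-primitive-equations}.

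\textbf{Smoothness.} In the tubular coordinates \((s,x,t,y)\) near \(T_0\), the functions \(s,t\) are smooth angular coordinates and \(x,y\in N\) are smooth. This holds because \(p\mapsto(s,x)\) is a diffeomorphism of a neighborhood of \(C_{L_0}\) onto \(S^1\times\{|x|<\delta\}\), and the same for \(q\). The kernels \(D_p(s,t),D_q(s,t)\) are smooth in \((s,t)\), since \(L_p,L_q\) are smooth in \((s,t)\) and linear in \(U\), and \eqref{eq:mixed-primitives} only integrates against a bounded weight. The cutoff \(\chi\) is compactly supported inside the tube. Hence \(k_p,k_q\) are smooth symmetric tensors on \(M\), and no division by \(\sin(t-s)\) ever occurs. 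This settles smoothness everywhere, including over \(\Delta_\pm\).

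\textbf{Parities.} The reflection \eqref{eq:reflection} acts by \((s,x,t,y)\mapsto(s,-x,t,-y)\). The antipodal map \((p,q)\mapsto(-p,-q)\) acts by \((s,t)\mapsto(s+\pi,t+\pi)\) with \(x,y\) fixed.
\begin{itemize}
\item \emph{Reflection.} \(O\) is even and \(B\) is odd, and the reflection acts on \(U\) by \(U\mapsto-U\). So \(L_p[-U]=-L_p[U]\) is automatic, and this is compatible with \(D_p\) being unchanged. Each term of \(k_p\) contains exactly one factor of \(y\) or \(\dd y\), so \(k_p\) is odd; likewise \(k_q\).
\item \emph{Antipodal map.} I would use the stated \(\pi\)-antiperiodicity of \(L_p\) in \(s\) and \(\pi\)-periodicity in \(t\). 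These give \(D_p(s+\pi,t+\pi)=-D_p(s,t)\). Combined with \(q_{t+\pi}=-q_t\), \(q'_{t+\pi}=-q'_t\) and \(y\) unchanged, the coefficient \(\langle D_pq_t,y\rangle\) is even under \(t\mapsto t+\pi\). I need to recheck the sign bookkeeping against how the pullback acts on \(\dd y\) and on the orientation of \(\dd t\). The parity is finally fixed by comparing with the required oddness, and \(k_q\) is handled symmetrically.
\end{itemize}

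\textbf{Jet conditions.} Both \(2\cQ(O,B)\) and \(\cL K_1\) vanish on \(\Tlift\), since \(K_1\) vanishes on \(T_0\) to first order in the tangential block. The value condition reduces to \(2\cQ(O,B)|_{\Tlift}=0\), which I would take from reflection parity. The tangential derivatives of \(S\) then vanish automatically, so only the four transverse derivatives in the graph parameter \(U\) remain. For these I will compute \(\frac{\dd}{\dd U}\cL K_1\) on the graph torus \(A_U(C_{L_0}\times C_{L_0})\) using \eqref{eq:torus-linearization}. Two facts enter:
\begin{itemize}
\item The background tori stay flat with constant \(G_{\mathrm I}\), so to first order \(\cL K_1\) is \(\frac1{D_{\mathrm I}}\) times the linearized operator applied to the tangential components of \(K_1\) pulled back by \(A_U\).
\item Pulled back by \((s,t)\mapsto(A_Up_s,A_Uq_t)\), we have \(y=Uq_t+O(U^2)\) and \(\dd y=Uq_t'\,\dd t+O(U^2)\). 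So \(k_p\) pulls back to \(\bigl(\langle D_pq_t,Uq_t\rangle+\langle D_pq_t',Uq_t'\rangle\bigr)\dd t^2=\langle D_p,U\rangle_{\mathrm{HS}}\,\dd t^2=\kappa_p[U]\,\dd t^2\), and \(k_q\) pulls back to \(\kappa_q[U]\,\dd s^2\).
\end{itemize}
This is exactly where the orthonormal basis \(q_t,q_t'\) of \(L_0\) matters, and it is valid at \(p=\pm q\). By \eqref{eq:torus-linearization} the derivative of \(\cL K_1\) is then
\[
-\frac1{2D_{\mathrm I}}\bigl(\partial_s^2\kappa_q+\partial_t^2\kappa_p\bigr),
\]
and \eqref{eq:mixed-primitive-equations} turns this into \(-(L_p+L_q)[U]\), which cancels the derivative of \(2\cQ(O,B)\).

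\textbf{Main obstacle.} I expect two points to need the most care. First, the roles of the variables: \(\kappa_p\) is defined with its primitive in \(s\), but the pulled-back \(k_p\) is a \(\dd t^2\) component, which is differentiated in \(t\). So I must check that the indices pair as \(\partial_s^2\) with \(k(Y,Y)\), and swap the labels if the intended matching is the transpose. Second, the primitive identity itself, which comes from differentiating under the integral twice and using antiperiodicity, must hold at \(U\)-first order uniformly in \((s,t)\).
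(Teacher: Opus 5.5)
Your plan follows the paper's proof step for step: smoothness by differentiating under the compact integral, the reflection and antipodal parities, recovery of \(\kappa_p[U]\) on graph tori through the orthonormal basis \(\{q_t,q_t'\}\) of \(L_0\), and then \eqref{eq:torus-linearization} together with \eqref{eq:mixed-primitive-equations}. However, two steps fail as written.

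\textbf{The antipodal parity.} The simultaneous antipodal map does not fix \(x,y\). Since \(-p=\sqrt{1-|x|^2}\,p_{s+\pi}+(-x)\), it acts by \((s,x,t,y)\mapsto(s+\pi,-x,t+\pi,-y)\), with \(\dd t\) preserved and \(\dd y\mapsto-\dd y\). If you hold \(y\) fixed, your computation makes \(k_p\) even. Settling the sign ``by comparing with the required oddness'' is then circular. With the correct action, \(\langle D_p(s+\pi,t+\pi)q_{t+\pi},-y\rangle=\langle(-D_p)(-q_t),-y\rangle=-\langle D_pq_t,y\rangle\), and the \(\dd t\odot\langle D_pq_t',\dd y\rangle\) term flips in the same way. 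The paper organizes this factor by factor:
\begin{itemize}
\item \((s,x)\mapsto(s+\pi,-x)\) flips only \(D_p\), so \(k_p\) is odd under the first antipodal map;
\item \((t,y)\mapsto(t+\pi,-y)\) flips \(q_t,q_t'\) and \(y,\dd y\) but not \(D_p\), so \(k_p\) is even under the second;
\item \(k_q\) is the mirror image.
\end{itemize}
You also took the periodicities of \(L_p,L_q\) as given. They hold because \(O_p\) is odd only under the first-factor antipodal isometry, \(O_q\) only under the second, and \(B\) is even under both.

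\textbf{The linearization.} Your displayed derivative \(-\frac1{2D_{\mathrm I}}(\partial_s^2\kappa_q+\partial_t^2\kappa_p)\) has the labels transposed. Your worry that the \(\dd t^2\) component is ``differentiated in \(t\)'' misreads \eqref{eq:torus-linearization}: there \(k(Y,Y)\) is hit by \(\partial_s^2\), and \(k(X,X)\) by \(\partial_t^2\). We have \(k_p(X,X)=k_p(X,Y)=0\) and \(k_p(Y,Y)=\kappa_p[U]+O(|U|^2)\). Symmetrically, \(k_q(X,X)=\kappa_q[U]+O(|U|^2)\). These remainders are controlled after two angular derivatives, so
\[
 \cL K_1=-\frac1{2D_{\mathrm I}}\bigl(\partial_s^2\kappa_p+\partial_t^2\kappa_q\bigr)[U]+O(|U|^2)=-(L_p+L_q)[U]+O(|U|^2).
\]
No relabeling is needed, and the swap you contemplate would actually break the construction. \(L_p\) is only \(\pi\)-periodic in \(t\), so it need not have zero \(t\)-mean, and a periodic second primitive in \(t\) need not exist. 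By contrast, \(\pi\)-antiperiodicity in \(s\) is exactly what makes the integral in \eqref{eq:mixed-primitives} a periodic second primitive. That is why \(\kappa_p\) is integrated in \(s\) but placed in the second-factor slots \(\dd t^2\) and \(\dd t\odot\dd y\).

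With these two corrections your argument coincides with the paper's.
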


\begin{proof}
The mixed cancellations parallel those in
\cite[Proposition~5.2]{BrendleHung2026}; here we realize the full
transverse derivative by a global tensor, including at \(p=\pm q\).
Use the graph coordinates and the notation
\(p_s,q_t,N,U,A_U\) of Section~\ref{sec:K1-choice}.
For the two height summands, set
\(\mathscr S_p=2\cQ(O_p,B)\) and
\(\mathscr S_q=2\cQ(O_q,B)\).
These are smooth functions on the full zero family, by
Section~\ref{sec:reduction}. Their reflection oddness gives zero on
\(\Tlift\). Write their first graph derivatives as
\(\mathscr S_p(s,t,U)=L_p(s,t)[U]+O(|U|^2),\quad \mathscr S_q(s,t,U)=L_q(s,t)[U]+O(|U|^2)\).
The remainder estimates hold with angular derivatives. The functional
\(L_p\) is \(\pi\)-antiperiodic in \(s\) and \(\pi\)-periodic in \(t\);
the parities of \(L_q\) are reversed. Indeed, \(O_p\) is odd only under
the first-factor antipodal map, \(O_q\) only under the second, and \(B\)
is even under both.

For a smooth \(\pi\)-antiperiodic function \(f:\R\to\R\), define
\(\mathcal I f(s)=\int_0^\pi(\xi-\pi/2)f(s+\xi)\,\dd\xi\).
Integration by parts and antiperiodicity give
\[
 (\mathcal I f)'(s)=-\int_0^\pi f(s+\xi)\,\dd\xi,\qquad
 (\mathcal I f)''(s)=2f(s).
\]
It follows that the functionals \(\kappa_p,\kappa_q\) defined in
\eqref{eq:mixed-primitives}
satisfy
\begin{equation}\label{eq:mixed-primitive-equations}
 \partial_s^2\kappa_p=2D_{\mathrm I}L_p=\tfrac{15}{4}L_p,
 \qquad
 \partial_t^2\kappa_q=\tfrac{15}{4}L_q.
\end{equation}

All parities are preserved. Smooth parameter dependence follows by
differentiation under a fixed compact-interval integral.
The Hilbert--Schmidt representatives \(D_p,D_q\) are therefore
smooth as well. In the tubular coordinates of
Section~\ref{sec:K1-choice}, take \(k_p,k_q,K_1\) from
\eqref{eq:geometric-K1}.
Periodicity makes these genuine tensors on the neighborhood, and the
cutoff gives a smooth extension by zero.

On the graph torus, with \(X=\partial_s\) and \(Y=\partial_t\),
\(y=Uq_t+O(|U|^2),\quad \dd y(Y)=Uq_t'+O(|U|^2),\quad \dd t(Y)=1+O(|U|^2)\).
Since \(k_p\) has only second-factor covector slots,
\(k_p(X,X)=k_p(X,Y)=0\), whereas
\begin{equation}\label{eq:mixed-jet-recovery}
\begin{aligned}
 k_p(Y,Y)
 &=\langle D_pq_t,Uq_t\rangle
   +\langle D_pq_t',Uq_t'\rangle+O(|U|^2)\\
 &=\langle D_p,U\rangle_{\mathrm{HS}}+O(|U|^2)
 =\kappa_p[U]+O(|U|^2).
\end{aligned}
\end{equation}
The second equality uses the orthonormal basis \(\{q_t,q_t'\}\) of
\(L_0\). The estimates again hold after angular differentiation.
Equations \eqref{eq:torus-linearization} and
\eqref{eq:mixed-primitive-equations} imply
\(\mathscr S_p+\cL k_p=O(|U|^2),\quad \mathscr S_q+\cL k_q=O(|U|^2)\).
This proves the value and differential conditions in
\eqref{eq:admissible-mixed}, including derivatives tangent to \(\Tlift\).

Finally, \(k_p\) is odd under the first antipodal map and even under
the second; \(k_q\) has the opposite parities. Both are odd under
\eqref{eq:reflection}. These statements follow directly from
\eqref{eq:mixed-primitives}--\eqref{eq:geometric-K1}; the radial cutoff
preserves them. This proves all the required parities.
\end{proof}

At \(p=\pm q\), a graph direction may satisfy \(Uq_t=0\) while
\(Uq_t'\ne0\). The tangent--normal term in
\eqref{eq:mixed-jet-recovery} records precisely this remaining direction.
No division by \(\sin(t-s)\) occurs in the construction.

The argument applies to any fixed \(K_1\) satisfying
\eqref{eq:admissible-mixed} and the stated parities. Appendix~\ref{app:explicit-K1}
gives a second, algebraic choice that is useful for direct computation.
Both this tensor and the geometric choice \eqref{eq:geometric-K1}
satisfy the conditions used to prove the quadratic margin and cubic mean.

%% file: cubic.tex
\section{The third-order curvature coefficient}\label{sec:cubic}

The second coefficient \(\Vtwo\) now vanishes only on \(\Tlift\). For the
chosen tensors \(H,K\), write \(\Vthree=\cR(H,K)\). Following
Brendle and Hung~\cite[Proposition~5.5]{BrendleHung2026}, we determine
the average of \(\iota_0^*\Vthree\) on \(T_0\) by expanding in
the tensor amplitudes. The average depends only on the following coefficient
in the amplitude expansion of \(\Vthree\).

\begin{proposition}\label{prop:basic-cubic}

On \(\Tlift\), with \(p=e^{si}\), \(q=e^{ti}\), and \(\theta=t-s\), we have
\begin{equation}\label{eq:basic-cubic}
 \cR(B+C,K_2+K_3)
 =\frac{512}{375}+\frac{9728}{21125}\cos2\theta.
\end{equation}

The corrected quadratic coefficient and its first derivative along
\(\cZ_{\mathrm I}\) vanish on \(\Tlift\).
\end{proposition}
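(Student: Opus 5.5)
The plan is to compute \(\cR(B+C,K_2+K_3)\) directly from the cubic reduction formula \eqref{eq:cubic-reduction}. We apply it to the path \(g_0+\tau(B+C)+\tau^2(K_2+K_3)\) at the points \(\iota_0(s,t)\) of \(\Tlift\). By diagonal \(SO(4)\)-equivariance and the left-translation structure along \(T_0\), we may put \(p=1\) and \(q=e^{\theta i}\). In the eight-dimensional fixed-base slice \(X+a\cdot N\), \(Y+b\cdot N\) with \(N=(E_j,E_k,F_j,F_k)\), the four reduced terms then become finite rational-trigonometric expressions in \(\theta\). At the two exceptional points \(\theta=0,\pi\) the fixed-base Hessian degenerates, as in the table after Proposition~\ref{prop:clean-zero}. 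There we do not evaluate directly. Instead we prove the identity over \(\Q(z)\) with \(\cos\theta=(1-z^2)/(1+z^2)\), as in Proposition~\ref{prop:odd-margin}, and extend it by smoothness of the reduced coefficient.

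The key steps are as follows.
\begin{enumerate}
\item Compute the first displacement \(\eta_1=-A^{-1}r_{B+C}\). By Lemma~\ref{lem:pole-free}, \(C\) contributes nothing to \(\eta_1\), so \(\eta_1=-A^{-1}r_B\), and it is odd under \eqref{eq:reflection}.
\item Expand \(F_3\), \(D_\nu F_2[\eta_1]\), \(\tfrac12D^2_\nu F_1[\eta_1,\eta_1]\) and \(\tfrac16D^3_\nu F_0[\eta_1^3]\) in powers of the amplitudes of \(B\) and \(C\). Reflection parity then kills every monomial odd in \(B\). Since \(\eta_1\) is already first order in \(B\), the only surviving contributions are:
\begin{itemize}
\item the third metric variation in \(B,B,C\) and \(C,C,C\);
\item the cross terms \(B\)--\((K_2+K_3)\) and \(C\)--\((K_2+K_3)\) from the second variation;
\item the displacement terms quadratic in \(\eta_1\), paired with \(C\) or with \(g_0\)-curvature.
\end{itemize}
\item Drop the pure-\(C\) part. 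The path \(g_0+\tau C\) preserves \(T_0\) as a totally geodesic flat torus, as shown in the proof of Proposition~\ref{prop:mixed-jets}, so that part vanishes.
\item Evaluate the \(K_2\) and \(K_3\) pieces. \(K_2\) is a conformal multiple of \(g_0\), so its interaction with \(B+C\) reduces to the conformal variation formula plus a term involving \(\eta_1\). \(K_3=\tfrac85(Bg_0^{-1}C+Cg_0^{-1}B)\) is algebraic in \(B,C\). Both reduce to the frame calculus of Appendix~\ref{app:frame-calculus}, applied to the connection variations \eqref{eq:connection-variation} of the Maurer--Cartan tensors.
\end{enumerate}

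For the reduced third coefficient to be well defined on \(\Tlift\), we need the second statement of the proposition, and we prove it first. It follows from Proposition~\ref{prop:mixed-jets}: each of the functions \(\cQ(B)+\cL K_2\), \(2\cQ(B,C)+\cL K_3\) and \(\cQ(C)\) in \eqref{eq:five-corrections} vanishes on \(\Tlift\) together with its first derivative along \(\cZ_{\mathrm I}\). Hence so does their sum, the corrected quadratic coefficient of \(B+C\) with \(K_2+K_3\). Lemma~\ref{lem:change-section} then makes \(\cR(B+C,K_2+K_3)\) independent of the tubular section and of the second displacement. This justifies computing in the fixed-base slice at regular points and also the smooth extension across \(\theta=0,\pi\).

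I expect the main obstacle to be the third-variation term \(F_3\) together with \(D_\nu F_2[\eta_1]\), and especially the Gram-determinant normalization. The cubic coefficient of sectional curvature includes contributions from the perturbed Gram determinant. I would handle this by working with the unnormalized numerator, using the comparison in the proof of Lemma~\ref{lem:change-section}: on \(\Tlift\) the second numerator coefficient vanishes, so the cubic coefficient is simply \(n_3/D_{\mathrm I}\). I would then collect the result into its Fourier modes in \(\theta\). Because everything is \(\pi\)-periodic in \(\theta\) and even under \(\theta\mapsto-\theta\) after parity, only the constant and \(\cos2\theta\) modes should survive. The claimed coefficients \(512/375\) and \(9728/21125\) would then be confirmed by exact symbolic evaluation at several rational points \((\cos\theta,\sin\theta)\), for example \((1,0),(0,1),(3/5,4/5)\), backed by the full \(\Q(z)\) identity.
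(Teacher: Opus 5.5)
This is essentially the paper's proof: reduce to $p=1$, $q=e^{\theta i}$ by common left multiplication by $e^{ai}$ (general elements of the diagonal $SO(4)$-action do not preserve $B$ and $C$), solve $A_z\eta_1+b_z=0$ in the fixed-base slice at regular points, substitute into \eqref{eq:cubic-reduction} with the numerator coefficient divided by $D_{\mathrm I}$, verify the rational function \eqref{eq:cubic-rational} exactly over $\Q(z)$, extend across $\theta=0,\pi$ by smoothness and Lemma~\ref{lem:change-section} (the paper also cross-checks $p=q=1$ by a full base-and-plane calculation), and read the quadratic statement off Proposition~\ref{prop:mixed-jets}. Your parity pruning is only a shortcut, and as written it is misstated: with $h=\delta B+\mu C$ and $k=\delta^2K_2+\delta\mu K_3$ only the $\delta^2\mu$ coefficient survives, and it contains $D_\nu F_2[\eta_1]$ through the $B$--$C$ second variation and through $F_{1,K_3}$ (both absent from your list of survivors), whereas the $B$--$K_2$, $C$--$K_3$ and $\tfrac16D^3_\nu F_0[\eta_1,\eta_1,\eta_1]$ terms vanish on $\Tlift$; moreover, evenness and $\pi$-periodicity do not exclude $\cos4\theta$ and higher modes, so the full identity over $\Q(z)$, not evaluation at three rational points, is what proves \eqref{eq:basic-cubic}.
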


\begin{proof}
The quadratic statement follows from
Proposition~\ref{prop:mixed-jets}. Common left multiplication by
\(e^{ai}\) preserves \(g_0,B,C,K_2,K_3\), so it suffices to compute
at \(p=1,q=e^{\theta i}\); the answer applies to every common phase.

At a point of \(\cZ_{\mathrm I}^{\mathrm{reg}}\), solving
\(A_z\eta_1+b_z=0\) as in \eqref{eq:schur-reduction} gives the first
minimizing graph displacement. In the order
\((a_{E_j},a_{E_k},a_{F_j},a_{F_k},b_{E_j},b_{E_k},b_{F_j},b_{F_k})\), it is
\begin{equation}\label{eq:singular-displacement}
 \begin{aligned}
  \eta_1&=(a,b),\\
 65a&=(-96,\ 32\cot\theta,\ -64+24\sin^2\theta,\
                            24\sin\theta\cos\theta),\\
 65b&=(64-24\sin^2\theta,\ 24\sin\theta\cos\theta,\
                            96,\ 32\cot\theta).
 \end{aligned}
\end{equation}
The eight stationarity equations and the cubic substitution in
\eqref{eq:cubic-reduction} give the exact coefficient
\begin{equation}\label{eq:cubic-rational}
 \frac{115712z^4-2048z^2+115712}{63375(1+z^2)^2},
 \qquad
 \cos\theta=\frac{1-z^2}{1+z^2},\quad
 \sin\theta=\frac{2z}{1+z^2},
\end{equation}
which is \eqref{eq:basic-cubic}. The calculation is described in
Appendix~\ref{app:cubic-calculation}.

The apparent poles in \eqref{eq:singular-displacement} come from the
choice of fixed-base complement. Lemma~\ref{lem:change-section} and the
smooth displacement \eqref{eq:smooth-base-displacement}--\eqref{eq:smooth-V}
remove them.
Equivalently, the full normal
calculation at \(p=q=1\) gives zero quadratic coefficient and cubic
coefficient \(115712/63375\), agreeing with
\eqref{eq:basic-cubic}.
 Smoothness gives the identity everywhere on
\(\Tlift\).
\end{proof}

We describe the displacement without dividing by \(\sin\theta\). Put
\begin{align}
 w_\theta&=-\frac{32}{65}
     \bigl(\sin\theta\cos\theta\,j+\cos^2\theta\,k\bigr),
       \label{eq:smooth-base-displacement}\\
 U_\theta&=\frac1{65}
   \bigl(-96E_j-(64-24\sin^2\theta)F_j
                    +24\sin\theta\cos\theta\,F_k\bigr),\notag\\
 V_\theta&=\frac1{65}
   \bigl((64-24\sin^2\theta)E_j
                    +24\sin\theta\cos\theta\,E_k+96F_j\bigr).
       \label{eq:smooth-V}
\end{align}

For the actual first tensor \(H\), define
\begin{equation}\label{eq:smooth-moving-plane}
 p_\tau=p,\qquad q_\tau=q\exp(\tau\eps w_\theta),\qquad
 \sigma_\tau=\Span\{E_i+\tau\eps U_\theta,\,
                    F_i+\tau\eps V_\theta\}
       \subset T_{(p_\tau,q_\tau)}M.
\end{equation}
All frame fields in the last expression are evaluated at the moved point.

\begin{lemma}\label{lem:pole-free}
The curve \eqref{eq:smooth-moving-plane} has a minimizing first normal
displacement along \(\Tlift\). Its cubic curvature coefficient for
\(g_0+\tau H+\tau^2K\) is a smooth periodic function of \(s,t\), equal
to the reduced cubic coefficient.
\end{lemma}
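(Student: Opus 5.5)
The plan is to compare the explicit curve \eqref{eq:smooth-moving-plane} with the fixed-base displacement \eqref{eq:singular-displacement} at points of \(\Tlift\cap\cZ_{\mathrm I}^{\mathrm{reg}}\). Then we extend by continuity and use Lemma~\ref{lem:change-section} to identify cubic coefficients. We first check the amplitudes. On \(\Tlift\) the only part of \(H=O+\eps(B+C)+c_*H_0\) producing a nonzero first displacement is \(\eps B\), since the remaining terms give \(b_z=0\). For \(O\) this holds by reflection evenness combined with \eqref{eq:odd-lower-bound}: a smooth nonnegative function vanishing on \(\Tlift\) has zero differential there, and the Schur form of \(\cQ(O)\) forces \(b_z\) into the kernel. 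For \(C\) and \(H_0\) it is the vanishing \(\dd F_{1,C}=\dd F_{1,H_0}=0\) established in the proofs of Propositions~\ref{prop:mixed-jets} and~\ref{prop:H0-opening}. Hence the first minimizing displacement for \(H\) is \(\eps\eta_1\), with \(\eta_1\) from \eqref{eq:singular-displacement}, modulo \(T\cZ\).

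Next, we compute the first-order variation of the curve \(\sigma_\tau\) in the full base-and-plane coordinates of \(\cN\). The base moves only in \(q\), by \(\eps F_{w_\theta}\), where \(w_\theta\in\Span\{j,k\}\). We then decompose this base motion. Along the type-I torus through \((p,q)\) with axis \(i\), a base variation \(F_w\) is tangent to \(\cZ_{\mathrm I}\) up to the corresponding variation of the supporting plane, and it carries the frame fields \(E_i,F_i\) along. The plan is to verify, by writing the \(F_k\) component of \(w_\theta\) as a combination of a tangent vector of \(\cZ_{\mathrm I}\) (rotating \(L\) about \(p\)) and a fixed-base plane displacement, that
\[
 (\eps U_\theta,\eps V_\theta)+\eps\,\bigl(\text{plane motion induced by }F_{w_\theta}\bigr)
 \equiv \eps\eta_1 \pmod{T\cZ_{\mathrm I}}.
\]
Concretely, the \(\cot\theta\) entries of \(65a\) and \(65b\) must be absorbed. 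Multiplying them by \(\sin\theta\) gives exactly the \(-\tfrac{32}{65}\cos^2\theta\,k\) and \(-\tfrac{32}{65}\sin\theta\cos\theta\,j\) components of \(w_\theta\), once the tangent vector \(\partial/\partial L\) of \(\cZ_{\mathrm I}\) in the \(k\)-direction (which has size \(\sin\theta\) in fixed-base coordinates) is subtracted. This is a finite linear-algebra check in the frame of Appendix~\ref{app:frame-calculus}. I expect it to be the main obstacle, because the bookkeeping of how moving \(q\) transports \(E_i,F_i\) and the supporting plane must be done carefully.

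Given this congruence, the curve's first displacement lies in the minimizing class, so it is a minimizing first normal displacement. Smoothness and periodicity in \((s,t)\) follow because \(w_\theta,U_\theta,V_\theta\) are trigonometric polynomials in \(\theta=t-s\), and because \(g_\tau\), the frames, and \(\exp\) are smooth. The cubic coefficient of \(\Sec\) along the curve is then \(F_3+D_\nu F_2[\eta]+\tfrac12D^2_\nu F_1[\eta,\eta]+\tfrac16D^3_\nu F_0[\eta,\eta,\eta]\). Second-order deviations of the curve drop out by the stationarity cancellation. By Lemma~\ref{lem:change-section}, and since \(\Vtwo=0\) and \(\dd\Vtwo=0\) on \(\Tlift\) (Proposition~\ref{prop:quadratic-final}), this agrees with \eqref{eq:cubic-reduction} for any representative, hence with the reduced cubic coefficient. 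This holds on \(\Tlift\cap\cZ_{\mathrm I}^{\mathrm{reg}}\), and density together with continuity of both sides extends it to \(\theta=0,\pi\), where the uniform minimization of Lemma~\ref{lem:normal-minimum} already provides a smooth reduced coefficient.
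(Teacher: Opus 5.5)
Your proposal is correct and follows the paper's route. On the regular part you subtract \(\tfrac{32}{65}\cot\theta\) times the tangent vector of \(\cZ_{\mathrm I}\) obtained by rotating the axis toward \(k\) at fixed \(p\); this turns \eqref{eq:singular-displacement} exactly into \((w_\theta,U_\theta,V_\theta)\). You then invoke Lemma~\ref{lem:change-section} with \(\Vtwo=\dd\Vtwo=0\) on \(\Tlift\) and pass to \(\theta=0,\pi\) by continuity. The check you flag as the main obstacle is immediate in the left-invariant frame: that tangent vector has base part \(q^{-1}\dot q=\sin^2\theta\,j+\sin\theta\cos\theta\,k\) and plane part \((E_k,F_k)\), and moving the base induces no extra plane motion because the frame fields are evaluated at the moved point.

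One justification should be replaced. \eqref{eq:odd-lower-bound} cannot yield \(b_z=D_\nu F_{1,O}=0\), for three reasons: it constrains \(\cQ(O)+\cL K_0\) only along \(\cZ_{\mathrm I}\); \(\cQ(O)=-\tfrac{389}{11025}\sin s\cos t\) does not itself vanish on \(\Tlift\); and \(A_z\) has no kernel. The reflection alone does the job. \(F_{1,O}\) is reflection-invariant and vanishes on \(\cZ\), while the reflection fixes each point of \(\Tlift\) and acts by \(-1\) on a complement of \(T\Tlift\subset T\cZ\), so \(\dd F_{1,O}=0\) there. The paper reaches the same conclusion via total geodesy of \(T_0\), Codazzi, and continuity from the regular locus.
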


\begin{proof}
The invariance needed here is
\cite[Proposition~2.9]{BrendleHung2026}; we give an explicit smooth
representative of the first displacement.
Hold \(p=e^{si}\) fixed and vary the type-I torus axis by \(\dot u=k\).
For \(q=p(\cos\theta+\sin\theta\,u)\), this tangent variation to
\(\cZ_{\mathrm I}\) has
\(q^{-1}\dot q=\sin^2\theta\,j+\sin\theta\cos\theta\,k, \quad (\dot X,\dot Y)=(E_k,F_k)\).

Subtracting \(32\cot\theta/65\) times this tangent vector from
\eqref{eq:singular-displacement} gives exactly
\eqref{eq:smooth-base-displacement}--\eqref{eq:smooth-V}.
The result is smooth at \(\theta=0,\pi\), and the stationarity equations
extend there by continuity in the full normal bundle.

Only \(B\) contributes to the first normal displacement on \(\Tlift\).
For \(H_0\) this follows from \eqref{eq:H0-radical} and its proof;
for \(C\) it follows from the ambient connection and differential
identities proved in Proposition~\ref{prop:mixed-jets}. For \(O\), the reflection
\eqref{eq:reflection} keeps \(T_0\) totally geodesic, so Codazzi gives
zero first plane-curvature gradient; regular normal complements and
continuity give \(\dd F_{1,O}=0\) on \(\Tlift\). Thus the displacement
for \(H\) is \(\eps\) times that for \(B+C\).
Since \(\Vtwo=\dd \Vtwo=0\) on \(\Tlift\), Lemma~\ref{lem:change-section}
identifies the cubic coefficient with the reduced one. The explicit
formula makes smoothness and periodicity immediate.
\end{proof}

Denote the function in Lemma~\ref{lem:pole-free} by
\(\Vthreebase=\iota_0^*\Vthree,\quad \Vthreebase(s,t)=\Vthree\bigl(\iota_0(e^{si},e^{ti})\bigr)\).
We take averages with respect to the constant background area form on
\(T_0\), normalized to total mass one.

\begin{proposition}\label{prop:cubic-mean}
For every \(K_1\) satisfying \eqref{eq:admissible-mixed}
and the parities in Section~\ref{sec:K1-choice}, the choices
\eqref{eq:small-tensor-choice}--\eqref{eq:H-final} give
\begin{equation}\label{eq:cubic-mean}
 \mean_{T_0}\Vthreebase=\frac{512}{375}\eps^3=:m>0.
\end{equation}

\end{proposition}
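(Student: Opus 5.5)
We expand \(\Vthree=\cR(H,K)\) multilinearly in the tensor amplitudes. Write \(H=c_*H_0+O+\eps B+\eps C\) and \(K=K_0+\eps K_1+\eps^2(K_2+K_3)\). The reduced cubic coefficient \eqref{eq:cubic-reduction} is a sum of terms cubic in \(H\) and terms bilinear in \(H\) and \(K\), through \(F_3\), \(D_\nu F_2[\eta_1]\), and so on. The first displacement \(\eta_1\) is linear in \(H\). By Lemma~\ref{lem:pole-free} it equals \(\eps\) times the displacement for \(B\) alone. Hence every term of \(\Vthreebase\) is a monomial in the tensors \(H_0,O,B,C,K_0,K_1,K_2,K_3\). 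We classify these monomials by symmetry and show that, after averaging over \(T_0\), only the monomials assembling \(\eps^3\cR(B+C,K_2+K_3)\) survive. Proposition~\ref{prop:basic-cubic} then gives the mean \(\eps^3\cdot 512/375\), since \(\cos2\theta\) has zero mean with respect to the constant area form.

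\textbf{Step 1: reflection parity.} The reflection \eqref{eq:reflection} fixes \(\Tlift\) pointwise and is an isometry of \(g_0\). Under it \(O,C,H_0,K_0,K_2\) are even and \(B,K_1,K_3\) are odd, so \(\Vthreebase\) transforms by the product of the parities of the factors in each monomial. Every monomial of odd total parity therefore vanishes on \(\Tlift\). In particular all monomials with an odd number of factors from \(\{B,K_1,K_3\}\) drop out.

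\textbf{Step 2: terms without \(B\).} Any monomial containing no \(B\) (and no \(K_1\) or \(K_3\), by Step 1) comes from a metric path \(g_0+\tau h+\tau^2k\) whose first displacement on \(\Tlift\) vanishes, as shown in Lemma~\ref{lem:pole-free}. Here \(h\) is a combination of \(H_0,O,C\) and \(k\) is a combination of \(K_0,K_2\). Along such a path \(T_0\) is totally geodesic to the relevant order, so the cubic coefficient reduces to the third variation of the intrinsic Gaussian curvature of \(T_0\). By Gauss--Bonnet, as in Remark~\ref{rem:first-order-obstruction}, this has zero integral, up to the area-form correction, which is harmless because the lower-order coefficients vanish there.

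\textbf{Step 3: remaining terms with two \(B\)'s.} The monomials still to be handled contain an even number of factors from \(\{B,K_1,K_3\}\) together with at least one of \(O,H_0,K_0\), or they are of the forms \(BBO\), \(BBH_0\), \(BK_1\), \(BK_3\), \(K_1C\), and so on. We remove these with the antipodal maps \((p,q)\mapsto(-p,q)\) and \((p,q)\mapsto(p,-q)\). These preserve \(g_0\), map \(T_0\) to itself by the half-shifts \(s\mapsto s+\pi\) and \(t\mapsto t+\pi\), and so preserve the mean. Under them \(O_p\) and \(k_p\) are odd under the first map, while \(O_q\) and \(k_q\) are odd under the second. \(B\) and \(C\) are even under both. \(H_0\) carries the factor \(r_0\), which is odd under each single antipodal map. \(K_0\) is odd under both. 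The argument is that any monomial containing exactly one of \(O_p,O_q,H_0,K_0,k_p,k_q\) is odd under one of these maps, so its mean vanishes. Monomials with two such factors, such as \(BBOO\), are not cubic and do not occur. This leaves \(BBC\), \(BK_3\), \(CK_2\), \(BBB\) and the like. The odd ones were removed in Step 1, and those without \(B\) in Step 2. The survivors are exactly the \(B\)-containing parts of \(\cR(B+C,K_2+K_3)\). The pure-\(C\) and \(CK_2\) parts have zero mean by Step 2, so adding them back does not change the mean. Each survivor carries total weight \(\eps^3\), counting \(K_2\) and \(K_3\) as \(\eps^2\).

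\textbf{Main obstacle.} The delicate point is Step 2 together with its cross-terms. The Gauss--Bonnet argument needs the cubic coefficient for a path with zero first displacement to be the intrinsic curvature variation. It must remain so even after incorporating \(D_\nu F_2[\eta_1]\)-type contributions and the Gram normalization, and even at \(p=\pm q\). We would handle this using Lemma~\ref{lem:change-section} with \(\Vtwo=\dd\Vtwo=0\) on \(\Tlift\). We would also check that terms mixing \(O\) with \(B\) through \(\eta_1\), for example \(D_\nu F_{2}(O,B)[\eta_B]\), are genuinely covered by the antipodal argument.
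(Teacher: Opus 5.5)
Your outline (reflection parity, Gauss--Bonnet for the \(B\)-free part, antipodal maps, then Proposition~\ref{prop:basic-cubic}) matches the paper's. However, you expand \(\Vthree\) in monomials of the eight separate tensors, and this discards the property that every step relies on.

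The paper ties the amplitudes together: \(h=\lambda O+\delta B+\mu C+\gamma H_0\) and \(k=\lambda^2K_0+\lambda\delta K_1+\delta^2K_2+\delta\mu K_3\). For every \((\lambda,\delta,\mu,\gamma)\), the corrected quadratic coefficient and its first derivative vanish on \(\Tlift\). Hence, by Lemma~\ref{lem:change-section}, the reduced cubic is an intrinsically defined cubic polynomial in four variables. Each symmetry then simply acts on the amplitudes: the reflection sends \(\delta\mapsto-\delta\), the simultaneous antipodal map sends \(\lambda\mapsto-\lambda\), and the first-factor antipodal map at \(\lambda=0\) sends \(\gamma\mapsto-\gamma\).

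Your monomials instead come from sub-paths whose quadratic coefficient \(q\) does not vanish to first order on \(\Tlift\):
\(g_0+\tau\mu C+\tau^2\kappa K_2\) has \(q=-\frac{128}{845}\kappa\cos2\theta\).
\(g_0+\tau\lambda O+\tau^2\kappa_0K_0\) has \(q=(\lambda^2-\kappa_0)\cQ(O)\ne0\) for \(\kappa_0\ne\lambda^2\), by \eqref{eq:K0-matching}.
\(g_0+\tau\delta B+\tau^2\kappa_1K_1\) has \(\dd q\ne0\), because \(\dd(\cL K_1)\) must cancel the nonzero transverse derivative of \(2\cQ(O,B)\).

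On the \(B\)-containing sub-paths the first displacement is nonzero, so the cubic coefficient shifts by \(\dd q[t]\) when the normal section changes. Your monomials are therefore defined only relative to a fixed tubular system. Steps~1 and~3 are then valid only if that system is equivariant under the reflection and both antipodal maps, which you never arrange. The remedy in your last paragraph, Lemma~\ref{lem:change-section} with \(\Vtwo=\dd\Vtwo=0\), applies only to the full pair \((H,K)\), not to these sub-paths.

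Step~2 is also not justified as written. Take the \(B\)-free sub-path \(h=\lambda O+\mu C+\gamma H_0\), \(k=\kappa_0K_0+\kappa_2K_2\). Its order-two coefficient on \(T_0\) is \(F_2=(\lambda^2-\kappa_0)\cQ(O)+\kappa_2\cL K_2\ne0\). Gauss--Bonnet therefore gives \(\int_{T_0}F_3\,\dd A_0=-\int_{T_0}F_2a_1\,\dd A_0\), not zero outright. This integral does vanish: \(a_1\) is a constant plus multiples of \(\cos s\) and \(\sin t\), whereas \(F_2\) is spanned by \(\sin s\cos t\) and \(\cos2(t-s)\). But you must prove this, and your stated reason (``the lower-order coefficients vanish'') is false.

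Step~3 splits \(K_1=k_p+k_q\) with single-factor antipodal parities. That holds for \eqref{eq:geometric-K1}. The proposition, however, covers every \(K_1\) satisfying \eqref{eq:admissible-mixed} that is only assumed odd under the reflection and the simultaneous antipodal map. The \(BK_1\) term must therefore be removed by the simultaneous map, as the paper does.

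With the grouped amplitudes, all of this is automatic. At \(\delta=0\) the family is reflection-invariant and its coefficients of orders \(0,1,2\) vanish on \(T_0\). Gauss--Bonnet then removes every \(\delta\)-free monomial with no area correction, leaving only \(\delta^2\lambda\), \(\delta^2\gamma\), and \(\delta^2\mu\).
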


\begin{proof}
We follow the amplitude-polynomial argument of
\cite[Proposition~5.5]{BrendleHung2026}; the parities here isolate
exactly the \(\delta^2\mu\) term.
Introduce real amplitudes \(\lambda,\delta,\mu,\gamma\) and set
\begin{align*}
 h&=\lambda O+\delta B+\mu C+\gamma H_0,\\
 k&=\lambda^2K_0+\lambda\delta K_1+\delta^2K_2+\delta\mu K_3.
\end{align*}
The corrected quadratic coefficient and its first derivative vanish
on \(\Tlift\) for every amplitude choice. This follows term by term from
Propositions~\ref{prop:H0-opening}, \ref{prop:odd-margin},
and~\ref{prop:mixed-jets}. The pullback of the reduced cubic coefficient to \(T_0\) is
therefore an intrinsically defined homogeneous cubic polynomial in
these amplitudes.
The reflection \eqref{eq:reflection} changes \(\delta\) to
\(-\delta\), so this polynomial is even in \(\delta\).

When \(\delta=0\), the reflection preserves the entire metric family and
fixes \(T_0\), which remains totally geodesic. Its intrinsic curvature
coefficients of orders zero, one, and two vanish. The reduced cubic
equals its intrinsic cubic, since the first normal displacement is
zero. More explicitly, write the intrinsic curvature and area form as
\[
 K_\tau^{\mathrm{int}}=\tau^3r_{\mathrm{int}}+O(\tau^4),\qquad
 \dd A_\tau=(1+\tau a_1+O(\tau^2))\,\dd A_0.
\]
The Gauss--Bonnet theorem gives
\[
 0=\int_{T_0}K_\tau^{\mathrm{int}}\,\dd A_\tau
   =\tau^3\int_{T_0}r_{\mathrm{int}}\,\dd A_0+O(\tau^4).
\]
Thus the cubic coefficient has zero average with respect to the
background area form. Every monomial not involving \(\delta\) therefore
has zero average.

The remaining possibilities are
\(\delta^2\lambda,\delta^2\mu,\delta^2\gamma\). The simultaneous
antipodal map reverses \(O,K_1\) and preserves the other relevant
tensors and the torus area, so the coefficient of
\(\delta^2\lambda\) has zero mean. To isolate the
\(\delta^2\gamma\) coefficient, set \(\lambda=0\).
The first-factor antipodal map then reverses \(H_0\) and preserves
\(B,C,K_2,K_3\), so this coefficient also has zero mean.

It remains to determine the coefficient of \(\delta^2\mu\).
Set \(\lambda=\gamma=0\), \(\delta=\mu=1\). Proposition~\ref{prop:basic-cubic}
gives mean \(512/375\), because
\(\cos2(t-s)\) has zero mean. For the actual tensors,
\(\lambda=1\), \(\delta=\mu=\eps\), and \(\gamma=c_*\).
This proves \eqref{eq:cubic-mean}.
\end{proof}

Only the prescribed derivatives and parity of \(K_1\) entered this
argument. At fixed tensor amplitudes, replacing \(K_1\) may change the
pointwise function \(\Vthreebase\), but not its average. If the auxiliary
constants are reselected for a new \(K_1\), the value of \(\eps\) may also
change; the identity \eqref{eq:cubic-mean} still holds with that value.
The Poisson equation uses the corresponding function \(\Vthreebase\).

%% file: positivity.tex
\section{Completion of the construction}\label{sec:positivity}

We justify the conformal correction in Section~\ref{sec:complete-metric}
and then prove Theorem~\ref{thm:construction}. This is the final step of
the method in \cite[Section~6]{BrendleHung2026}.
Proposition~\ref{prop:cubic-mean} gives the zero-mean right-hand side
required in \eqref{eq:poisson}. With the sign convention in
\eqref{eq:torus-laplacian}, the solution is as follows.
For completeness, if \(\widehat{V}_{0,ab}^{(3)}\) are the Fourier
coefficients of \(\Vthreebase\) in the \(2\pi\)-periodic coordinates
\(s,t\), then
\[
 f(s,t)=-60\sum_{(a,b)\in\Z^2\setminus\{(0,0)\}}
  \frac{\widehat{V}_{0,ab}^{(3)}e^{i(as+bt)}}
       {29(a^2+b^2)+38ab}.
\]
The coefficient \(-60\) is \(2\) from \eqref{eq:poisson}
divided by the common factor \(-1/30\) in the Fourier symbol of
\(\Delta_T\), as given by \eqref{eq:torus-laplacian}.
The denominator equals \(10(a^2+b^2)+19(a+b)^2\) and is positive
away from the zero character.
 Smoothness of \(\Vthreebase\) implies
convergence with all derivatives. The usual conjugacy of the Fourier
coefficients makes \(f\) real-valued.

The cutoff in \eqref{eq:smooth-extension} vanishes on a neighborhood
of every point where a projection is undefined. Thus the extension is
smooth on \(M\), and it equals \(f\) on \(T_0\).

\begin{proof}[Proof of Theorem~\ref{thm:construction}]
We apply the conformal-correction argument of
\cite[Proposition~6.1]{BrendleHung2026}; Lemma~\ref{lem:normal-minimum}
provides its uniform completion on both zero components.
All tensors and constants have been fixed before choosing \(\tau\).

Since \(g_0\geq(5/8)I_6\) and the perturbation tensors are bounded on
the compact manifold, \(g_\tau\) is positive definite for sufficiently
small \(|\tau|\).

On \(T_0\), which is flat and totally geodesic for the background metric,
the conformal first-variation formula is
\begin{equation}\label{eq:conformal-linearization}
 \iota_0^*\cL(\widetilde f\,g_0)=-\tfrac12\Delta_Tf.
\end{equation}

The term of order \(\tau^3\) in \eqref{eq:final-metric} therefore
changes the pullback of the reduced cubic to
\(\iota_0^*\Vthreecorr=\Vthreebase-\tfrac12\Delta_Tf=m>0\),
where we write
\(\Vthreecorr=\Vthree+\cL(\widetilde f\,g_0)\) on \(\cZ\).
It does not change the second coefficient \(\Vtwo\).
The full normal minimum has a uniform expansion
\begin{equation}\label{eq:final-normal-minimum}
 \tau^2\Vtwo(z)+\tau^3\Vthreecorr(z)+O(\tau^4),\qquad z\in\cZ,
\end{equation}
where \(\Vtwo\geq0\), \(\{\Vtwo=0\}=\Tlift\), and \(\Vthreecorr=m>0\) on
\(\Tlift\). These are precisely the hypotheses of
Lemma~\ref{lem:normal-minimum}, using the positive full normal Hessian
in Proposition~\ref{prop:clean-zero}. It follows that every plane in
a fixed tube about \(\cZ\) has positive curvature for all sufficiently
small positive \(\tau\), uniformly in its base point. The compact
complement has a positive background curvature minimum, which also
persists. Decreasing \(\tau_0\) accommodates both regions and proves
the asserted strict inequality on the whole Grassmann bundle.
\end{proof}

Theorem~\ref{thm:main} follows. Since each three-sphere has Euler
characteristic zero and Euler characteristic is multiplicative for
products of finite CW complexes, this also gives
\(\chi(S^3\times S^3)=0\) for a closed even-dimensional positively
curved manifold.

%% file: calculations.tex
\section{Exact curvature identities and their verification}
\label{app:calculations}

This appendix specifies the finite calculations used in
Propositions~\ref{prop:H0-opening}, \ref{prop:odd-margin},
\ref{prop:mixed-jets}, and~\ref{prop:basic-cubic}.
The accompanying code evaluates these identities over \(\Q(z)\), not
by numerical sampling. The geometric arguments establishing the zero
locus, the appropriate normal spaces, and uniform positivity are separate
from these algebraic checks.

\subsection{Frame calculus}\label{app:frame-calculus}

In this subsection only, order the tangent frame as
\((e_1,\ldots,e_6)=(E_i,E_j,E_k,F_i,F_j,F_k)\), and write
\([e_i,e_j]=c_{ij}^{m}e_m\), summing repeated indices. Write \(R^g\)
for the curvature endomorphism, to distinguish it from
\(R=\Ad_{p^{-1}q}\). We use \eqref{eq:curvature-convention}, so
\(\Sec_g(X\wedge Y)=g(R^g(X,Y)Y,X)/|X\wedge Y|_g^2\).
For any metric matrix \(g\), the noncoordinate formulas are
\begin{align}
 2g_{k\ell}\Gamma_{ij}^{\ell}
 ={}&e_i g_{jk}+e_j g_{ik}-e_k g_{ij}
       +c_{ij}^{m}g_{mk}-c_{jk}^{m}g_{mi}
       +c_{ki}^{m}g_{mj},\label{eq:koszul-engine}\\
 (R^g)_{ijk}{}^\ell
 ={}&e_i\Gamma_{jk}^{\ell}-e_j\Gamma_{ik}^{\ell}
       +\Gamma_{jk}^{m}\Gamma_{im}^{\ell}
       -\Gamma_{ik}^{m}\Gamma_{jm}^{\ell}
       -c_{ij}^{m}\Gamma_{mk}^{\ell}.
       \label{eq:curvature-engine}
\end{align}
All spatial two-jets follow from quaternion multiplication and
\(E_a r=-ar\), \(F_a r=ra\).
For example, if \(J_a b=a\times b\), then
\(E_aR=-2J_aR,\quad F_aR=2RJ_a,\quad E_bE_aR=4J_aJ_bR\).
The last order of multiplication is intentional. The code uses ordered
frame derivatives and checks their commutators against the structure
constants.

For a metric series, the coefficients of the lower connection in
\eqref{eq:koszul-engine} depend linearly on the metric coefficients.
The upper connection coefficients are then obtained recursively by
solving against the fixed zeroth-order metric. Differentiating this
recurrence and substituting into \eqref{eq:curvature-engine} gives the
curvature coefficients. This supplies one direct method of computing
both the quadratic and cubic identities.

A separate check contracts curvature before expanding the full tensor.
For frame fields with constant coefficients, let
\(B_g(X,Y,Z)=g(\nabla_XY,Z)\) and \(c=[X,Y]\). Metric compatibility
and torsion freeness give
\begin{equation}\label{eq:contracted-curvature}
\begin{aligned}
 g(R^g(X,Y)Y,X)={}&X\bigl(B_g(Y,Y,X)\bigr)
                   -Y\bigl(B_g(X,Y,X)\bigr)\\
 &+B_g(X,Y,\cdot)^Tg^{-1}B_g(X,Y,\cdot)\\
 &-B_g(X,X,\cdot)^Tg^{-1}B_g(Y,Y,\cdot)\\
 &-B_g(X,Y,c)-B_g(c,Y,X).
\end{aligned}
\end{equation}
At a background flat torus, the three lower connection vectors in the
Gram terms vanish at order zero. Their contribution through order three
therefore requires only the first derivative of \(g^{-1}\).
This separate check uses \eqref{eq:contracted-curvature} without invoking
the other cubic evaluators.

\subsection{The quadratic coefficient of the odd height tensor}\label{app:height-calculation}

At \(p=1\), \(q=\cos\theta+i\sin\theta\), write \(A_p,A_q\) for the
constant matrices multiplying \(p_0,q_1\) in \eqref{eq:odd-tensor},
and put \(X_0=\left(\begin{smallmatrix}0&I\\I&0\end{smallmatrix}\right)\).

For arbitrary \(e,f\in\R^4\), substitute
\begin{align*}
 h(e,f)&=\langle e,p\rangle A_p+\langle f,q\rangle A_q,\\
 k(e,f)&=-\tfrac15\langle e,p\rangle\langle f,q\rangle X_0
     -\tfrac{787}{5880}\langle f,p\rangle\langle e,q\rangle I_6.
\end{align*}
The result is a quadratic expression
\(\cQ(h(e,f))+\cL k(e,f) =e^T\mathsf M_p e+f^T\mathsf M_q f+e^T\mathsf M_{pq}f\).
With the polynomials in Proposition~\ref{prop:odd-margin}, exact
substitution gives
\begin{equation}\label{eq:odd-coefficient-matrices}
\begin{gathered}
 \mathsf M_p=\mathsf M_q
    =\operatorname{diag}\left(0,0,\frac{P}{2d},\frac{P}{2d}\right),\\
 \mathsf M_{pq}
 =d_0 I_4+
 \begin{pmatrix}
 0&0&0&0\\
 0&0&0&0\\
 0&0&V\cos\theta/d&-W\sin\theta/d\\
 0&0&W\sin\theta/d&V\cos\theta/d
 \end{pmatrix},
 \qquad d_0=-\frac{787}{11025}\cos\theta.
\end{gathered}
\end{equation}
The verification checks every entry of these three matrices as a
rational function. The orthogonality \(e\perp f\), supplied by the
diagonal \(SO(4)\)-normalization, eliminates the \(d_0 I_4\) term.
This proves the identity for every type-I torus.

\subsection{An algebraic mixed correction}\label{app:explicit-K1}

For the executable calculations we use the following algebraic choice
of \(K_1\). Like the integral construction in
Section~\ref{sec:mixed-correction}, it satisfies
\eqref{eq:admissible-mixed} and the parities in
Section~\ref{sec:K1-choice}.

Let \(\ell,\widehat\ell\) be either coframe pair
\((\alpha,\widehat\alpha)\) or \((\beta,\widehat\beta)\), and put
\begin{align*}
 U_\ell&=\ell_1+\widehat\ell_1,&
 V_\ell^\pm&=\ell_2\pm\widehat\ell_2,&
 W_\ell^\pm&=\tfrac{13}{7}\ell_3\pm\widehat\ell_3,\\
 T_\ell&=\ell_1^2+\ell_1\odot\widehat\ell_1+\widehat\ell_1^2.
\end{align*}
For a quaternion coordinate vector \(x=(x_0,x_1,x_2,x_3)\), define
\begin{align}
 \mathcal A_\ell(x)
 &=U_\ell\odot(x_0V_\ell^-+x_1W_\ell^-)-\tfrac47x_3T_\ell,
 \label{eq:A-local}\\
 \mathcal B_\ell(x)
 &=U_\ell\odot(x_0W_\ell^+-x_1V_\ell^+)-\tfrac47x_2T_\ell.
 \label{eq:B-local}
\end{align}
Define the algebraic tensor by
\begin{equation}\label{eq:K1}
\begin{aligned}
 K_1^{\mathrm{alg}}={}&\tfrac{11}{325}
       \bigl(\mathcal A_\beta(p)+\mathcal B_\alpha(q)\bigr)
       +\tfrac6{65}r_0
       \bigl(\mathcal B_\alpha(p)+\mathcal A_\beta(q)\bigr).
\end{aligned}
\end{equation}
In \(\mathcal A_\ell(x)\) and \(\mathcal B_\ell(x)\), \(x\) supplies
the scalar coordinate functions, while \(\ell\) specifies the coframe
in the tensor factors. Direct substitution shows that this tensor
is odd under \eqref{eq:reflection} and the simultaneous antipodal map.
The matching equations \eqref{eq:K1-coefficient-system} explain
these coefficients. The verification uses the derivative identities
\eqref{eq:OB-derivatives}--\eqref{eq:B-transverse}.

The compact formula and the program's coefficient table can also be
compared before evaluation on the manifold. Treat the eight quaternion
coordinates, \(r_0\), and the twelve coframe evaluations as independent
variables. Their quadratic evaluations on a formal tangent vector give
identical rational polynomials; the verifier compares every coefficient.
Polarization gives the tensor identity. The spatial-derivative test on a
specified curve is a separate check of the implementation.

The \(B\)-\(C\) identities
\eqref{eq:BC-j}--\eqref{eq:BC-k} are direct evaluations of
\eqref{eq:quadratic-connection-formula} and
\eqref{eq:torus-linearization}. Here we explain the global coverage
of the \(O\)-\(B\) check.

Write \(p=e^{\varphi i}\), \(q=e^{(\varphi+\theta)i}\), and define
\begin{align*}
 a&=-\frac{64}{455}\sin\theta\cos\theta,&
 b&=\frac{32}{6825}(41-30\sin^2\theta),\\
 d&=\frac{352}{6825}\sin\theta,&
 e&=\frac{1312}{6825}\cos\theta.
\end{align*}
Let \(A_{ab}\) denote the infinitesimal rotation taking \(e_a\) toward
\(e_b\). In the order \(A_{02},A_{03},A_{12},A_{13}\), the first
transverse derivatives of \(2\cQ(O,B)\) are
\begin{equation}\label{eq:OB-derivatives}
 \bigl(a\cos\varphi-b\sin\varphi,\
 b\cos\varphi+a\sin\varphi,\
 d\cos\varphi+e\sin\varphi,\
 -e\cos\varphi+d\sin\varphi\bigr).
\end{equation}
Restriction of \eqref{eq:K1} and application of
\eqref{eq:torus-linearization} give their negatives.

For a type-I torus, let \(u,w\) denote its constant left and right
great-circle axes. At \(T_0\), both equal \(i\).

The restrictions of the local expressions
\eqref{eq:A-local}--\eqref{eq:B-local} have transverse derivatives
\begin{align}
 \dot{\mathcal A}_\ell(x)(T,T)
 &=2\{x_0(\dot u_j-\dot w_j)
          +x_1(13\dot u_k/7-\dot w_k)\}
      -\tfrac{12}{7}\dot x_3,\label{eq:A-transverse}\\
 \dot{\mathcal B}_\ell(x)(T,T)
 &=2\{x_0(13\dot u_k/7+\dot w_k)
          -x_1(\dot u_j+\dot w_j)\}
      -\tfrac{12}{7}\dot x_2.\label{eq:B-transverse}
\end{align}

Here \(T\) is the great-circle tangent on the factor carrying \(\ell\),
normalized to have unit length in the round metric.
For \(A_{02},A_{03},A_{12},A_{13}\), respectively, the pairs
\((\dot u,\dot w)\) are
\[
 (k,-k),\quad(-j,j),\quad(j,j),\quad(k,k).
\]
They are independent of the common phase. Indeed, a fixed infinitesimal
\(SO(4)\)-action has the form \(\dot p=a p+p b\); differentiation
of \(u=p^{-1}\dot p_{\mathrm{circle}}\) and
\(w=\dot p_{\mathrm{circle}}p^{-1}\) gives constant commutator
variations of the axes.

The scalar coefficients in \(K_1^{\mathrm{alg}}\) restrict either to a coordinate on
the opposite factor or to \(r_0\) times a coordinate on the same
factor. In each case, the second derivative appearing in
\eqref{eq:torus-linearization} equals the negative of that coefficient.

Thus restriction and linearization contribute the factor
\(1/(2D_{\mathrm I})\) to the expressions
\eqref{eq:A-transverse}--\eqref{eq:B-transverse}. Substituting the four
axis pairs gives the negatives of \eqref{eq:OB-derivatives}.

When differentiating \(\cQ(O,B)\) under the diagonal action, the term
\(\cQ(O,\dot B)\) vanishes on \(\Tlift\). The reflection
\eqref{eq:reflection} keeps \(T_0\) totally geodesic for \(g_0+\tau O\),
so \(D_O(X,Y)\) is tangent to \(T_0\) for \(X,Y\in TT_0\).
As in Lemma~\ref{lem:pole-free}, \(\dd F_{1,O}=0\) on \(\Tlift\).
The tangential components of \(\dot B\) in the round circle coordinates
are constant, whence
\(\bigl(D_{\dot B}(X,Y)\bigr)^\top=0,\quad X,Y\in TT_0\).
The superscript \(\top\) denotes tangential projection.
The connection inner products and Schur term in
\eqref{eq:quadratic-connection-formula} therefore all vanish. This
shows that only the height functions need to be differentiated.

The verification treats \(\theta\) symbolically throughout and uses
the four exact common phases
\((\cos\varphi,\sin\varphi) \in\{(1,0),(0,1),(3/5,4/5),(4/5,3/5)\}\).
The axis variations are independent of the common phase. Moreover,
\(g_0\) and \(B\) are invariant under common left multiplication by
\(e^{i\varphi}\), while the height functions transform linearly in
\(\cos\varphi,\sin\varphi\). Thus the height-derivative expressions
are linear in these two functions. The four evaluations are also
injective on the larger space
\(\Span\{\cos\varphi,\sin\varphi,\cos3\varphi,\sin3\varphi\}\).
Indeed, for a polynomial with coefficients \(a_1,b_1,a_3,b_3\) in
this order, the first two zero evaluations give \(a_1=-a_3\),
\(b_1=b_3\). The remaining equations have coefficient matrix
\[
 \frac1{125}
 \begin{pmatrix}-192&144\\-144&192\end{pmatrix},
\]
which is nonsingular.
 Hence all four coefficients vanish.
The four rotations span the required transverse directions, and
reflection oddness gives zero value on \(\Tlift\). Thus
\(K_1^{\mathrm{alg}}\) satisfies condition~\eqref{eq:admissible-mixed} and the stated parities.

\subsection{Normal complements at the exceptional strata}
\label{app:normal-complements}

At the type-II representative \(p=1,q=j,u=i\), use the base curve
\[
 r(z)=\left(\frac{2z}{1+z^2},\,0,\,
                 \frac{1-z^2}{1+z^2},\,0\right)
\]
together with the eight graph variables of
\eqref{eq:quadratic-connection-formula}. The resulting nine-variable
Hessian \(\mathsf H_{\mathrm{II}}\) has kernel
\(\ker\mathsf H_{\mathrm{II}} =\Span\{\partial_{a_{E_k}}+\partial_{b_{F_k}}\}\).
This is the common-axis variation of \(u\) toward \(k\), with \(p,q\)
fixed. The rank calculation in Proposition~\ref{prop:clean-zero} shows that
there are no other kernel directions in this nine-dimensional slice.
Setting \(b_{F_k}=0\) is transverse to this kernel and selects an
eight-dimensional normal complement. Let \(A\) be the Hessian restricted
to that complement and \(r_{H_0}\) the corresponding differential of
the first unnormalized curvature coefficient.
The full nine
stationarity equations from \eqref{eq:schur-reduction} are checked
before restricting to the normal complement. The Schur term equals
\begin{equation}\label{eq:typeII-Schur}
 \tfrac12r_{H_0}^{T}A^{-1}r_{H_0}=\frac{5107}{151875}.
\end{equation}

The projection of \(\cZ_{\mathrm{II}}\) to \(M\) is the hypersurface
\(\langle p,q\rangle=0\). The \(z\)-direction supplies the one normal
direction missing from the fixed-base calculation.

At \(p=q=1\), use two base parameters
\[
 q(z_j,z_k)=
 \frac{1-z_j^2-z_k^2+2z_jj+2z_kk}{1+z_j^2+z_k^2}.
\]
Together with the eight graph variables these give ten variables. Their
Hessian \(\mathsf H_\Delta\) has kernel
\(\ker\mathsf H_\Delta =\Span\{\partial_{a_{E_j}}+\partial_{b_{F_j}},\, \partial_{a_{E_k}}+\partial_{b_{F_k}}\}\).
These are the two common-axis variations with the base point fixed.
Here the Hessian is restricted to the ten-dimensional slice.
The conditions \(b_{F_j}=b_{F_k}=0\) intersect
the kernel trivially and therefore give an eight-dimensional normal
complement.
In the order
\((a_{E_j},a_{E_k},a_{F_j},a_{F_k},
 b_{E_j},b_{E_k},b_{F_j},b_{F_k},z_j,z_k)\), the first-order minimizing displacement
for \(B+C\) is
\[
 \frac1{65}(-192,0,-64,0,64,0,0,0,0,-16).
\]
These coefficients solve the linear stationarity condition in
\eqref{eq:schur-reduction} on the specified complement and satisfy
all ten equations before restriction. Substitution of the metric
with correction \(K_2+K_3\) gives quadratic coefficient zero
and cubic coefficient \(115712/63375\).
The mixed quadratic term between the two base parameters is zero by
the circle isotropy rotating the \(j,k\)-plane of the background.

The same coefficient is obtained along the smooth curve
\[
 q(\tau)=\frac{1-a^2\tau^2}{1+a^2\tau^2}
            -\frac{2a\tau}{1+a^2\tau^2}k,\qquad a=\frac{16}{65},
\]
\[
 X_\tau=E_i+\frac{\tau}{65}(-96E_j-64F_j),\text{ and }
 Y_\tau=F_i+\frac{\tau}{65}(64E_j+96F_j).
\]
Composing the full spatial metric two-jets with this base curve
before applying the curvature recurrence gives the exact coefficients
\((0,0,0,115712/63375)\). The graph-stationarity derivatives vanish
as well. This is the smooth representative from
Lemma~\ref{lem:pole-free}, which extends across the diagonal and
antidiagonal.

\subsection{The generic cubic and the accompanying computations}\label{app:cubic-calculation}

For the generic cubic calculation, substitute
\eqref{eq:singular-displacement} into
\eqref{eq:cubic-reduction}, with \(h=B+C\), \(k=K_2+K_3\).
The zeroth, first, and second coefficients vanish exactly, and the
third is \eqref{eq:cubic-rational}. A separate use of
\eqref{eq:contracted-curvature} gives the same rational function
and checks the eight first-displacement stationarity equations.
Since the lower three coefficients vanish, dividing by the varying
Gram determinant divides the cubic coefficient by \(D_{\mathrm I}\).
The companion repository contains the two-jet definitions, curvature checks,
and a \texttt{README.md} mapping them to the paper.
After installing the pinned requirements, run
\texttt{python3 verification/verify.py} from the repository root.
Success means exit status zero and a final line beginning \texttt{VERIFIED:}.

Proposition~\ref{prop:mixed-existence} proves the properties of the
integral correction \eqref{eq:geometric-K1}; the code implements the
algebraic formula \eqref{eq:K1}. For either choice, the auxiliary bounds
and Poisson right-hand side are formed from the chosen tensor.

The exact programs verify the finite identities recorded in this appendix.
The contracted calculation uses the primary metric and tensor two-jets.
Separately differentiated coordinate calculations provide floating-point
consistency checks. The passage from these identities to positive curvature
on the whole Grassmann bundle is proved in
Sections~\ref{sec:background}--\ref{sec:positivity}.

\subsection{Coefficient selection and the remaining freedom}
\label{app:coefficient-choice}

We derive the coefficient relations used in Section~\ref{sec:tensors}
and give a numerical reconstruction of the background and height-parameter
choices. The amplitude estimates appear in
Sections~\ref{sec:amplitude-choice}--\ref{sec:complete-metric}.

\paragraph{The background parameter.}
For the family \eqref{eq:background-family}, we test
\(t\in\{1/10,1/5,3/10,2/5\}\) in increasing order and stop when
the reduced coefficient of \(H_0\) on type II becomes positive.
The full normal calculation at \(p=1,q=j,u=i\), with each corresponding
background, gives
\[
\begin{array}{c|ccc}
 t&1/10&1/5&3/10\\ \hline
 \cQ(H_0)|_{\cZ_{\mathrm{II}}}
 &-15488/16875&-7139/1080000&282877/2278125.
\end{array}
\]
This selects \(t=3/10\); the remaining conditions are those listed in
Section~\ref{sec:background-choice}.

\paragraph{The coefficient \(787/5880\).}
Let \(h\) be the global height tensor in the ansatz of
Section~\ref{sec:height-choice}, with the off-diagonal coefficient
\(a_0\) left free. Its tangential restriction to \(T_0\) is
\[
 h|_{TT_0}=\begin{pmatrix}
 \cos s&a_0(\cos s+\sin t)\\
 a_0(\cos s+\sin t)&\sin t
 \end{pmatrix}.
\]
The reflection fixing \(T_0\) preserves this metric path, so the torus
remains totally geodesic. Its first curvature variation vanishes, and
Codazzi shows that the differential of this variation in the plane
directions also vanishes. Together
with \(\cL h=0\) on the zero family, this gives zero first normal
displacement, initially on the regular locus and then by continuity.
Consequently its intrinsic quadratic coefficient is the reduced one.

In the coordinate frame \((\partial_s,\partial_t)\), the lower
connection variations on the torus are
\[
 \Gamma^{(1)}_{ss}=-\sin s\,(1/2,a_0)^T,\qquad
 \Gamma^{(1)}_{st}=0,\qquad
 \Gamma^{(1)}_{tt}=\cos t\,(a_0,1/2)^T.
\]
Since \(G_{\mathrm I}^{-1}=\frac1{30}
 \left(\begin{smallmatrix}29&19\\19&29\end{smallmatrix}\right)\),
the connection-variation formula \eqref{eq:connection-variation} gives
\begin{equation}\label{eq:height-free-coefficient}
 \cQ(h)|_{\Tlift}
 =-\frac{(\Gamma^{(1)}_{ss})^TG_{\mathrm I}^{-1}
                      \Gamma^{(1)}_{tt}}{D_{\mathrm I}}
 =\frac{76a_0^2+116a_0+19}{225}\sin s\cos t.
\end{equation}
For the two-coefficient correction in Section~\ref{sec:height-choice},
\eqref{eq:torus-linearization} gives
\(\cL K_0(\zeta_0,\zeta_1)|_{\Tlift} =\frac8{15}(\zeta_1-\zeta_0)\sin s\cos t\).
Thus the cancellation equation is
\begin{equation}\label{eq:K0-free-equation}
 \zeta_1-\zeta_0=-\frac{76a_0^2+116a_0+19}{120}.
\end{equation}
Its specialization at \(a_0=-2/7\) is \eqref{eq:K0-matching}.

To quantify the remaining freedom, keep the background and height
tensor fixed. Replacing
\((\zeta_0,\zeta_1)\) by
\((\zeta_0+\delta,\zeta_1+\delta)\) preserves
\eqref{eq:K0-free-equation}. For orthonormal height vectors \(e,f\)
and a supporting plane \(L\), direct use of
\eqref{eq:torus-linearization} shows that this changes the reduced
coefficient by
\[
 \frac{\delta}{D_{\mathrm I}}\cos\theta
       \langle e_{\parallel},f_{\parallel}\rangle
 =-\frac{8\delta}{15}\cos\theta
       \langle e_\perp,f_\perp\rangle,
\]
where the parallel projections are onto \(L\).
Its absolute value is at most \(4|\delta|\rho/15\).
By \eqref{eq:odd-lower-bound}, the new lower bound is at least
\((1/250-4|\delta|/15)\rho\). Requiring this to be at least
\(\rho/500\) gives \(|\delta|\leq3/400\). The chosen construction
corresponds to \(\delta=0\).

\paragraph{A reproducible numerical selection.}
Let \(O_{a_0}\) denote the height ansatz in Section~\ref{sec:height-choice}.
For each pair \((a_0,\zeta_0)\), determine \(\zeta_1\) by
\eqref{eq:K0-free-equation}. After diagonal \(SO(4)\)-normalization,
write the corrected coefficient for orthonormal height vectors \(e,f\) as
\[
 s_{a_0}(\theta)\rho
 +c_{a_0,\zeta_0}(\theta)\langle e_\perp,f_\perp\rangle
 +j_{a_0}(\theta)\langle Je_\perp,f_\perp\rangle.
\]
The scalar functions \(s,c,j\) are obtained from the frame calculus and
normal minimization; they are rational in \(z=\tan(\theta/2)\).
Since \(e_\perp\) and \(Je_\perp\) are orthogonal with equal length,
a lower bound for the coefficient of \(\rho\) is
\[
 s_{a_0}(\theta)-\tfrac12
       \sqrt{c_{a_0,\zeta_0}(\theta)^2+j_{a_0}(\theta)^2}.
\]
Their symmetries reduce the angular range to \([0,\pi/2]\).
Evaluate the minimum of this bound at \(513\) equally spaced angular
nodes, for an \(81\)-by-\(81\) uniform grid in \((a_0,\zeta_0)\in[-1,1]^2\).
Refine the best pair in the eight coordinate and diagonal directions,
starting with step \(0.025\). Retain improvements exceeding \(10^{-12}\),
halve the step when none occurs, and stop at step at most \(10^{-6}\),
keeping all candidates in the same box. This gives approximately
\[
 (a_0,\zeta_0)=(-0.2805573,-0.1875),\qquad
 \text{sampled margin}=0.00577854.
\]
For denominator bounds \(2,3,\ldots,32\), replace each coordinate by
its closest rational approximation with that denominator bound. The first
pair retaining at least three quarters of the sampled margin occurs at
bound seven and is \((-2/7,-1/5)\), with sampled margin approximately
\(0.00547401\). Equation~\eqref{eq:K0-free-equation} then gives
\(\zeta_1=-787/5880\).

For the resulting rational pair, the identity \eqref{eq:odd-identity}
and the estimate \eqref{eq:height-numerical-margin} give
\[
 \frac{8004517}{1778112000}>\frac1{250}
\]
uniformly for all angles, including the exceptional ones by smoothness.
The command \texttt{python3 verification/parameters.py} in the companion
repository reproduces this selection and verifies the rational identities.

\paragraph{The relative coefficient \(3/13\) in \(B\).}
Let \(B_{b_B}\) be \eqref{eq:B-tensor} with \(3/13\) replaced by \(b_B\).
It still satisfies \(\cL B_{b_B}=0\). At the two regular type-I
representatives \(p=1\), \(q=\cos\theta+i\sin\theta\), let
\(Q_1(b_B)\) and \(Q_2(b_B)\) denote its reduced quadratic coefficient
at \((\cos\theta,\sin\theta)=(0,1)\) and \((3/5,4/5)\), respectively.
Substitution in \eqref{eq:quadratic-connection-formula} gives
\begin{equation}\label{eq:B-selection-values}
\begin{aligned}
 Q_1(b_B)&=-\frac{1531b_B^2-2282b_B+1051}{4000},\\
 Q_2(b_B)&=-\frac{73017367b_B^2-37874994b_B+6457527}{37856250}.
\end{aligned}
\end{equation}
At both representatives, the eight plane directions form a normal
complement and the background Gram determinant is \(15/8\).
The formulas follow from the frame recurrences
\eqref{eq:koszul-engine}--\eqref{eq:curvature-engine} and the Schur
subtraction in \eqref{eq:quadratic-connection-formula}.
The dependence on \(b_B\) is quadratic.
Subtracting the polynomials in \eqref{eq:B-selection-values} gives
\begin{equation}\label{eq:B-selection-equation}
 Q_2(b_B)-\frac7{25}Q_1(b_B)
 =-\frac{6528787}{605700000}(13b_B-3)^2.
\end{equation}
Proportionality to \(\cos2\theta\) forces the left-hand side to vanish,
hence \(b_B=3/13\). Equation~\eqref{eq:BB-cancel} gives the resulting
identity for all angles.

\paragraph{Coefficients in the optional algebraic correction.}
The coefficients in \eqref{eq:A-local}--\eqref{eq:K1} can be found by
matching the four transverse derivatives in \eqref{eq:OB-derivatives}.
Replace \(13/7\) in \(W_\ell^\pm\) by
\(h\), and replace the coefficient \(-4/7\) of
\(x_3T_\ell\), respectively \(x_2T_\ell\), by \(d\).
Denote the resulting local tensors by \(A_{\ell;h,d}\) and
\(B_{\ell;h,d}\), and use the ansatz
\[
 \omega_1\{A_{\beta;h,d}(p)+B_{\alpha;h,d}(q)\}
 +\omega_2 r_0\{B_{\alpha;h,d}(p)+A_{\beta;h,d}(q)\}.
\]
Its parities hold for all coefficients. In the transverse derivatives
\eqref{eq:A-transverse}--\eqref{eq:B-transverse},
replace \(13/7\) by \(h\) and \(-12/7\) by \(3d\); the latter
factor occurs because \(T_\ell(T,T)=3\) on \(T_0\).
Matching the four required derivatives in \eqref{eq:OB-derivatives}
amounts to
\begin{equation}\label{eq:K1-coefficient-system}
\begin{aligned}
 \omega_1(1+h)&=\frac{44}{455},&
 \omega_1\left(1-\frac34d\right)&=\frac{22}{455},\\
 \omega_2(1+h)&=\frac{24}{91},&
 \omega_2\left(1-\frac34d\right)&=\frac{12}{91}.
\end{aligned}
\end{equation}
One obtains these equations by inserting the axis pairs
\((k,-k),(-j,j),(j,j),(k,k)\) into
\eqref{eq:A-transverse}--\eqref{eq:B-transverse} and equating the coefficients of
\(\cos\varphi,\sin\varphi\); the circle-coordinate second derivative
contributes the factor \(1/(2D_{\mathrm I})\) from \eqref{eq:torus-linearization}.
They are linear in the six products \(\omega_1\), \(\omega_1h\), \(\omega_1d\), \(\omega_2\), \(\omega_2h\), and \(\omega_2d\).
Solving \eqref{eq:K1-coefficient-system} leaves the one-parameter freedom
\[
 h=1-\frac32d,\qquad
 \omega_1=\frac{22}{455(1-3d/4)},\qquad
 \omega_2=\frac{12}{91(1-3d/4)},\qquad d\ne\frac43.
\]
For \(d=-4/7\), we obtain \(h=13/7\), \(\omega_1=11/325\), and
\(\omega_2=6/65\), as in \eqref{eq:K1}.